\theoremstyle{definition}
\newtheorem{theorem}{Theorem}[section]
\newtheorem{example}[theorem]{Example}
\newtheorem{prop}{Proposition}[section]
\newtheorem{theo}{Theorem}[section]
\newtheorem{lemm}{Lemma}[section]
\newtheorem{coro}{Corollary}[section]
\newtheorem{rema}{Remark}[section]
\newtheorem{defi}{Definition}[section]
\newtheorem{pro}{Problem}[section]
\newcommand{\beq}{\begin{equation}}
\newcommand{\eeq}{\end{equation}}
\newcommand{\beqs}{\begin{eqnarray*}}
\newcommand{\eeqs}{\end{eqnarray*}}
\newcommand{\beqn}{\begin{eqnarray}}
\newcommand{\eeqn}{\end{eqnarray}}
\newcommand{\beqa}{\begin{array}}
\newcommand{\eeqa}{\end{array}}
\newtheorem*{remark*}{Remark}
\numberwithin{equation}{section}
\def\begeq{\begin{equation}}
\def\endeq{\end{equation}}
\def\S{\mathbb S}
\def\p{\partial}
\def\Om{\Omega}
\def\D{\nabla}
\def\S{\mathbb S}
\def\F{\mathcal F}
\def\Om{\Omega}
\def\D{\nabla}
\title{Limiting shape of the $L_p$-Minkowski problem}
\author{Shi-Zhong Du}
\address{The Department of Mathematics,
            Shantou University, Shantou, 515063, P. R. China.}
\email{szdu@stu.edu.cn}
\author{Xu-Jia Wang}
\address{Institute for Theoretical Sciences, Westlake University,
Hangzhou 310024,  China.}
\email{wangxujia@westlake.edu.cn}
\author{Bao-Cheng Zhu}
\address{School of Mathematics and Statistics, Shaanxi Normal University, Xi'an, 710119, China}
\email{bczhu@snnu.edu.cn}
\thanks{The first author was  supported by NSFC (12171299);
the second author was supported by the start-up fund from Westlake University;
the third author was supported by NSFC (12371060) and the Sydney Mathematical Research Institute at USYD}
\subjclass[2010]{35J20; 35J60; 52A40; 53A15}
\keywords{$L_p$  Minkowski problem, limiting shape, polytope.}
\begin{document}

\begin{abstract}
In \cite{A03}, Andrews classified the limiting shape for isotropic curvature flow corresponding to the solutions of the $L_p$-Minkowski problem as $p\to-\infty$ in the planar case. In this paper, we use the group-invariant method to study the asymptotic shape of solutions to the $L_p$-Minkowski problem as $p\to-\infty$ in high dimensions. For any regular polytope $T$, we establish the existence of a solution ${\Omega^{(p)}}$ to the $L_p$-Minkowski problem that converges to $T$ as $p\to-\infty$, thereby revealing the intricate geometric structure underlying this limiting behavior.
We also extend the result to the dual Minkowski problem.
\end{abstract}

\maketitle\markboth{Limiting shape of the $L_p$-Minkowski problem}
{Limiting shape of the $L_p$-Minkowski problem}



\baselineskip16.8pt
\parskip5pt

\section{Introduction}

The classical Minkowski problem looks for a convex body such that its surface measure matches a given Radon measure $\mu$ on the unit sphere ${\mathbb{S}}^{n}$ in $\mathbb{R}^{n+1}$.
This problem dates back to the early works of Minkowski, Aleksandrov and Fenchel-Jessen (see \cite{Sch14}).
For a convex body $\Omega\in{\mathcal{K}}_0$  (the set of convex bodies containing the origin) and two parameters $p,q\in{\mathbb{R}}$,
Lutwak,Yang and Zhang \cite{LYZ18} introduced the measure
$$
\mathcal{C}_{p,q}(\Omega,\omega)=\int_{G^{-1}(\omega)}\frac{|y|^{q-n-1}}{h^{p-1}(G(y))}
d{\mathcal{H}}^{n},$$
for any measurable set $\omega\subset {\mathbb{S}}^{n}$,
and proposed  the following dual Minkowski problem.
Given a finite Radon measure $\mu$ on ${\mathbb{S}}^n$,
find a convex body $\Omega\in{\mathcal{K}}_0$ such that $d\mathcal{C}_{p,q}(\Omega,\cdot)=d\mu$.
 Here, $G: \partial \Omega \rightarrow {\mathbb{S}}^n$ is the Gauss map, $h: {\mathbb{S}}^{n}\to{\mathbb{R}}$ is the support function of $\Om\in \mathcal K_0$, and $\mathcal{H}^n$ is the $n$-dimensional Hausdorff measure.
In the smooth category, the measure $\mu$ is given by $d\mu=fd\sigma$ for a   positive function $f$, where $d\sigma$ is the spherical Lebesgue measure,
the $L_p$ dual Minkowski problem can then be formulated by the equation
  \begin{equation}\label{e1.1}
    \det(\nabla^2h+hI)=fh^{p-1}(h^2+|\nabla h|^2)^{\frac{n+1-q}{2}}, \ \ \forall\ x\in{\mathbb{S}}^{n}.
  \end{equation}
This is a fully nonlinear equation of Monge-Amp\`{e}re type,
which has received increasing attention in recent years (see e.g. \cite{BF19,CHZ,HLYZ,HZ2018,LYZ18,Zhao17,Zhao18}).
When $q=n+1$, equation \eqref{e1.1} reduces to the $L_p$-Minkowski problem, introduced in \cite{Lut93}, given by
    \begin{equation}\label{e1.2}
     \det(\nabla^2h+hI)=fh^{p-1}, \ \ \forall\ x\in{\mathbb{S}}^{n}.
    \end{equation}
Here, $\nabla$ denotes the standard Levi-Civita connection on the sphere ${\mathbb{S}}^{n}$, and $I$ is the identity matrix.

The $L_p$-Minkowski problem has been extensively studied in recent years (see e.g. \cite{A99,A00,BBCY,BLYZ13,BIS19,Du21,HLW16,HLX15,HLYZ05,IM24,LW13,LYZ04,M24a,M24b,Zhu15,Zhu17}).
According to the Blaschke-Santal\'{o} inequality,
the problem is divided into three cases, the sub-critical case $p>-n-1$,
the critical case $p=-n-1$, and the super-critical case $p<-n-1$.
There are rich phenomena on the existence and multiplicity of solutions.
In the sub-critical case $p\in(-n-1,0]$, the solution is usually not unique \cite{JLW15}.
When $p<-n$, there may be infinitely many solutions \cite {Li19}.
In the critical case $p=-n-1$, due to a Kazdan-Warner type obstruction \cite{CW06},
there may be no solutions to the problem \eqref{e1.2}.
In the super-critical case $p<-n-1$,
surprisingly, Guang, Li and the second author \cite{GLW2024} proved that
for any $p<-n-1$ and any positive, smooth function $f$, there is a solution to \eqref{e1.2}.
An amazing result was obtained by Andrews \cite{A03}.
He proved that in the planar case $n=1$,
for any regular polygon $T_k$ (with $k$-sides),
there is a solution to \eqref{e1.2} which converges to $T_k$, as $p$ tends to negative infinity. This raises a natural question of whether such limiting behavior holds in high dimensions:

\begin{pro}\label{problem}
(i) Does there exist a solution ${h^{(p)}}$ to the $L_p$-Minkowski problem \eqref{e1.2}, such that the associated convex body ${\Omega^{(p)}}$ converges to a polytope $T$, which is tangential to ${\mathbb{S}}^{n}$, as $p\to -\infty$?\\
(ii) Moreover, for any regular polytope $T$ tangent to the unit sphere $ {\mathbb{S}}^{n}$,
does there exist a solution ${h^{(p)}}$ to \eqref{e1.2}   for negative large $p$,
such that the associated convex body ${\Omega^{(p)}}$ converges to the polytope $T$
as $p\to-\infty$?
\end{pro}

In fact, Andrews classified the limiting shape of the solutions to \eqref{e1.2} in the planar case by solving an isotropic curvature flow.
In this paper, we explore a new method, called the group-invariant method, to study the limiting shape of the solutions to \eqref{e1.2} in high dimensions. To do so, we assume that,
throughout the paper, $f$ satisfies the following conditions:
\begin{itemize}
\item [(i)] $f$ is a measurable function satisfying
\beq\label{fcc}
c_1 \le  f \le c_2
\eeq
for positive constants $c_1, c_2$.

\item [(ii)] $f$ is invariant with respect to a group ${\mathcal{S}}$,
which is a discrete subgroup of $O(n+1)$ satisfying the spanning property  (introduced in Section 2).
\end{itemize}
A particular case is when $\mathcal S$ is the discrete subgroup of $O(n+1)$
generated by a regular polytope.

First, we obtain the asymptotic behavior of solutions to the $L_p$-Minkowski problem \eqref{e1.2}
for negative large $p$ in high dimensions, which solves part (i) of Problem \ref{problem}.

\begin{theo}\label{t1.1}
Let ${\mathcal{S}}$ be a discrete subgroup of $O(n+1)$ satisfies the spanning property.
Then for $p<-n-1$, there exists an ${\mathcal{S}}$-invariant solution ${h^{(p)}}$ to the $L_p$-Minkowski problem \eqref{e1.2}, such that the associated convex body ${\Omega^{(p)}}$ sub-converges to a group invariant polytope $T$, which is tangential to ${\mathbb{S}}^{n}$, as $p\to -\infty$.
\end{theo}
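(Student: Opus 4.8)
The plan is to obtain ${h^{(p)}}$ variationally among ${\mathcal{S}}$-invariant support functions, then extract a limit and identify it as a polytope tangent to ${\mathbb{S}}^n$. Concretely, for $p<-n-1$ one minimizes (or finds a critical point of) the functional
\[
\mathcal{E}_p(h)=\vol(\Omega_h)-\frac{1}{p}\int_{{\mathbb{S}}^n} f\, h^{p}\, d\sigma
\]
over the class of ${\mathcal{S}}$-invariant support functions $h>0$ on ${\mathbb{S}}^n$, where $\Omega_h$ is the Wulff shape of $h$. Since $p<0$, the term $-\frac1p\int f h^p$ blows up as $h\to 0^+$ on a set of positive measure, which is precisely what forces the minimizer to stay positive; the existence of an ${\mathcal{S}}$-invariant critical point then follows as in the super-critical existence theory of \cite{GLW2024} (the group-invariance is preserved because $f$ is ${\mathcal{S}}$-invariant, so one may restrict the variational problem to the ${\mathcal{S}}$-invariant subspace and the Euler--Lagrange equation is still \eqref{e1.2} by the principle of symmetric criticality). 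One normalizes, say, by a scaling so that $\max_{{\mathbb{S}}^n} {h^{(p)}} = 1$, or equivalently that $\Omega^{(p)}$ is inscribed in a fixed ball; the precise normalization is chosen to make the limiting polytope tangent to ${\mathbb{S}}^n$.

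Next I would derive uniform geometric bounds. From $c_1\le f\le c_2$ and the equation \eqref{e1.2}, integrating against $1$ gives control of $\int f h^{p-1}\,d\mu_{\Omega^{(p)}}$-type quantities; combined with the normalization this yields uniform upper and lower bounds on the inradius and circumradius of $\Omega^{(p)}$, so the bodies $\Omega^{(p)}$ lie in a compact family and (by Blaschke selection) a subsequence converges in Hausdorff distance to some convex body $T$ with $\mathbf{0}\in\Int T$. The spanning property of ${\mathcal{S}}$ guarantees $T$ is genuinely $(n+1)$-dimensional and ${\mathcal{S}}$-invariant. The crucial point is to show $T$ is a polytope tangent to ${\mathbb{S}}^n$: on any open spherical cap where the support function of $T$ is strictly larger than the tangency value $1$, the factor $h^{p}$ with $p\to-\infty$ makes the measure $f h^{p-1}\,d\sigma$ concentrate (its mass there tends to $0$ relative to the mass where $h$ is smallest), forcing the limiting surface-area measure of $T$ to be supported on the finite ${\mathcal{S}}$-orbit of directions where $h_T$ attains its minimum value $1$; a convex body whose surface-area measure is a finite sum of point masses is a polytope, and the facets touch ${\mathbb{S}}^n$ exactly at those minimizing directions.

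The main obstacle is the last step: passing from the PDE \eqref{e1.2} for ${h^{(p)}}$ to a statement about the weak (Aleksandrov) surface-area measure of the Hausdorff limit $T$, and in particular ruling out that mass escapes or that $T$ degenerates to a lower-dimensional set or fails to be tangent to the sphere. This requires (a) a quantitative lower bound showing $\min_{{\mathbb{S}}^n} h^{(p)}$ stays bounded away from $0$ uniformly in $p$ — here the ${\mathcal{S}}$-invariance and spanning property are essential, since they prevent the body from becoming a thin slab — and (b) a careful Laplace-type asymptotic analysis of $\int f (h^{(p)})^{p-1}\varphi\,d\sigma$ as $p\to-\infty$ to show concentration exactly on the min-locus. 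Once the min value is shown to equal the tangency radius (which is fixed by the chosen normalization together with the volume normalization forced by the equation), the identification of $T$ with an ${\mathcal{S}}$-invariant polytope tangent to ${\mathbb{S}}^n$ follows, completing part (i) of Problem \ref{problem}.
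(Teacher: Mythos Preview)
Your overall strategy (variational construction, uniform bounds, Blaschke selection) is in line with the paper. The genuine gap is in the last step, where you try to show the Hausdorff limit $T$ is a polytope by arguing that the surface-area measure concentrates. Your Laplace-type reasoning does show that the surface-area measure of $T$ is supported on the min-locus $E=\{x:h_T(x)=\min h_T\}$, but your assertion that $E$ is a \emph{finite} $\mathcal{S}$-orbit is unjustified. Nothing so far forces $E$ to be finite; a convex body whose surface-area measure is supported on an infinite closed set (for instance a solid cylinder in $\mathbb{R}^3$, where the measure lives on an equatorial circle plus two poles) need not be a polytope. The spanning property rules out that particular example, but it does not by itself make $E$ discrete, so the argument does not close. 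A second, smaller issue: you cannot freely normalize $\max h^{(p)}=1$, since \eqref{e1.2} has no scaling invariance; the paper fixes the scale through the constraint $\int_{\mathbb{S}^n} f h^p=\int_{\mathbb{S}^n} f$ and obtains $\min h^{(p)}\to 1$ by evaluating \eqref{e1.2} at the minimum point (upper bound) together with an integral lower bound coming from the bounded-ratio condition equivalent to the spanning property.

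The paper bypasses the concentration analysis entirely by a variational characterization of the limit. One shows that $\Omega^{(-\infty)}$ is a maximizer of the scale-invariant functional $\mathcal{F}_{-\infty}(\Omega)=V(\Omega)/(\min_{\mathbb{S}^n}h_\Omega)^{n+1}$ over $\mathcal{K}_0(\mathcal{S})$. If a maximizer $\Omega$ were not a polytope, pick $x_0\in\partial\Omega$ on the inscribed sphere, take the supporting half-space $H^-(x_0)$ there, and form $P_{x_0}=\bigcap_{\phi\in\mathcal{S}}\phi(H^-(x_0))$. Since $\mathcal{S}$ is finite, $P_{x_0}$ is an $\mathcal{S}$-invariant polytope; it strictly contains $\Omega$ yet has the same inscribed ball, hence $\mathcal{F}_{-\infty}(P_{x_0})>\mathcal{F}_{-\infty}(\Omega)$, a contradiction. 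This circumscription trick is the missing idea and replaces your surface-area concentration argument.
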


We say that a polytope $T$ is tangential to the unit sphere ${\mathbb{S}}^{n}$
if every face of $T$ is tangential to ${\mathbb{S}}^{n}$.
It is easy to see that if $T$ is tangential to ${\mathbb{S}}^{n}$,
then $T$ is convex and $T$ contains the unit ball  ${B_1}$.

By the regularity theory of Caffarelli \cite{caffarelli1990localization,caffarelli1990interior},
the solution $h^{(p)}$ is $C^{1,\alpha}$ if $f$ is positive and bounded,
and is $C^{2,\alpha}$ if $f$ is  also H\"{o}lder continuous.
By the maximum principle, there is a unique solution $h^{(p)}$ to \eqref{e1.2} when $p>n+1$ \cite{CW06}.
It is easy to see that the solution $h^{(p)}$ tends to constant one when $p$ tends to infinity.
In fact,  at the maximum point of $h^{(p)}$, we have
$$ h^{p-n-1}\le \det (\D^2 h+hI)/(fh^n)\le 1/f,\ \ h=h^{(p)}$$
by equation \eqref{e1.2}. Hence $\lim_{p\to+\infty} \sup h^{(p)}=1$.
Similarly, one can show that $\lim_{p\to+\infty} \inf h^{(p)}=1$ by evaluating \eqref{e1.2} at the minimum point of $h^{(p)}$.
We conclude that the limiting shape of the solutions $\Omega^{(p)}$ is the unit ball  ${B_1}$ as $p\to+\infty$.

Theorem \ref{t1.1} deals with the case $p<-n-1$.
When $f\equiv1$, there is  a trivial solution $h\equiv 1$ to \eqref{e1.2}.
Theorem \ref{t1.1} tells that there is a non-trivial solution $h^{(p)}$ when $p$ is negative large.
The solution $h^{(p)}$ is a maximizer of the functional
 \begin{equation}\label{f1.3}
 \F_{p}(\Omega) = V(\Omega)\Big(\int_{{\mathbb{S}}^{n}}fh^p{\Big/}\int_{{\mathbb{S}}^{n}}f\Big)^{-(n+1)/p}
 \end{equation}
 among all $\mathcal{S}$-invariant convex bodies $\Om$ containing the origin,
 where $h=h_\Om$ is the support function of $\Om$,
 $$V(\Om):= \int_{{\mathbb{S}}^n}h\det(\nabla^2h+h I). $$
 Here the value $V(\Om)$ is equal to $(n+1)$ times the volume $|\Om|$.
 The spanning property of the group $\mathcal S$
guarantees the uniform estimate of group invariant solutions,
and one can obtain the existence of solutions for all $p\ne n+1$.
By property of the functional $\F_{p}$, we show that the maximizer $h^{(p)}$ is not a constant when $p$ is negative large, and converges to a limit as $p$ tends to negative infinity. Moreover, the limit is a polytope.

The next theorem shows that every regular polytope tangent to the unit sphere
is a limit polytope, thereby resolving part (ii) of Problem \ref{problem}.

\begin{theo}\label{t1.2}
For any regular polytope $T$ tangential to the unit sphere $ {\mathbb{S}}^{n}$,
there exists solution ${h^{(p)}}$ to \eqref{e1.2}   for negative large $p$,
such that the associated convex body ${\Omega^{(p)}}$ converges to the polytope $T$
as $p\to-\infty$.
\end{theo}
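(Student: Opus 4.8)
\emph{Plan.}
The plan is to run the variational scheme behind Theorem~\ref{t1.1} with the group $\mathcal S$ taken to be the symmetry group $\mathcal S_T$ of $T$, but maximizing $\F_p$ only over $\mathcal S_T$-invariant convex bodies lying in a fixed small Hausdorff neighbourhood of $T$. A purely global maximization cannot work for every $T$ (for instance the symmetry group of a cross-polytope also fixes the dual cube, which has strictly larger limiting $\F$-value, so the global maximizer would run to the cube); thus the point is that $T$ is a \emph{strict local} maximizer of the limiting functional. Here $T$ is normalized to be tangential to $\S^n$, so $B_1\subseteq T$, each facet touches $\partial B_1$ at its centre, $\min_{\S^n}h_T=1$, and $\{x\in\S^n:h_T(x)=1\}$ is exactly the set of facet normals of $T$. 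The group $\mathcal S_T$ is a discrete subgroup of $O(n+1)$ of the type generated by a regular polytope, hence has the spanning property; choose any $\mathcal S_T$-invariant $f$ with $c_1\le f\le c_2$ (for instance $f\equiv1$). Exactly as in the proof of Theorem~\ref{t1.1}, by the principle of symmetric criticality and Caffarelli's regularity every convex body that is a critical point of $\F_p$ among $\mathcal S_T$-invariant convex bodies yields a solution $h^{(p)}$ of \eqref{e1.2} with associated convex body tangential to $\S^n$ in the limit (the scaling normalization that turns an Euler--Lagrange critical point into an honest solution of \eqref{e1.2} tends to the identity as $p\to-\infty$, and is handled exactly as in Theorem~\ref{t1.1}).

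Next I would identify the limit of $\F_p$. Since $\big(\int_{\S^n}fh^p/\int_{\S^n}f\big)^{1/p}\to\min_{\S^n}h$ for $h$ bounded below, one obtains
\[
\F_p(\Om)\ \longrightarrow\ \F_\infty(\Om):=V(\Om)\big(\min_{\S^n}h_\Om\big)^{-(n+1)}=(n+1)\,\frac{|\Om|}{(\min_{\S^n}h_\Om)^{n+1}},
\]
and, crucially, this convergence is continuous: $\Om_p\to\Om$ in Hausdorff distance forces $\F_p(\Om_p)\to\F_\infty(\Om)$ (the weight $f$ drops out in the limit). As $\F_\infty$ is dilation invariant, in analysing it one may normalize $\min_{\S^n}h_\Om=1$, i.e.\ $B_1\subseteq\Om$ with $\partial\Om\cap\partial B_1\neq\emptyset$.

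The geometric heart — and the step I expect to be the main obstacle — is to show that $T$ strictly locally maximizes $\F_\infty$ among $\mathcal S_T$-invariant convex bodies. For a unit vector $u$, write $P_{\mathcal S_T u}=\{x:\langle x,gu\rangle\le1\ \text{for all }g\in\mathcal S_T\}$ for the polytope circumscribed about $B_1$ with facet normals $\mathcal S_T u$, and let $u_1$ be a facet normal of $T$, so $P_{\mathcal S_T u_1}=T$. If $\Om$ is $\mathcal S_T$-invariant, close to $T$, and normalized so $\min h_\Om=1$, pick a contact point $x_0$ with $h_\Om(x_0)=1$; since the contact set of $T$ is its set of facet normals and $\Om$ is near $T$, an element of $\mathcal S_T$ brings $x_0$ near $u_1$. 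Invariance of $h_\Om$ gives $h_\Om\equiv1$ on $\mathcal S_T x_0$, hence $\Om\subseteq P_{\mathcal S_T x_0}$ and
\[
|\Om|\ \le\ |P_{\mathcal S_T x_0}|\ \le\ |T|,
\]
with equality throughout iff $\Om=T$. The second inequality, i.e.\ the strict local maximality of $u\mapsto|P_{\mathcal S_T u}|$ at $u_1$, is the technical core; equivalently, since $(n+1)|P_{\mathcal S_T u}|$ is the total surface area of $P_{\mathcal S_T u}$, the regular configuration locally maximizes the surface area of the circumscribed orbit-polytope. For $n=1$ this is Andrews' computation in \cite{A03}: the area of a polygon circumscribed about the circle with contact arcs $\Delta_i$ is $\sum_i\tan(\Delta_i/2)$, and perturbing $u$ splits each contact arc of $T$, so by convexity of $\tan$ the area drops linearly, $|P_{\mathcal S_T u}|=|T|-c\,\dist(u,\mathcal S_T u_1)+o\big(\dist(u,\mathcal S_T u_1)\big)$ with $c>0$. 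In general the stabilizer of $u_1$ in $\mathcal S_T$ is the symmetry group of the facet of $T$ (itself a regular polytope), which has no nonzero fixed vector; moving $u$ off $u_1$ therefore splits that facet into smaller facets, bevelling it near its boundary, and a first-variation-of-volume estimate of this bevelling should again give a strictly linear decrease, uniform in the direction of the perturbation. Finally, polytopes $P_{\mathcal U}$ with $\mathcal U$ a union of two or more $\mathcal S_T$-orbits near $\mathcal S_T u_1$ are contained in a single-orbit one, hence strictly smaller, so they do not disturb the local maximality. Altogether, on a sufficiently small closed Hausdorff neighbourhood $\overline N$ of $T$ in the $\mathcal S_T$-invariant convex bodies, $T$ is the unique maximizer of $\F_\infty$ and $\F_\infty(T)>\sup_{\partial N}\F_\infty$.

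To conclude, for $p<-n-1$ let $\Om^{(p)}$ maximize $\F_p$ over $\overline N$ (a maximizer exists by Blaschke selection and continuity of $\F_p$). By the continuous convergence $\F_p\to\F_\infty$, compactness of $\overline N$, and the strict local maximality of $T$,
\[
\F_p(\Om^{(p)})\ \ge\ \F_p(T)\ \longrightarrow\ \F_\infty(T)\ >\ \sup_{\partial N}\F_\infty\ =\ \lim_{p\to-\infty}\ \sup_{\partial N}\F_p,
\]
so $\Om^{(p)}$ lies in the interior of $\overline N$ once $-p$ is large; being an interior maximizer it is a critical point of $\F_p$ among $\mathcal S_T$-invariant convex bodies, hence gives a solution $h^{(p)}$ of \eqref{e1.2}. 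Any Hausdorff sub-limit of $\Om^{(p)}$ as $p\to-\infty$ maximizes $\F_\infty$ over $\overline N$, hence equals $T$; therefore $\Om^{(p)}\to T$, which proves the theorem. The only step that is not essentially routine is the strict local maximality of the orbit-polytope volume at the regular configuration, which for $n\ge 2$ requires the first- and second-variation analysis of the facet bevelling sketched above.
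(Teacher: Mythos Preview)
Your proposal is correct and follows essentially the same route as the paper's proof in Section~5: restrict to $\mathcal S_T$-invariant bodies in a small Hausdorff neighbourhood $\mathcal N_\delta(T)$, maximize the volume (equivalently $\mathcal F_p$) there, use the continuous convergence $\mathcal F_p\to\mathcal F_{-\infty}$ together with compactness to show the maximizer stays interior and hence solves \eqref{e1.2}, and conclude by uniqueness of the local maximizer of $\mathcal F_{-\infty}$. Your reduction of the strict local maximality of $T$ to the inequality $|P_{\mathcal S_T u}|<|T|$ for $u$ near but not equal to a facet normal $u_1$ is exactly the content of the paper's Lemma~\ref{l5.1} (via \eqref{e5.5}--\eqref{e5.6}).

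The one substantive difference is how that inequality is established. Rather than the bevelling picture you sketch, the paper subdivides $T$ via the flag chain of centroids $O_{n+1},O_n,\ldots,O_0$ into congruent fundamental simplices $\Omega_n\subset\mathbb R^n_+$; by symmetry the perturbed contact point $\overline w_0$ may be taken in $\Omega_n$, and the volume comparison reduces to positivity of the explicit integral $\overline V(\overline w_0)$ in \eqref{e5.4} over that single simplex. Differentiating along a ray $\overline w_0=ra$ gives
\[
\frac{d}{dr}\bigg|_{r=0}V(r,a)=(n+1)\int_{\Omega_n}\langle\bar z,a\rangle\,d\bar z>0,
\]
since both $\Omega_n$ and the admissible direction $a$ lie in the positive orthant $\mathbb R^n_+$. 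Thus the first variation is already strictly positive in every allowed direction, and the second-variation analysis you anticipate is not needed.
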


A regular polytope has strong symmetry. We prove Theorem \ref{t1.2} by considering maximizers of the functional $\F_{p}$
among convex bodies with the strong symmetry.
Theorems \ref{t1.1} and \ref{t1.2}
can be regarded as a higher dimensional extension of
Andrews's result in (\cite{A03}, Theorem 1.5) in the planar case.
Here we allow $f$ to be a measurable function.

Next we consider the dual Minkowski problem \eqref{e1.1}.
Under the group invariant assumptions in Theorem \ref{t1.1},
we have uniform estimate for the group invariant solution $h^{(p, q)}$ to \eqref{e1.1},
which is a maximizer of the functional
\begin{equation}\label{f1.4}
 \F_{p,q}(\Omega) = V_q(\Omega)\bigg(\int_{{\mathbb{S}}^{n}}fh^p{\Big/}\int_{{\mathbb{S}}^{n}}f\bigg)^{-q/p}
 \end{equation}
 among all $\mathcal{S}$-invariant convex bodies $\Om\subset \mathcal K_0(\mathcal{S})$,
where
$$V_q(\Om)=\int_{\S^n}   r^q(y) dy , $$
{ and $V_{n+1}(\Om)=V(\Om)$.}

Here we are interested in the asymptotic behaviour of the solution $h^{(p, q)}$ as $p$ tends to negative infinity for fixed $q$, and the asymptotic behaviour as $q$ tends to positive infinity for fixed $p$.

\begin{theo}\label{t1.3}
Let $\mathcal{S}$ be as in Theorem \ref{t1.1}.
Let $h^{(p, q)}$ be the maximizer of the functional $\F_{p,q}$ and $\Om^{(p, q)}$ be the associated convex body.

\begin{itemize}

\item [(1)]
For any given $q>0$,
$\Om^{(p, q)}$  sub-converges to a group invariant polytope $T$,
which contains the unit ball ${B_1}$ and is tangential to $\S^n$,  as $p\to-\infty$.

\item [(2)]
For any given $q<0$,
$\Om^{(p, q)}$ converges to the unit ball as $p\to-\infty$.

\item [(3)]
For any given $p\neq 0$, $\Om^{(p, q)}$  sub-converges to a group invariant polytope $T$, which contains the unit ball ${B_1}$ and is tangential to $\S^n$, as $q\to+\infty$.

\end{itemize}
\end{theo}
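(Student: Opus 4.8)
The plan is to treat all three parts by the same strategy that must underlie Theorems \ref{t1.1} and \ref{t1.2}: establish a priori two-sided bounds for the $\mathcal S$-invariant maximizer $h^{(p,q)}$ of $\F_{p,q}$ that force the limiting body to be either a tangential polytope or the unit ball, depending on the sign of $q$ and on whether $p$ or $q$ is the running parameter. First I would record the basic structural facts: $\F_{p,q}$ is scaling-invariant (homogeneous of degree zero in $\Om$), so we may normalize the competitors, e.g. by prescribing $\int_{\S^n} f h^p = \int_{\S^n} f$, which turns $\F_{p,q}$ into $V_q(\Om)$ on the normalized class; existence of an $\mathcal S$-invariant maximizer then follows from the uniform estimates granted by the spanning property (as asserted just before the statement) together with Blaschke-type compactness for convex bodies. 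The Euler--Lagrange equation of the maximizer is precisely \eqref{e1.1}, which I would use only qualitatively.

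For part (1), $q>0$ fixed, $p\to-\infty$: the key point is that $\F_{p,q}$ tested against a fixed tangential polytope $T$ stays bounded below by a positive constant independent of $p$, while $(\int f h^p/\int f)^{-q/p}\to \exp\big(q\,\fint \log h\big)$ as $p\to-\infty$ for any fixed Lipschitz $h$, and this Laplace-type limit is maximized (among bodies containing $B_1$, after normalization) by support functions that are $1$ on as large a set as possible — i.e. by polytopes tangent to $\S^n$. So I would argue that the maximizers $h^{(p,q)}$ cannot degenerate to the ball (a fixed polytope beats it for $p$ negative large) and cannot blow up (the spanning property caps the diameter), hence sub-converge in Hausdorff distance; then a lower-semicontinuity/upper-bound argument on $\fint\log h$ shows the limit $h_\infty$ equals $1$ on a set of full measure on $\S^n$, which forces $\Om_\infty$ to be a tangential polytope containing $B_1$. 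Part (3), $p\neq0$ fixed and $q\to+\infty$, is morally the same computation with the roles reversed: $V_q(\Om)=\int_{\S^n} r^q$ is dominated by $(\max r)^q$ times a measure-of-near-maximizers factor, so maximizing $\F_{p,q}$ as $q\to\infty$ again pushes the body toward a tangential polytope (the region where $r=h=1$, the contact set with $\S^n$, carrying the relevant asymptotics), and the diameter bound from the spanning property again prevents escape to infinity; I would extract a Hausdorff sub-limit and identify it.

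Part (2), $q<0$ fixed, $p\to-\infty$: here the mechanism is reversed — with $q<0$, $V_q(\Om)=\int r^q$ rewards small $r$, i.e. the body wants to shrink toward $B_1$ (it cannot shrink further since $\Om\supset B_1$ after normalization, or more precisely the scaling normalization pins it), and simultaneously $(\int f h^p/\int f)^{-q/p}\to\exp(q\,\fint\log h)$ with $q<0$ now penalizes $h>1$. Both effects cooperate to show $\F_{p,q}(\Om)\le \F_{p,q}(B_1)+o(1)$ with equality in the limit only at the ball, so $h^{(p,q)}\to 1$ uniformly and $\Om^{(p,q)}\to B_1$. The main obstacle across all three parts is making the Laplace/Varadhan-type asymptotics $\big(\fint f h^p\big)^{1/p}\to \exp(\fint \log h)$ uniform enough to pass to the limit along the sequence of maximizers (whose support functions vary with $p$), and simultaneously controlling the measure of the contact set so that the limiting body is genuinely a polytope rather than merely a body with $h_\infty=1$ a.e.; I expect this to be handled by combining the uniform $C^{0,1}$ bounds on $h^{(p,q)}$ (from convexity plus the diameter bound) with a careful comparison against the fixed polytope $T$ and against $B_1$, exactly as in the proofs of Theorems \ref{t1.1}–\ref{t1.2}.
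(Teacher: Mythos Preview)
Your overall strategy---normalize via scaling invariance, use the spanning property for uniform two-sided bounds, extract a Hausdorff sub-limit, identify a limiting functional, and characterize its maximizers---matches the paper's. However, there is a concrete error in your asymptotics that derails everything downstream.

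You claim that $\big(\int_{\S^n} f h^p \big/ \int_{\S^n} f\big)^{-q/p} \to \exp\big(q\,\fint \log h\big)$ as $p\to-\infty$. This is the wrong limit: the geometric mean $\exp(\fint\log h)$ is what arises when $p\to 0$, not when $p\to-\infty$. By the standard convergence of $L^p$-means to the $L^\infty$-norm,
\[
\Big(\int_{\S^n} f h^p \Big/ \int_{\S^n} f\Big)^{1/p} \longrightarrow \min_{\S^n} h \qquad (p\to-\infty),
\]
so the correct limiting functional is $\mathcal F_{-\infty,q}(\Omega)=V_q(\Omega)\big/(\min_{\S^n} h_\Omega)^q$. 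With the wrong functional your ``$h_\infty=1$ a.e.'' mechanism and your part (2) argument (``both effects cooperate'') do not go through.

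With the correct limit the paper's characterization of the maximizer is clean and does not involve any measure-of-contact-set analysis. Normalize $\min h=1$. For $q>0$ one must maximize $V_q$ over $\mathcal S$-invariant bodies containing $B_1$; if $\Omega$ is not a polytope, take a supporting half-space at some $x_0\in\partial\Omega$ and form $P_{x_0}=\bigcap_{\phi\in\mathcal S}\phi(H^-(x_0))$: this $\mathcal S$-invariant polytope strictly contains $\Omega$, still has $\min h=1$, and hence has strictly larger $V_q$---so the maximizer is a polytope. For $q<0$ the paper passes to the polar body: $\mathcal F_{-\infty,q}(\Omega)=\int_{\S^n} h_{\Omega^*}^{-q}\big/(\max_{\S^n} r_{\Omega^*})^{-q}$, and after normalizing $\max r_{\Omega^*}=1$ this is at most $|\S^n|$, with equality only for the ball. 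Tangentiality to $\S^n$ in parts (1) and (3) comes from the analog of Corollary~\ref{c3.1}, i.e.\ $\min_{\S^n} h^{(p,q)}\to 1$, not from an a.e.\ argument. Part (3) is handled by the dual computation: as $q\to+\infty$ the relevant limiting functional is $\int_{\S^n} f h^p\big/(\max_{\S^n} r)^p$, and an inclusion argument with the polytope $P_a=\text{conv}\{\phi(a):\phi\in\mathcal S\}$ (where $r(a)=\max r$) shows the extremizer is a polytope for either sign of $p$.
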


 The next theorem shows that every regular polytope tangential to the unit sphere is a limit polytope.

\begin{theo}\label{t1.4}
Let $T\subset{\mathbb{R}}^{n+1}$ be a regular polytope and  let $(p,q)\in{\mathbb{R}}^2$ satisfy $pq\not=0$.
\begin{itemize}
\item [(1)]
If $T$ is tangential to the unit sphere ${\mathbb{S}}^n$,
there exists a local maximizer  $\Omega^{(p,q)}$ of the functional $V_{q}$,
such that  when $q>0$, $\Omega^{(p,q)}$ converges to $T$ as $p\to-\infty$.
\item [(2)]
When $p\ne 0$,  there exists a local maximizer $\Omega^{(p,q)}$ of   $V_q$,
which converges to a regular polytope similar to $T$ as $q\to+\infty$.
\end{itemize}
\end{theo}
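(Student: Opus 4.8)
The plan is to run the group-invariant variational machinery from Theorems \ref{t1.2} and \ref{t1.3} on the functional $\F_{p,q}$, restricted to the symmetry class of the given regular polytope $T$, and then identify the maximizer with (a dilate of) $T$ in the limit. For part (1), fix $q>0$. Let $\mathcal S = \mathcal S_T$ be the symmetry group of $T$; since $T$ is a regular polytope, $\mathcal S_T$ is a finite subgroup of $O(n+1)$ with the spanning property. Consider the sub-class of $\mathcal S_T$-invariant convex bodies in $\mathcal K_0(\mathcal S_T)$ having \emph{exactly} the combinatorial face structure dictated by $T$ — equivalently, bodies whose support function, when restricted to a fundamental domain of $\mathcal S_T$ on $\S^n$, is determined by finitely many "distances to faces" parameters. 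Within this finite-dimensional symmetric family one extracts a maximizer $\Omega^{(p,q)}$ of $\F_{p,q}$ (or, what amounts to the same after normalizing the $L_p$-integral of $h$, a local maximizer of $V_q$), exactly as in the proof of Theorem \ref{t1.3}(1),(3): the spanning property yields the uniform upper and lower bounds on $h^{(p,q)}$, so $\Omega^{(p,q)}$ stays in a compact class of convex bodies and a subsequence converges. The Euler–Lagrange equation identifies $\Omega^{(p,q)}$ with a solution of \eqref{e1.1} in the sense of measures.

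Next I would show the limit as $p\to-\infty$ is the polytope $T$ itself and not merely \emph{some} group-invariant polytope. This is where the hypothesis that $T$ is tangential to $\S^n$ enters decisively. The point is that $\F_{p,q}$, after the normalization $\int_{\S^n} f h^p / \int_{\S^n} f = 1$, reduces to maximizing $V_q(\Omega)$ subject to a constraint that, as $p\to-\infty$, degenerates to: $h\ge 1$ $\sigma$-a.e.\ on $\S^n$, i.e.\ $\Omega\supset B_1$. (Indeed $\big(\int f h^p/\int f\big)^{1/p}\to \operatorname{ess\,inf} h$ as $p\to-\infty$, by the standard $L^p$-norm limit, using $c_1\le f\le c_2$.) Among $\mathcal S_T$-invariant convex bodies containing $B_1$ and lying in our prescribed combinatorial class, $V_q$ with $q>0$ is maximized — since $V_q$ is monotone under inclusion for $q>0$ — by the \emph{largest} such body, and the largest polytope in that symmetry class all of whose facets touch $B_1$ is precisely $T$ (after normalizing $B_1$ to be the insphere; here we use that $T$ is tangential to $\S^n$, so its insphere is exactly $\S^n$). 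A compactness-plus-uniqueness argument (any subsequential limit is a group-invariant polytope containing $B_1$, maximizing $V_q$, hence equals $T$) upgrades "sub-converges" to "converges."

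For part (2), fix $p\ne 0$ and let $q\to+\infty$. Now I would argue as in Theorem \ref{t1.3}(3): again take $\Omega^{(p,q)}$ a local maximizer of $V_q$ in the $\mathcal S_T$-symmetric class, extract a subsequential limit $\Omega^{(p,\infty)}$, which is a group-invariant polytope. The role of $q\to+\infty$ is that $V_q(\Omega)^{1/q}\to \max_{y\in\partial\Omega}|y| = R(\Omega)$, the circumradius, so in the limit one is maximizing the circumradius subject to the (rescaled) $L_p$-constraint on $h$; the extremal group-invariant polytope for this problem is a regular polytope of the same symmetry type as $T$, i.e.\ similar to $T$, with the similarity ratio fixed by the normalization of $\int f h^p$. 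One then rescales to land on a regular polytope similar to $T$ exactly. Throughout, the uniform estimates guaranteeing non-degeneracy of the limit come from the spanning property of $\mathcal S_T$, precisely as in Theorem \ref{t1.1}.

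The main obstacle I anticipate is the rigidity step: showing that the subsequential limit polytope is \emph{exactly} $T$ (up to similarity), rather than some other group-invariant polytope admitted by the same symmetry group — a regular polytope has a large symmetry group, but that group can fix several combinatorially distinct invariant polytopes (e.g.\ a cube and an octahedron share a symmetry group). The resolution is to restrict the variational problem to the correct combinatorial stratum from the outset (so competitors have $T$'s face lattice), and then to invoke the tangency normalization: among invariant polytopes of a \emph{fixed} combinatorial type that contain $B_1$, the one whose every facet is tangent to $\S^n$ is the unique maximizer of $V_q$ for $q>0$, and likewise the unique minimizer of the circumradius at fixed "width," which pins down part (2) after scaling. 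Making the selection of this stratum compatible with the group action — i.e.\ checking that maximizing $\F_{p,q}$ over it actually produces a genuine local maximizer in the full space of $\mathcal S_T$-invariant bodies, not merely a constrained critical point — is the delicate bookkeeping that the proof must handle.
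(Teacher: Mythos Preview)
Your proposal has a genuine gap in the rigidity step, and the mechanism you propose for identifying the limit does not work.

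First, the limiting constraint is not $h\ge 1$; the normalization $\int f h^p/\int f =1$ forces $(\int f h^p/\int f)^{1/p}=1$, whose limit as $p\to-\infty$ is $\min_{\S^n} h =1$, an \emph{equality}. So the limiting variational problem is $\sup V_q(\Omega)$ over $\mathcal S_T$-invariant $\Omega$ with $\min h_\Omega=1$. Your monotonicity argument (``take the largest body containing $B_1$'') therefore collapses: over this class $V_q$ is unbounded above, and there is nothing forcing the maximizer to be $T$ without some further localization.

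Second, your fix --- restricting to the ``combinatorial stratum'' of bodies with $T$'s face lattice --- trivializes the problem. Because $\mathcal S_T$ acts transitively on facets of a regular polytope, an $\mathcal S_T$-invariant body with $T$'s combinatorial type is just a dilate of $T$; after imposing $\min h=1$ the stratum is the single point $\{T\}$. You then have no nontrivial variational problem to run and no Euler--Lagrange equation to extract; you certainly have not produced a local maximizer in the full space $\mathcal K_0(\mathcal S_T)$. You flag this as ``delicate bookkeeping,'' but it is the entire content of the theorem.

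The paper's approach is different in kind. It works in a full Hausdorff $\delta$-neighborhood $\mathcal N_\delta(T)$ inside $\mathcal K_0(\mathcal S_T)$ (no combinatorial restriction) and proves by a direct computation that $T$ is a \emph{strict} local maximizer of $V_q$ on the normalized slice $\mathcal K_{\min}(\delta,\mathcal S_T)=\{\min h=1\}\cap \mathcal N_\delta(T)$ (Lemma~\ref{l7.1}, building on the subdivision argument of Lemma~\ref{l5.1}): one decomposes a facet of $T$ into congruent simplices via the flag structure, writes the $V_q$-difference between $T$ and a nearby competitor as an explicit integral over one simplex depending on the tangency point $\overline w_0$, and checks that its radial derivative at $\overline w_0=0$ is strictly positive. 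This strict local maximality then yields a gap $\sigma>0$ between $V_q(T)$ and $\sup_{\partial \mathcal K_{\min}(\delta,\mathcal S_T)} V_q$, which propagates (via equicontinuity of $\F_{p,q}$ in $p$) to show that the maximizer of the local scheme \eqref{e7.3} stays in the interior of $\mathcal N_\delta(T)$ for $p$ negative large (Lemma~\ref{l7.2}); hence it satisfies \eqref{e1.1}, and converges to the unique local maximizer $T$. Part~(2) is handled by a dual version of the same computation on $\mathcal K_{\max}(\delta,\mathcal S_T)$ (Lemma~\ref{l7.3}). The key lemma you are missing is precisely this strict-local-maximality computation; without it, nothing prevents the maximizer from drifting to a different $\mathcal S_T$-invariant polytope or from escaping to infinity.
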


This paper is organized as follows.
In Section 2, we introduce some notations and properties concerning subgroups $\mathcal{S}\subset O(n+1)$
and the corresponding group invariant polytopes.
In Section 3, we prove the existence of non-trivial group invariant solutions to \eqref{e1.2}
by a variational method for negative large $p$.
By passing to the limit $p\to-\infty$, we characterize the limiting convex body $\Omega^{(-\infty)}$ in Section 4.
In Section 5, we show that a regular polytope $T$
is a strictly local maximizer of the functional $\F_{p}$ and thus give a proof of Theorem \ref{t1.2}.
Finally, we consider the $L_p$ dual Minkowski problem \eqref{e1.1}
and prove Theorems \ref{t1.3}-\ref{t1.4} in Sections 6-7.

\vspace{10pt}

\section{Discrete subgroup  and group invariant polytope}

\subsection{Notations}
\

We will work in Euclidean space, where $|x|=\sqrt{x\cdot x}$ for any $x\in\mathbb{R}^{n+1}$. A compact convex set with non-empty interior in $\mathbb{R}^{n+1}$ is called a convex body.
Denote by ${\mathcal{K}}_0$ the set of convex bodies in ${\mathbb{R}}^{n+1}$ containing the origin.
Given a convex body $\Om\in \mathcal K_0$,
the support function $h: {\mathbb{S}}^{n}\to{\mathbb{R}}$ of $\Omega$ is defined by
$$h(x)=\max\big\{z\cdot x\ |\ z\in\Omega\big\},\ \ \forall x\in{\mathbb{S}}^{n}. $$
We denote by $h_{ij}=\nabla^2_{ij}h$ the Hessian matrix of $h$
with respect to an orthonormal frame $\{e_i\}_{i=1}^n$ on the unit sphere ${\mathbb{S}}^{n}$, where
$\delta_{ij}$ is the Kronecker delta symbol.
When $\Om$ is uniformly convex, the matrix $(A_{ij}):= (h_{ij}+h\delta_{ij})$ is positive definite,
with inverse $(A^{ij})$.
Let the radial distance function of $\partial \Omega$ be given by
$$
    r(y)=\sup\big\{\lambda>0\ |\ \lambda y\in\Omega\big\}, \ \forall \ y\in{\mathbb{R}}^{n+1}.
$$
There holds $r(y)y=h(x)x+\nabla h(x)$, where $x$ is the unit outer normal of $\p\Om$ at the point $ r(y)y\in\p\Om$,
such that $G: \  r(y)y\to x$ is the Gauss map.
Thus, $\Om$ can be recovered from $h$ by
$$\p\Om=\{h(x)x+\nabla h(x)\ |\ x\in{\mathbb{S}}^{n}\}. $$
The surface measure of $\partial\Omega$ is given by
$$
dS= \det(\nabla^2h+hI)d\sigma,
$$
where $d\sigma$ is the surface measure of the sphere ${\mathbb{S}}^{n}$.

\subsection{Discrete subgroup  and group invariant polytope}
\

In this subsection, we introduce some notations and properties about the group invariant polytopes
that will be used later.
According to \cite{Serge2002}, a group is a set that is closed under multiplication, and  satisfies
 the associative law, has a unit element, and has inverses.
By definition, each discrete subgroup ${\mathcal{S}}$ of $O(n+1)$ is a finite group.
For example,
 $$
  {\mathcal{S}}_a = \bigg\{\left(
    \begin{array}{cc}
      \cos\theta & \sin\theta\\
      -\sin\theta & \cos\theta
    \end{array}
    \right), \ \ \theta=j\times 2\pi/a, \ \ j\in{\mathbb{Z}}\bigg\}\subset O(2)
 $$
is a discrete subgroup when $a$ is rational, and an infinite subgroup when $a$ is irrational.

Let ${\mathcal{S}}$ be a discrete subgroup of $O(n+1)$.
We say that ${\mathcal{S}}$ satisfies the {\it spanning property}
if for any $a\in{\mathbb{S}}^{n}$, the set
$$P_a:= \text{conv}\{\phi(a), \phi\in{\mathcal{S}}\}$$
forms  a non-degenerate, $(n+1)$-dimensional polytope
in   ${\mathbb{R}}^{n+1}$.
Here we denote by $\text{conv} (\Psi)$ the convex hull of the set $\Psi$.
Moreover, we also define the bounded ratio condition by
    \begin{equation}\label{e2.1}
      \sup_{a\in{\mathbb{S}}^{n}}\gamma_a<\infty,
      \ \ \gamma_a:=\max_{{\mathbb{S}}^{n}}h_a/\min_{{\mathbb{S}}^{n}}h_a,
    \end{equation}
for the support function $h_a$ of $P_a$. We have the following equivalence.

\begin{prop}\label{p2.1}
 For a discrete subgroup ${\mathcal{S}}$ of $O(n+1)$,
the spanning property is equivalent to the bounded ratio condition \eqref{e2.1}.
\end{prop}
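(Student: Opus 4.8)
The plan is to translate both conditions into a single geometric statement about how deeply the origin sits inside the polytopes $P_a$, and then to close the argument with the compactness of $\mathbb{S}^n$ together with the continuity of $a\mapsto P_a$.

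First I would collect some elementary observations. Since $\mathcal{S}$ is a finite subgroup of $O(n+1)$, every orbit $\{\phi(a):\phi\in\mathcal{S}\}$ is a finite subset of $\mathbb{S}^n$, so $P_a$ is a polytope whose vertices lie on $\mathbb{S}^n$; in particular $h_a(x)=\max_{\phi\in\mathcal{S}}\langle\phi(a),x\rangle\le 1$ for all $x\in\mathbb{S}^n$, with equality at $x=a$ (take $\phi=\mathrm{id}$), so $\max_{\mathbb{S}^n}h_a=1$ and $\gamma_a=1/\min_{\mathbb{S}^n}h_a$. Next, for a closed convex body one has $\min_{\mathbb{S}^n}h_a>0$ if and only if $0\in\Int P_a$: if $0\notin\Int P_a$, a supporting or separating hyperplane produces a unit direction $x$ with $h_a(x)\le 0$, while if $B_\rho\subset P_a$ then $h_a\ge\rho$ on $\mathbb{S}^n$. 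Hence the bounded ratio condition \eqref{e2.1} is equivalent to the existence of $\rho_0>0$ with $B_{\rho_0}\subset P_a$ for every $a\in\mathbb{S}^n$, and the proposition reduces to showing that the spanning property is equivalent to this uniform inball bound.

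For the implication ``spanning $\Rightarrow$ bounded ratio'' the crucial step is to show $0\in\Int P_a$ for each $a$, which I would do by averaging over the group. The centroid $c_a:=|\mathcal{S}|^{-1}\sum_{\phi\in\mathcal{S}}\phi(a)$ is fixed by every $\psi\in\mathcal{S}$, since $\psi\phi$ runs over $\mathcal{S}$ as $\phi$ does. If $c_a\neq 0$ for some $a$, then $c_a/|c_a|\in\mathbb{S}^n$ is a unit vector fixed by all of $\mathcal{S}$, so $P_{c_a/|c_a|}$ is a single point, contradicting the spanning property; hence $c_a=0$, i.e. $0\in P_a$. If moreover $0\in\partial P_a$, a supporting hyperplane $\{\langle y,\nu\rangle=0\}$ with $P_a\subset\{\langle y,\nu\rangle\le 0\}$, combined with $\sum_{\phi}\langle\phi(a),\nu\rangle=|\mathcal{S}|\langle c_a,\nu\rangle=0$, forces $\langle\phi(a),\nu\rangle=0$ for all $\phi$, so the whole orbit lies in a linear hyperplane, contradicting $\dim P_a=n+1$; thus $0\in\Int P_a$. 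Finally, $(a,x)\mapsto h_a(x)$ is continuous as a finite maximum of continuous functions, so $a\mapsto\min_{\mathbb{S}^n}h_a$ is continuous and strictly positive on the compact set $\mathbb{S}^n$, hence bounded below by some $\rho_0>0$, which gives $\sup_{a}\gamma_a\le 1/\rho_0<\infty$.

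The converse is then immediate: if \eqref{e2.1} holds, then $\min_{\mathbb{S}^n}h_a>0$ for every $a$, so $0\in\Int P_a$, which forces $\dim P_a=n+1$, i.e. the spanning property. I expect the first implication to be the only genuine difficulty, and within it the two non-routine ingredients are the fixed-vector/centroid argument excluding $0\in\partial P_a$ and the passage from ``$0$ interior for each $a$'' to a uniform inball radius via compactness; the converse and the remaining bookkeeping are routine.
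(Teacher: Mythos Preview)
Your proof is correct. The paper argues both implications by contrapositive in three lines: if spanning fails at some $a$ then $P_a$ is degenerate and $\gamma_a=\infty$; conversely, if $\gamma_{a_j}\to\infty$ along $a_j\to a$, then ``it is clear that $P_a$ is degenerate.'' You instead prove the harder direction (spanning $\Rightarrow$ bounded ratio) directly, and in doing so you supply the one nontrivial ingredient the paper leaves implicit: the fixed-vector/centroid argument showing that under spanning the origin lies in the \emph{interior} of every $P_a$. Without this, the paper's ``clear'' step is not obvious, since $\min_{\mathbb{S}^n}h_a=0$ (i.e.\ $0\in\partial P_a$) does not by itself force $P_a$ to be lower-dimensional. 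Your compactness/continuity passage from pointwise to uniform positivity of $\min h_a$ is the direct counterpart of the paper's limiting sequence argument. In short, the two proofs share the same skeleton, but yours makes the group-theoretic input explicit and is the more complete of the two.
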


\begin{proof} If the spanning property does not hold for some $a\in{\mathbb{S}}^{n}$,
then $P_a$ is a degenerate polytope in $(n+1)$-dimension.
In this case,  the bounded ratio condition fails.

Conversely, if the bounded ratio condition does not hold for a sequence $a_j\in{\mathbb{S}}^{n}$
with $a_j$ tending to some $a\in{\mathbb{S}}^{n}$, it is clear that $P_a$ is degenerate.
\end{proof}

The spanning property is important in the sense that
it guarantees the  {\it a-priori} estimates for the maximizing sequence of our variational problem in Section 3.
Let us point out that in the spanning property,
the set $P_a= \text{conv}\{\phi(a), \phi\in{\mathcal{S}}\}$ is generated by only one arbitrary point $a$.

For a given discrete subgroup ${\mathcal{S}}\subset O(n+1)$ satisfying the spanning property,
there may be more than one polytope corresponding to the  subgroup ${\mathcal{S}}$.
For example,
let $\mathcal S\subset O(2)$ be the discrete subgroup associated with the regular $k$-polygon for $k=5$,
then all regular $k$-polygons with $k=5m$ are also $\mathcal S$-invariant, for all integers $m\ge 2$.

Regular polytopes are of particular interest.
According to \cite{coxeter1973regular,cromwell1997polyhedra},
we introduce the following definition for regular polytopes.

\begin{defi}
 A flag is a sequence of faces of a polytope $T$,
 each contained in the next, with exactly one face from each dimension.
 More precisely, a flag $\psi$ of an $n$-dimensional polytope $T$ is a set $\{F_0, \cdots, F_{n+1}\}$,
 where $F_i$ for $0\leq i\leq n+1$ is the $i$-dimensional face of $T$, such that $F_i\subset F_{i+1}$ for all $i$.
\end{defi}

\begin{defi}
A regular polytope $T\subset{\mathbb{R}}^{n+1}$ is a polytope
whose symmetry group ${\mathcal{S}}_T$, as a maximal subgroup of $O(n+1)$,
acts transitively on its flags in the sense for any flags $x, y\subset T$,
there exists $\phi\in {\mathcal{S}}_T$ such that $\phi(x)=y$.
\end{defi}

Regular polytopes are the high dimensional analog of regular polygons ($n=1$) and regular polyhedra ($n=2$).
A concise symbolic representation for regular polytopes was developed by Schl\"{a}fli in the 19th century,
and a slightly modified form has become standard nowadays \cite{coxeter1973regular}.
Regular polytopes can be classified according to the symmetry group ${\mathcal{S}}\subset O(n+1)$.
Regular polytopes with $k$ vertices in dimension $n\geq1$ are classified as follows,
\begin{equation}\label{e2.4}
\begin{cases}
\mbox{When } n=1, \mbox{there exist } k\mbox{-regular polytopes for each } k\in{\mathbb{N}}, k\geq3.\\
\mbox{When } n=2, \mbox{there exist only } k\mbox{-regular polytopes for } k=4, 6, 8, 12, 20.\\
\mbox{When } n=3, \mbox{there exist only } k\mbox{-regular polytopes for } k=5, 8, 16, 24, 120, 600.\\
\mbox{When } n\geq4, \mbox{there exist only } k\mbox{-regular polytopes for } k=n+2, 2n+2, 2^{n+1}.
\end{cases}
\end{equation}

\vskip5pt

\begin{example}
Let $C=\text{conv}\{(\pm1,\pm1, \pm1)\}\subset{\mathbb{R}}^3$ be a standard cube. Let ${\mathcal{S}}\subset O(3)$ be the associated symmetric group. The following statements hold.
\begin{itemize}
\item[(1)] When $a=(1,1,1)$, $\text{conv}\{\phi(a), \phi\in{\mathcal{S}}\}$ is a standard cube in ${\mathbb{R}}^3$,
which is a regular polytope.

\item[(2)] When $a=(1,0,0)$, $\text{conv}\{\phi(a), \phi\in{\mathcal{S}}\}$ is a cross-polytope in ${\mathbb{R}}^3$,
which is also a regular polytope.

\item[(3)]  When $a=(1,1,r), r\in(0,1)$, $\text{conv}\{\phi(a), \phi\in{\mathcal{S}}\}$ is a group invariant polytope
formed by cutting eight solid angles from eight vertices of the cube,
which is not a regular polytope.
\end{itemize}
\end{example}

\vspace{10pt}

\section{Existence of group  invariant solutions of \eqref{e1.2}}

Group invariant solutions have been studied in various geometric problems.
Here we consider the discrete subgroup ${\mathcal{S}}\subset O(n+1)$ that satisfies the spanning property.
By the bounded ratio condition in Section 2,
we can prove the following existence result.

\begin{theo}\label{t3.1}
 Let ${\mathcal{S}}$ be a discrete subgroup of $O(n+1)$ satisfying the spanning property.
 For  $p<-n-1$,
 equation \eqref{e1.2} admits a group invariant solution ${{h^{(p)}}}$,
 which is a maximizer of the global variational problem
 \begin{equation}\label{e3.1}
    \sup_{\Omega\in{\mathcal{K}}_{p}({\mathcal{S}})}V(\Omega)
  \end{equation}
over the group ${\mathcal{S}}$ invariant family
$$
    {\mathcal{K}}_{p}({\mathcal{S}})
     :=\bigg\{\Omega\in{\mathcal{K}}_0({\mathcal{S}})\ \Big|\ \int_{{\mathbb{S}}^{n}}fh_\Omega^p=\int_{{\mathbb{S}}^{n}}f\bigg\},
$$
where ${\mathcal{K}}_0 ({\mathcal{S}})$ is the set of group ${\mathcal{S}}$ invariant convex bodies containing the origin,
and $h_\Omega$ is the support function of $\Omega$.
\end{theo}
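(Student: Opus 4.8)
The plan is to solve the constrained problem \eqref{e3.1} by the direct method in the calculus of variations, and then to identify the maximizer, after a harmless rescaling, with a solution of \eqref{e1.2} via its Euler--Lagrange equation. The class $\mathcal{K}_p(\mathcal{S})$ is nonempty, since the unit ball $B_1$ (support function $h\equiv1$) is $\mathcal{S}$-invariant and satisfies $\int_{\S^n}fh^p=\int_{\S^n}f$. Once we prove a uniform two-sided bound $0<c\le h_\Omega\le C$ valid for \emph{every} $\Omega\in\mathcal{K}_p(\mathcal{S})$, the supremum in \eqref{e3.1} is automatically finite and positive (it lies between $V(B_c)$ and $V(B_C)$), and we may fix a maximizing sequence $\{\Omega_j\}\subset\mathcal{K}_p(\mathcal{S})$ with support functions $h_j$. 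I expect this uniform estimate to be the only real difficulty, and it is exactly here that the spanning property enters.

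\textbf{The key a priori estimate.} First, the spanning property --- equivalently, by Proposition \ref{p2.1}, the bounded ratio condition \eqref{e2.1} --- forces the uniform bound $\max_{\S^n}h_\Omega\le\gamma\min_{\S^n}h_\Omega$ with $\gamma:=\sup_{a\in\S^n}\gamma_a<\infty$ for \emph{all} $\mathcal{S}$-invariant convex bodies $\Omega$ containing the origin. Indeed, if $w\in\Omega$ attains $|w|=\max_{z\in\Omega}|z|=\max_{\S^n}h_\Omega$, then by $\mathcal{S}$-invariance $\Omega\supset\operatorname{conv}\{\phi(w):\phi\in\mathcal{S}\}=|w|\,P_{w/|w|}$; the spanning property forces $\mathcal{S}$ to fix no nonzero vector (otherwise some $P_a$ would reduce to a point), so the centroid of $P_{w/|w|}$ is the origin and hence $P_{w/|w|}\supset B_{\gamma^{-1}}$. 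Therefore $\Omega\supset B_{|w|/\gamma}$, i.e.
\[
\min_{\S^n}h_\Omega\ \ge\ \gamma^{-1}\max_{\S^n}h_\Omega .
\]
Now take $\Omega\in\mathcal{K}_p(\mathcal{S})$ and write $m=\min h_\Omega$, $M=\max h_\Omega$; recall $p<-n-1<0$ and $c_1\le f\le c_2$. Inserting $h_\Omega\ge M/\gamma$ into the constraint gives $c_1|\S^n|\le\int_{\S^n}fh_\Omega^p=\int_{\S^n}f\le c_2(M/\gamma)^p|\S^n|$, whence $M\le\gamma(c_2/c_1)^{1/|p|}=:C$; inserting $h_\Omega\le\gamma m$ gives $c_1(\gamma m)^p|\S^n|\le\int_{\S^n}fh_\Omega^p=\int_{\S^n}f\le c_2|\S^n|$, whence $m\ge\gamma^{-1}(c_1/c_2)^{1/|p|}=:c>0$. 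Thus
\[
B_c\ \subset\ \Omega\ \subset\ B_C\qquad\text{for every }\Omega\in\mathcal{K}_p(\mathcal{S}),
\]
with $c,C$ depending only on $n,p,c_1,c_2$ and $\mathcal{S}$.

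\textbf{Passing to the limit.} By $B_c\subset\Omega_j\subset B_C$ and the Blaschke selection theorem, a subsequence of $\{\Omega_j\}$ converges in the Hausdorff metric to a convex body $\Omega^{(p)}$ with $B_c\subset\Omega^{(p)}\subset B_C$; in particular $\Omega^{(p)}$ has the origin in its interior, and since $\mathcal{S}$ is finite and each $\Omega_j$ is $\mathcal{S}$-invariant, so is $\Omega^{(p)}$. Hausdorff convergence of the bodies is equivalent to uniform convergence $h_j\to h^{(p)}:=h_{\Omega^{(p)}}$ on $\S^n$; since $c\le h_j\le C$ and $f$ is bounded, dominated convergence gives $\int_{\S^n}fh_j^p\to\int_{\S^n}f(h^{(p)})^p$, so $\Omega^{(p)}\in\mathcal{K}_p(\mathcal{S})$, while continuity of the volume under Hausdorff convergence gives $V(\Omega_j)\to V(\Omega^{(p)})$. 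Hence $\Omega^{(p)}$ attains the supremum in \eqref{e3.1}.

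\textbf{Euler--Lagrange equation and regularity.} Finally, because $\Omega^{(p)}$ maximizes $V$ over $\mathcal{K}_p(\mathcal{S})$, a standard Lagrange-multiplier analysis for this functional --- perturbing within the $\mathcal{S}$-invariant class by Wulff shapes of $h^{(p)}+tg$ for $\mathcal{S}$-invariant $g$, using Aleksandrov's variational formula for the volume (see \cite{Sch14}) and differentiating the constraint, all integrands being controlled since $h^{(p)}\ge c>0$, exactly as in \cite{CW06} --- shows that $h^{(p)}$ satisfies $\det(\nabla^2 h+hI)=\mu f h^{p-1}$ in the Aleksandrov sense for some constant $\mu>0$. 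Since $p\ne n+1$, replacing $h^{(p)}$ by $\mu^{1/(p-n-1)}h^{(p)}$ (still an $\mathcal{S}$-invariant convex body) absorbs the multiplier and yields an exact solution of \eqref{e1.2}. By Caffarelli's regularity theory \cite{caffarelli1990localization,caffarelli1990interior} this solution is $C^{1,\alpha}$, and $C^{2,\alpha}$ --- hence classical --- when $f$ is in addition H\"{o}lder continuous. The only technical caveat here is that, a priori, the maximizer is merely a convex body (not yet known to be uniformly convex or $C^2$), so one must work with surface area measures rather than the classical Monge--Amp\`ere operator; this is routine in the $L_p$-Minkowski setting.
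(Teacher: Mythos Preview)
Your proposal is correct and follows the same overall architecture as the paper's proof (uniform two-sided bounds on $h_\Omega$ $\Rightarrow$ Blaschke selection $\Rightarrow$ Euler--Lagrange via Wulff-shape variations $\Rightarrow$ Caffarelli regularity), so the strategy matches. The one substantive difference is your lower bound on $m=\min_{\S^n}h_\Omega$: you obtain $m\ge\gamma^{-1}(c_1/c_2)^{1/|p|}$ directly from the ratio inequality $M\le\gamma m$ combined with the constraint, whereas the paper's Lemma~\ref{l3.2} uses a local integral estimate of $\int fh^p$ near the minimum point to get the sharper bound $m\ge\big[(-p)^nC_n\gamma_{\max}\,c_2/c_1\big]^{1/(n+p)}$. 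Your argument is more elementary and entirely sufficient for Theorem~\ref{t3.1}; the paper's more delicate estimate is included because it also yields $\lim_{p\to-\infty}\min_{\S^n}h^{(p)}=1$ (Corollary~\ref{c3.1}), which is needed for the asymptotic analysis in Section~4 but not for existence itself.

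One small point you pass over but the paper makes explicit: because the variation is taken only within the $\mathcal S$-invariant class, the first variation initially yields $\int_{\S^n}g\,\big(dS_{\Omega^{(p)}}-\mu f(h^{(p)})^{p-1}d\sigma\big)=0$ only for $\mathcal S$-invariant test functions $g$. The paper closes this by observing that the residual measure is itself $\mathcal S$-invariant (since $h^{(p)}$ and $f$ are), so testing against the $\mathcal S$-average $\bar g=|\mathcal S|^{-1}\sum_{\phi\in\mathcal S}g\circ\phi$ of an arbitrary $g$ gives the identity for all $g$, hence the equation holds in the Aleksandrov sense on all of $\S^n$. This is routine, but worth one sentence.
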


To prove Theorem \ref{t3.1}, it suffices to prove

\begin{lemm}\label{p3.1}
 Let ${\mathcal{S}}$ be a discrete subgroup of $O(n+1)$ satisfying the spanning property.
 For each $p<-n-1$,
the variational problem \eqref{e3.1} admits a  positive maximizer ${{h^{(p)}}}$
satisfying the equation \eqref{e1.2} up to a constant multiplication.
\end{lemm}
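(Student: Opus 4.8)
The plan is to solve the constrained maximization problem \eqref{e3.1} by the direct method of the calculus of variations, and the key point where the spanning property enters is the compactness of maximizing sequences. Let me outline the steps.

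\medskip

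\textbf{Step 1: The maximization problem is non-trivial and well-posed.} First I would check that $\mathcal{K}_p(\mathcal S)$ is non-empty and that the supremum in \eqref{e3.1} is positive and finite. Non-emptiness: for $\Omega = \lambda B_1$ we have $h_\Omega \equiv \lambda$, so $\int_{\S^n} f h_\Omega^p = \lambda^p \int_{\S^n} f$, and choosing $\lambda = 1$ puts the ball in $\mathcal{K}_p(\mathcal S)$ (it is trivially $\mathcal S$-invariant). Finiteness of the sup: this is where one needs an upper bound on $V(\Omega)$ under the constraint. Here is the delicate interplay — since $p < -n-1 < 0$, the constraint $\int f h^p = \int f$ controls $h$ from \emph{below} in an integral sense (if $h$ is small somewhere, $h^p$ is large there, which the constraint forbids from being too large on average), but it does not directly bound $h$ from above, and hence does not obviously bound $V(\Omega) = (n+1)|\Omega|$. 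This is exactly what the spanning property fixes: for an $\mathcal S$-invariant body $\Omega$, pick a direction $x_0$ where $h_\Omega$ attains its maximum $M$; then by $\mathcal S$-invariance $h_\Omega \geq M h_{P_{x_0}}/\max h_{P_{x_0}}$ pointwise (the body contains $M\cdot(\text{conv of the orbit of }x_0/\,\max)$), and by the bounded ratio condition $\eqref{e2.1}$ this gives $h_\Omega \geq M/\gamma$ everywhere with $\gamma = \sup_a \gamma_a < \infty$. Feeding this into the constraint: $\int f \geq \int f h^p \geq (M/\gamma)^p \int f$ (using $p<0$), so $M \leq \gamma$, i.e. $\Omega \subset \gamma B_1$ and $V(\Omega) \leq (n+1)\omega_{n+1}\gamma^{n+1}$. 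So the sup is finite, and moreover bounded away from $0$ since the ball is admissible.

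\medskip

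\textbf{Step 2: Compactness of a maximizing sequence.} Take $\Omega_j \in \mathcal{K}_p(\mathcal S)$ with $V(\Omega_j) \to \sup$. By Step 1, $\Omega_j \subset \gamma B_1$ uniformly. I also need a uniform \emph{lower} bound, i.e. that $\Omega_j$ does not collapse. Since $V(\Omega_j)$ is bounded below by a positive constant and $\Omega_j \subset \gamma B_1$, the inradius of $\Omega_j$ cannot go to zero — more precisely, the width/volume relation for convex bodies trapped in a fixed ball gives a uniform lower bound on, say, the maximal width, hence (after noting $0 \in \Omega_j$) a body containing a fixed ball is not immediate, but one can combine with the $\mathcal S$-invariance and the bounded-ratio argument run the other way: if $\min h_{\Omega_j} \to 0$ then $\max h_{\Omega_j} \to 0$ too (by $\max \leq \gamma \min$), forcing $V(\Omega_j) \to 0$, a contradiction. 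So $\min h_{\Omega_j} \geq c > 0$, i.e. $B_c \subset \Omega_j$. Now by Blaschke's selection theorem, a subsequence $\Omega_j \to \Omega^{(p)}$ in Hausdorff distance, with $B_c \subset \Omega^{(p)} \subset \gamma B_1$, so $\Omega^{(p)} \in \mathcal{K}_0$, it is $\mathcal S$-invariant (Hausdorff limits of invariant bodies are invariant), support functions converge uniformly, $V$ is continuous under Hausdorff convergence, and $\int f h^p$ is continuous as well (the integrands are uniformly bounded between $c^p$ and $\gamma^p$ with $f$ bounded, and $h_{\Omega_j}^p \to h_{\Omega^{(p)}}^p$ pointwise, so dominated convergence applies). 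Hence $\Omega^{(p)} \in \mathcal{K}_p(\mathcal S)$ and $V(\Omega^{(p)}) = \sup$; write $h^{(p)} := h_{\Omega^{(p)}}$.

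\medskip

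\textbf{Step 3: Euler--Lagrange equation.} Finally, I would derive that the maximizer satisfies \eqref{e1.2} up to a constant. The first variation of $V$ at a uniformly convex body in the direction of a smooth perturbation $h \mapsto h + t\varphi$ is $\frac{d}{dt}\big|_{0} V = (n+1)\int_{\S^n} \varphi \det(\nabla^2 h + hI)\,d\sigma$ (standard mixed-volume formula), while the first variation of the constraint functional $\int f h^p$ is $p\int_{\S^n} f h^{p-1}\varphi\,d\sigma$. By the Lagrange multiplier principle, there is $\mu$ with $(n+1)\det(\nabla^2 h + hI) = \mu\, p\, f h^{p-1}$, i.e. \eqref{e1.2} holds with $f$ replaced by a positive constant multiple of $f$; since $f$ is only assumed measurable and bounded, one interprets this in the weak (Aleksandrov) sense, and positivity of the multiplier follows by testing against $\varphi \equiv h$ (which gives $(n+1)V = \mu p \int f h^p = \mu p \int f$, and since $V>0$, $p<0$, $\int f>0$, we get $\mu < 0$, so $\mu p > 0$). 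Rescaling $h^{(p)}$ by a constant absorbs the multiplier, proving the lemma. The argument that the maximizer is \emph{positive} is already contained in Step 2 ($h^{(p)} \geq c > 0$). \textbf{The main obstacle} is Step 2's exclusion of degeneration together with the low regularity of $f$: one must run the variational argument entirely at the level of support functions and Aleksandrov solutions, and be careful that ``up to a constant multiplication'' is the correct and only conclusion, since the normalization in $\mathcal{K}_p(\mathcal S)$ fixes a scaling that need not coincide with the one making the multiplier equal to $1$.
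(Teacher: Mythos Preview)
Your Steps 1 and 2 are essentially the paper's Lemmas \ref{l3.1}--\ref{l3.2} plus Blaschke selection, and your upper-bound argument via the bounded-ratio condition is exactly how the paper proceeds. Your lower bound is slightly different---you use $\max h \le \gamma \min h$ together with nondegeneracy of the volume along the maximizing sequence, whereas the paper derives a direct quantitative lower bound from the constraint integral alone---but both routes are valid and yield the same compactness.

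The real gap is in Step 3. You compute the first variation of $V$ ``at a uniformly convex body'' via the linear perturbation $h\mapsto h+t\varphi$, but the maximizer $h^{(p)}$ is a priori only a Lipschitz support function, not $C^2$ or strictly convex, so neither the pointwise expression $\det(\nabla^2 h+hI)$ nor the admissibility of $h+t\varphi$ as a support function is justified. You flag this yourself (``one must run the variational argument entirely at the level of support functions and Aleksandrov solutions'') but do not carry it out. The paper's route, citing \cite{CCL21} and \cite{CW06}, is: (i) perturb via Wulff shapes and use Aleksandrov's variational lemma to get a one-sided bound on the surface area measure of $\Omega^{(p)}$; (ii) pass to the polar body to obtain the reverse bound, so that the Monge--Amp\`ere measure has density bounded above and below; (iii) invoke Caffarelli's strict-convexity and $C^{1,\alpha}$ theory \cite{caffarelli1990localization}; and only then (iv) conclude \eqref{e1.2} by testing against $\mathcal S$-invariant functions and using that a group-invariant $L^2$ function orthogonal to all group-invariant functions must vanish. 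Without (i)--(iii), your Lagrange-multiplier computation is formal: the Wulff shape of $h+t\varphi$ does not in general have support function $h+t\varphi$, so the variation of the constraint $\int f h^p$ along admissible competitors is not simply $p\int f h^{p-1}\varphi$, and the passage from the variational inequality to the measure identity requires exactly the regularity input you have omitted.
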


To prove Lemma \ref{p3.1}, we first prove the following lemmas.

\begin{lemm}\label{l3.1}
For $n\geq1$, $p<0$,
there exists a positive constant $C_{n}({\mathcal{S}})$ depending only on $n$ and ${\mathcal{S}}$, such that
\begin{equation}\label{e3.4}
      h_\Omega(x)\leq C_{n}({\mathcal{S}}), \ \ \forall\ x\in{\mathbb{S}}^{n},
\end{equation}
for each $\Omega\in{\mathcal{K}}_{p}({\mathcal{S}})$.
\end{lemm}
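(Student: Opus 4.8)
The plan is to control $\max_{\mathbb S^n} h_\Omega$ from above by combining the spanning property with the constraint $\int_{\mathbb S^n} f h_\Omega^p = \int_{\mathbb S^n} f$. The heuristic is that if $h_\Omega$ were very large somewhere, group invariance would force $h_\Omega$ to be large on a whole orbit of directions, and convexity would then force $h_\Omega$ to be bounded below on a definite portion of the sphere, which (since $p<0$) would make $\int f h_\Omega^p$ too small to satisfy the constraint. Let me set $M := \max_{\mathbb S^n} h_\Omega = h_\Omega(a)$ for some $a\in\mathbb S^n$. By $\mathcal S$-invariance, $h_\Omega(\phi(a)) = M$ for every $\phi\in\mathcal S$, so the point $Ma$ and all its images $M\phi(a)$ lie on $\partial\Omega$ (they are support points in those directions), hence the polytope $M\cdot P_a = \operatorname{conv}\{M\phi(a):\phi\in\mathcal S\}$ is contained in $\Omega$. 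By the spanning property, $P_a$ is a non-degenerate $(n+1)$-dimensional polytope, and by Proposition \ref{p2.1} (the bounded ratio condition \eqref{e2.1}) its support function satisfies $h_{P_a}\ge \min_{\mathbb S^n} h_{P_a}\ge \gamma_a^{-1}\max_{\mathbb S^n}h_{P_a}$; a separate elementary bound gives $\max_{\mathbb S^n}h_{P_a}\ge 1$ since $a\in P_a$ has $|a|=1$. Consequently, monotonicity of support functions under inclusion yields
\[
h_\Omega(x)\ \ge\ M\, h_{P_a}(x)\ \ge\ \frac{M}{\gamma_a}\ \ge\ \frac{M}{\Gamma},\qquad\forall x\in\mathbb S^n,
\]
where $\Gamma := \sup_{a\in\mathbb S^n}\gamma_a<\infty$ is the uniform ratio constant furnished by the spanning property.

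Now I plug this into the constraint. Since $p<0$, we have $h_\Omega^p \le (M/\Gamma)^p$ pointwise, so
\[
\int_{\mathbb S^n} f\ =\ \int_{\mathbb S^n} f h_\Omega^p\ \le\ \Big(\frac{M}{\Gamma}\Big)^p\int_{\mathbb S^n} f,
\]
and dividing by $\int_{\mathbb S^n}f>0$ gives $1\le (M/\Gamma)^p$, i.e. $(M/\Gamma)^{-p}\le 1$. Since $-p>0$ this forces $M\le\Gamma$. Thus $h_\Omega(x)\le\Gamma=:C_n(\mathcal S)$ for all $x$, which is the claimed bound, with the constant depending only on $n$ and $\mathcal S$ (through the geometry of the orbit polytopes $P_a$) and not on $p$ or on $f$ beyond the constraint.

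The one point that needs care — and is the main technical obstacle — is the uniform lower bound $h_{P_a}(x)\ge\gamma_a^{-1}$ on $\mathbb S^n$ together with the normalization $\max h_{P_a}\ge 1$, and then the uniformity $\sup_a\gamma_a<\infty$; this is exactly where Proposition \ref{p2.1} is used, so strictly speaking the lemma is a short corollary of that proposition plus the inclusion $M P_a\subseteq\Omega$. If one wanted a self-contained argument, one would instead argue by compactness: the map $a\mapsto P_a$ is continuous on the compact sphere, each $P_a$ is a non-degenerate polytope so $0$ lies in its interior and $\operatorname{dist}(0,\partial P_a)>0$, hence $\inf_{a\in\mathbb S^n}\operatorname{dist}(0,\partial P_a)=:\rho>0$; then $h_{P_a}\ge\rho$ everywhere while $h_{P_a}(a)\le \operatorname{diam}(P_a)\le 2$ (as $P_a\subset B_1$... actually $P_a\subseteq B_1$ since each vertex has norm $1$), giving $\gamma_a\le 2/\rho$ uniformly. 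Either route closes the proof; I expect the authors to invoke Proposition \ref{p2.1} directly since they set it up for precisely this purpose.
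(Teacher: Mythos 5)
Your proof is correct and takes essentially the same route as the paper: include an orbit polytope in $\Omega$ at the scale of $\max_{\mathbb S^n}h_\Omega$, use the bounded ratio condition of Proposition~\ref{p2.1} to upgrade this to the pointwise lower bound $h_\Omega\ge M/\Gamma$, and feed that into the constraint $\int_{\mathbb S^n}fh_\Omega^p=\int_{\mathbb S^n}f$ with $p<0$ to force $M\le\Gamma$. The only (cosmetic) difference is the choice of base point: the paper takes the circumradius contact point $Rb\in\partial\Omega\cap\partial B_R$, so $Rb\in\Omega$ is immediate, whereas you take $a$ where $h_\Omega(a)=M$ is maximal and assert $Ma\in\partial\Omega$ without comment; this is true, but deserves the one-line justification that the support point $z_0$ in direction $a$ satisfies $|z_0|\le\max_{\mathbb S^n}h_\Omega=M=z_0\cdot a$, so equality in Cauchy--Schwarz gives $z_0=Ma$. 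Since $\max_{z\in\Omega}|z|=\max_{\mathbb S^n}h_\Omega$, the two choices of base point actually coincide, and the two proofs are the same.
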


\begin{proof} For any $\Omega\in{\mathcal{K}}_{p}({\mathcal{S}})$,
we can assume that $\Omega\subset B_R$ (the ball with radius $R$ centered at $0$)
and that $Rb\in\partial\Omega$ for some $R>0$ and $b\in{\mathbb{S}}^{n}$.
Noting that $\Omega\supset P_b= R\, \text{conv}\{\phi(b)|\ \phi\in{\mathcal{S}}\}$ by convexity, we have
   \begin{equation}
     \max_{{\mathbb{S}}^{n}}h_\Omega/\min_{{\mathbb{S}}^{n}}h_\Omega\leq\gamma_{b}
     \leq\gamma_{max}({\mathcal{S}}),
   \end{equation}
where $\gamma_{max}({\mathcal{S}})=\sup_{a\in{\mathbb{S}}^{n}}\gamma_a$
and $\gamma_{a}$ is the ratio introduced in \eqref{e2.1}.
Thus, we obtain that
   \begin{equation}
   {\begin{split}
   \int_{{\mathbb{S}}^{n}}f
    &=\int_{{\mathbb{S}}^{n}}fh^p\leq\max_{{\mathbb{S}}^{n}} h^p\int_{{\mathbb{S}}^{n}}f  \\
    &\le \bigg(\frac{\max_{{\mathbb{S}}^{n}}h}{\gamma_{max}({\mathcal{S}})}\bigg)^{p}\int_{{\mathbb{S}}^{n}}f.
    \end{split}}
   \end{equation}
This gives exactly the desired estimate \eqref{e3.4} for negative $p$.
\end{proof}

\begin{lemm}\label{l3.2}
For $n\geq1$, $p<-n$,
and a group ${\mathcal{S}}$ invariant positive function $f$ on $\mathbb{S}^{n}$,
there exists a positive constant $C_{n}$ depending only on $n$ such that
\begin{equation}\label{e3.5}
 h(x)\geq \Big[(-p)^n\cdot C_{n}\cdot \gamma_{max}({\mathcal{S}})\cdot
     \frac{\max_{{\mathbb{S}}^{n}}f}{\min_{{\mathbb{S}}^{n}}f}\Big]^\frac{1}{n+p}, \ \ \forall \ x\in{\mathbb{S}}^{n},
\end{equation}
for any support function $h$ with the associated convex body in ${\mathcal{K}}_{p}({\mathcal{S}})$.
\end{lemm}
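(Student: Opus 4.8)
The plan is to play the uniform upper bound of Lemma~\ref{l3.1} against the normalization $\int_{\mathbb S^n}fh^p=\int_{\mathbb S^n}f$ that defines $\mathcal K_p(\mathcal S)$, by localizing near a point where $h$ attains its minimum. Throughout, fix $\Omega\in\mathcal K_p(\mathcal S)$ with support function $h$, and set $m:=\min_{\mathbb S^n}h$. Since $p<0$, finiteness of $\int_{\mathbb S^n}fh^p$ forces the origin into the interior of $\Omega$, so $m>0$; let $x_0\in\mathbb S^n$ realize $h(x_0)=m$.

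The first step is a lower bound for $\int_{\mathbb S^n}h^p\,d\sigma$. By Lemma~\ref{l3.1}, $h\le M:=C_n(\mathcal S)$ on $\mathbb S^n$, hence $\Omega\subset B_M$; since $h(x)=\max_{z\in\Omega}z\cdot x$ this makes $h$ an $M$-Lipschitz function for the geodesic distance on $\mathbb S^n$. Consequently, for any $\varepsilon\in(0,1]$ and $\rho:=\varepsilon m/M$ (note $\rho\le m/M\le 1<\pi/2$, since $m\le\max h\le M$), on the geodesic ball $B_\rho(x_0)$ we have $h\le m+M\rho=(1+\varepsilon)m$. Using $p<0$ and the elementary volume estimate $|B_\rho(x_0)|\ge c_n\rho^n$ for $\rho\le\pi/2$ (with $c_n=|\mathbb S^{n-1}|(2/\pi)^{n-1}/n$), this yields
\[
\int_{\mathbb S^n}h^p\,d\sigma\ \ge\ \int_{B_\rho(x_0)}h^p\,d\sigma\ \ge\ \big((1+\varepsilon)m\big)^p\,|B_\rho(x_0)|\ \ge\ c_n(1+\varepsilon)^p\varepsilon^n\,\frac{m^{p+n}}{M^n}.
\]

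Next I would feed this into the constraint. Because $f$ is pinched between positive constants,
\[
\max_{\mathbb S^n}f\cdot|\mathbb S^n|\ \ge\ \int_{\mathbb S^n}f\ =\ \int_{\mathbb S^n}fh^p\ \ge\ \min_{\mathbb S^n}f\cdot c_n(1+\varepsilon)^p\varepsilon^n\,\frac{m^{p+n}}{M^n},
\]
and, since $p+n<0$, this rearranges to
\[
m\ \ge\ \Big[\frac{\max f}{\min f}\cdot\frac{|\mathbb S^n|\,M^n}{c_n}\cdot\frac{1}{(1+\varepsilon)^p\varepsilon^n}\Big]^{\frac1{p+n}}.
\]
It then remains to choose $\varepsilon$ to make $(1+\varepsilon)^p\varepsilon^n$ as large as possible. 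The unconstrained maximizer is $\varepsilon^\ast=n/(-p-n)$. When $p\le-2n$ one has $\varepsilon^\ast\le1$, and since $(1+\varepsilon^\ast)^p=(1-n/(-p))^{-p}\ge 4^{-n}$ and $(\varepsilon^\ast)^n\ge n^n(-p)^{-n}$ one gets $(1+\varepsilon^\ast)^p(\varepsilon^\ast)^n\ge(n/4)^n(-p)^{-n}$; when $-2n<p<-n$ the choice $\varepsilon=1$ gives $(1+\varepsilon)^p\varepsilon^n=2^p\ge 4^{-n}\ge(n/4)^n(-p)^{-n}$ as well. (This is nothing but the elementary asymptotics $\int_0^\infty(1+u)^p u^{n-1}\,du=B(n,-p-n)\sim(n-1)!\,(-p)^{-n}$ of the Beta function.) In either case $\big((1+\varepsilon)^p\varepsilon^n\big)^{-1}\le(4/n)^n(-p)^n$, and substituting back, together with $M=C_n(\mathcal S)$, gives the claimed bound \eqref{e3.5} with a constant depending only on $n$ and $\mathcal S$.

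The one point that requires care is this last optimization, and it is the heart of the matter. A careless choice such as $\rho=m/M$ (so that $h\le 2m$ on $B_\rho$) produces only the factor $2^{-p}$, which is exponentially large in $-p$ and would give the far weaker conclusion $\liminf_{p\to-\infty}\min h\ge\tfrac12$; obtaining the correct polynomial factor $(-p)^n$ — which is precisely what will force the limit polytope to be tangential to $\mathbb S^n$ rather than merely to contain a smaller ball — requires balancing the linear growth of $h$ away from $x_0$ (favouring small $\varepsilon$) against the volume of the localizing ball (favouring large $\varepsilon$). Everything else is routine bookkeeping of dimensional constants.
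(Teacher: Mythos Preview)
Your argument is correct and follows the same strategy as the paper: bound $h$ linearly near its minimum via $h(x)\le m+M\,\mathrm{dist}(x,x_0)$ (the paper writes the equivalent $h\le m+M\sin\theta$), integrate $h^p$ against this bound, extract the factor $m^{p+n}M^{-n}$ together with a quantity of order $(-p)^{-n}$, and close with the normalization $\int fh^p=\int f$ and Lemma~\ref{l3.1}. The only cosmetic difference is that the paper keeps the full one-dimensional integral and changes variables to $t=M\theta/m$ before estimating (essentially the Beta integral you mention), whereas you truncate to a ball of radius $\varepsilon m/M$ and optimize over $\varepsilon$; both routes produce the same $(-p)^{-n}$ and hence the same conclusion.
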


\begin{proof}
Denote $M=\max_{{\mathbb{S}}^{n}}h$ and $m= \min_{{\mathbb{S}}^{n}}h$.
Assume that  $m=h(x_0)$ for the south polar $x_0$.
By the convexity of $h$, we have
\begin{equation*}
    h(x)\leq m+M\sin\theta, \ \ \forall x\in{\mathbb{S}}^{n},
\end{equation*}
where $\theta= \text{dist}(x,x_0)$ is the geodesic distance from $x_0$ to $x$. Hence,
\begin{eqnarray*}
   \int_{{\mathbb{S}}^{n}}fh^p
   &\geq& C_1\min_{{\mathbb{S}}^{n}}f\int^{\pi/2}_0\frac{\theta^{n-1} d\theta}{(m+M\theta)^{-p}}\\
     &=&   C_1M^{-n}m^{p+n}\min_{{\mathbb{S}}^{n}}f\int^{M\pi/(2m)}_0\frac{t^{n-1}}{(1+t)^{-p}}dt
\end{eqnarray*}
for a positive constant $C_1$ depending only on $n$.
By taking some large constant $C_2>0$, noting that
 $$
  \int^{M\pi/(2m)}_0\frac{t^{n-1}}{(1+t)^{-p}}dt\geq\int^{1/(-pC_2)}_0\frac{t^{n-1}}{(1+t)^{-p}}dt\geq C_3^{-1}(-p)^{-n}
 $$
holds for another positive constant $C_3$ depending only on $n$,
inequality \eqref{e3.5} then follows from Lemma \ref{l3.1} when $p<-n$.
\end{proof}

As a corollary of Lemma \ref{l3.2}, we have the following result.

\begin{coro}\label{c3.1}
 Let $\Omega^{(p)}$ be the solution obtained in Lemma \ref{p3.1} with the support function $h^{(p)}$, we have
   \begin{equation}\label{e3.8}
     \lim_{p\to-\infty}\min_{{\mathbb{S}}^n}h^{(p)}=1.
   \end{equation}
\end{coro}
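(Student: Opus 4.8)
The plan is to combine the lower bound from Lemma~\ref{l3.2} with a matching upper bound for $\min_{\mathbb S^n} h^{(p)}$, and then show both limits equal $1$. First, observe that Lemma~\ref{l3.2} gives, for $p<-n$,
\[
\min_{\mathbb S^n} h^{(p)} \ \ge\ \Big[(-p)^n\cdot C_n\cdot \gamma_{\max}(\mathcal S)\cdot \tfrac{\max f}{\min f}\Big]^{\frac{1}{n+p}}.
\]
Since the bracketed quantity grows only polynomially in $(-p)$ while the exponent $\frac{1}{n+p}\to 0^-$ as $p\to-\infty$, we have $(-p)^{n/(n+p)}\to 1$ and likewise the remaining constant factor raised to the power $\frac{1}{n+p}$ tends to $1$; hence $\liminf_{p\to-\infty}\min_{\mathbb S^n}h^{(p)}\ge 1$. (Here I would write the bracket as $C\,(-p)^n$ with $C=C(n,\mathcal S,f)$ a fixed positive constant, so the whole thing is $\big(C(-p)^n\big)^{1/(n+p)}=\exp\!\big(\tfrac{\log C + n\log(-p)}{n+p}\big)\to e^0 = 1$.)

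For the reverse inequality I would exploit that $h^{(p)}$ lies in the constraint set $\mathcal K_p(\mathcal S)$, i.e. $\int_{\mathbb S^n} f\,(h^{(p)})^p = \int_{\mathbb S^n} f$. Since $p<0$, the function $t\mapsto t^p$ is decreasing, so at the minimum point we have $(h^{(p)})^p \ge (\min h^{(p)})^p$ only when... more usefully, $\int f (h^{(p)})^p \le \max f \cdot \sigma(\mathbb S^n)\cdot (\min h^{(p)})^p$ would go the wrong way; instead, because the integrand $f(h^{(p)})^p$ integrates to the fixed value $\int f$, and $(h^{(p)})^p \le (\min h^{(p)})^p$ pointwise (as $t^p$ is decreasing and $h^{(p)}\ge \min h^{(p)}$), we get
\[
\int_{\mathbb S^n} f \ =\ \int_{\mathbb S^n} f\,(h^{(p)})^p \ \le\ \Big(\min_{\mathbb S^n} h^{(p)}\Big)^p \int_{\mathbb S^n} f,
\]
which, after dividing by $\int f>0$ and using $p<0$, forces $\min_{\mathbb S^n}h^{(p)} \le 1$. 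Combining with the previous paragraph yields $\lim_{p\to-\infty}\min_{\mathbb S^n}h^{(p)}=1$.

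The main obstacle — which turns out to be no obstacle at all once the constraint is read correctly — is recognizing that the upper bound $\min h^{(p)}\le 1$ is essentially free from membership in $\mathcal K_p(\mathcal S)$, while the delicate point is the lower bound, where one must check that the \emph{polynomial}-in-$(-p)$ growth of the constant in Lemma~\ref{l3.2} is killed by the exponent $\frac{1}{n+p}\to 0$. I would present this as a short $\epsilon$-estimate: given $\epsilon>0$, for $-p$ large enough $\big|\tfrac{\log C + n\log(-p)}{n+p}\big|<\epsilon$, hence $\min h^{(p)} > e^{-\epsilon}$; together with $\min h^{(p)}\le 1$ this gives the claim. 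No further regularity or geometric input is needed beyond Lemmas~\ref{l3.1}--\ref{l3.2} and the definition of the constraint set.
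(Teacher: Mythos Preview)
Your proof is correct. The lower bound via Lemma~\ref{l3.2} is handled exactly as in the paper, and your limit computation $\exp\big(\tfrac{\log C+n\log(-p)}{n+p}\big)\to 1$ is the right way to make it precise.

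Your upper bound, however, differs from the paper's. The paper evaluates the Euler--Lagrange equation \eqref{e1.2} at the minimum point of $h^{(p)}$ and uses the maximum principle (at a minimum, $\det(\nabla^2h+hI)\ge h^n$) to obtain $\min h^{(p)}\le(\max f)^{1/(n-p)}\to 1$. You instead use only membership in the constraint set $\mathcal K_p(\mathcal S)$: from $\int f(h^{(p)})^p=\int f$ and the pointwise inequality $(h^{(p)})^p\le(\min h^{(p)})^p$ for $p<0$, you get $\min h^{(p)}\le 1$ outright, for every $p<0$, not merely in the limit. This is more elementary---it needs no regularity, no PDE, and it cleanly sidesteps the question of whether the maximizer satisfies \eqref{e1.2} exactly or only up to a Lagrange multiplier (Lemma~\ref{p3.1} says ``up to a constant multiplication''). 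The paper's route, by contrast, applies to any solution of \eqref{e1.2} whether or not it lies in $\mathcal K_p(\mathcal S)$, so each argument has its natural scope.
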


\begin{proof} By Lemma \ref{l3.2}, sending $p$ to negative infinity in \eqref{e3.5}, we conclude that
$$
\lim_{p\to-\infty}\min_{x\in{\mathbb{S}}^{n}}{{h^{(p)}}}(x)\geq1.
$$
On the other hand, evaluating the solution ${{h^{(p)}}}$ at its minimal point on the sphere ${\mathbb{S}}^{n}$, it follows from \eqref{e1.2} by the maximum principle that
  $$
   \min_{x\in{\mathbb{S}}^{n}}{{h^{(p)}}}(x)\leq\max_{x\in{\mathbb{S}}^{n}}f^{1/(n-p)}(x).
  $$
Sending $p$ to negative infinity, the desired identity \eqref{e3.8} follows.
\end{proof}

When {\it a-priori} lower-upper bounds of functions $h\in{\mathcal{K}}_{p}({\mathcal{S}})$ have been derived in Lemmas \ref{l3.1} and \ref{l3.2}, by Blaschke's selection theorem, a maximizing sequence $\Omega_j\in{\mathcal{K}}_{p}({\mathcal{S}})$ converges to a limiting convex body ${{\Omega^{(p)}}}\in{\mathcal{K}}_{p}({\mathcal{S}})$. Due to the uniform convergence of the support functions $h_j$ to ${{h^{(p)}}}$, we see that ${{\Omega^{(p)}}}$ is actually a maximizer of the variational problem \eqref{e3.1}. Consequently, we can prove Lemma \ref{p3.1}.

\begin{proof}[Proof of Lemma \ref{p3.1}]
To show Lemma \ref{p3.1}, it remains to verify that ${{h^{(p)}}}$ satisfies the Euler-Lagrange equation \eqref{e1.2}.
This can be achieved by adapting the argument in (\cite{CCL21}, Section 3) (see also \cite{CW06}, Section 5) with slight modifications. First, by calculating the first variation along the Wulff shape, it follows that the Monge-Amp\`{e}re measure associated to the solution is bounded from above. By the duality of the polar bodies, we have also that the Monge-Amp\`{e}re measure is bounded from below. Then, using the work of Caffarelli in \cite{caffarelli1990localization}, we can demonstrate the strict convexity and $C^{1,\alpha}$ regularity of the solution. Consequently, we can show that the maximizer satisfies that
     \begin{equation}\label{e3.7}
       \int_{{\mathbb{S}}^n}\big[\det(\nabla^2h+hI)-fh^{p-1}\big]g=0
     \end{equation}
  for all group invariant function $g$, we thus reach the conclusion that $h=h^{(p)}$ satisfies the Euler-Lagrange equation \eqref{e1.2}. Actually, one needs only to consider group invariant Hilbert space
    $$
     {\mathbb{H}}({\mathcal{S}})\equiv\{g\in L^2({\mathbb{S}}^n)|\ g\mbox{ is group invairant}\}
    $$
  and use the fact that the orthonormal subspace of ${\mathbb{H}}({\mathcal{S}})$ itself must be identical to $\{0\}$.
  Then, \eqref{e3.7} means that the group invariant function $\widetilde{f}:=\det(\nabla^2h+hI)-fh^{p-1}$
  belongs to the orthonormal subspace of ${\mathbb{H}}({\mathcal{S}})$, and hence must be identical to zero.

If $f$ is H\"older continuous,  by  \cite{caffarelli1990interior} we conclude that $h^{(p)}$ is $C^{2,\alpha}$ smooth.
Higher regularity can be obtained by iteration using Schauder's estimate.
This completes the proof of Lemma \ref{p3.1}.
\end{proof}

\vspace{10pt}

\section{ Limiting extremal bodies}

By Theorem \ref{t3.1} and its proof, we have

\begin{theo}\label{t4.1}
 Let ${\mathcal{S}}$ be a discrete subgroup of $O(n+1)$ satisfying the spanning property.
 For each $p<-n-1$, there holds
   \begin{equation}\label{4.1}
      \mathcal{F}_{p}(\Omega)\leq C(n,p,f,{\mathcal{S}}), \ \ \forall\ \Omega\in{\mathcal{K}}_0({\mathcal{S}}).
   \end{equation}
with the best constant
$$C(n,p,f,{\mathcal{S}}):=\sup_{\Omega\in{\mathcal{K}}_{p}({\mathcal{S}})}V(\Omega) .$$
Moreover,
the inequality \eqref{4.1} becomes equality at some smooth group invariant solution ${{h^{(p)}}}$ of \eqref{e1.2},
corresponding to a group invariant convex body ${{\Omega^{(p)}}}\in{\mathcal{K}}_0({\mathcal{S}})$.
\end{theo}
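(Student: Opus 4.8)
The plan is to derive Theorem~\ref{t4.1} as an essentially formal consequence of Theorem~\ref{t3.1} together with the homogeneity properties of the functional $\F_p$. First I would record the scaling behavior: for $\Omega\in\K_0(\S)$ and $t>0$, the support function of $t\Omega$ is $th_\Omega$, so $V(t\Omega)=t^{n+1}V(\Omega)$ while $\int_{\S^n}f(th_\Omega)^p=t^p\int_{\S^n}fh_\Omega^p$. Plugging into \eqref{f1.3}, the factor $t^{n+1}$ from $V$ is cancelled exactly by $\big(t^p\big)^{-(n+1)/p}=t^{-(n+1)}$, so $\F_p(t\Omega)=\F_p(\Omega)$; that is, $\F_p$ is $0$-homogeneous under dilations. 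The same computation shows that if $f$ is $\S$-invariant then $\F_p$ is $\S$-invariant as well, since $V$ and $\int_{\S^n}fh^p$ are both invariant under replacing $\Omega$ by $\phi(\Omega)$ for $\phi\in\S$ (for the second integral one uses that $h_{\phi(\Omega)}(x)=h_\Omega(\phi^{-1}x)$ and the change of variables $x\mapsto\phi^{-1}x$ on $\S^n$ together with $\S$-invariance of $f$).

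Given the $0$-homogeneity, the second step is the reduction of the unconstrained supremum of $\F_p$ over $\K_0(\S)$ to the constrained supremum of $V$ over the slice $\K_p(\S)$. For any $\Omega\in\K_0(\S)$ with $h=h_\Omega$, set
\[
t=\Big(\int_{\S^n}f\Big/\int_{\S^n}fh^p\Big)^{1/p},
\]
which is a well-defined positive number since $p<0$ and $h$ is bounded between positive constants on $\S^n$ (here one uses $\Omega\in\K_0$ so $h>0$, and the polytope lower bound coming from the spanning property, cf.\ Lemma~\ref{l3.1}). Then $\int_{\S^n}f(th)^p=\int_{\S^n}f$, i.e.\ $t\Omega\in\K_p(\S)$, and by the $0$-homogeneity $\F_p(\Omega)=\F_p(t\Omega)$. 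On the slice $\K_p(\S)$ the normalization factor in \eqref{f1.3} equals $1$, so $\F_p(t\Omega)=V(t\Omega)$. Therefore
\[
\sup_{\Omega\in\K_0(\S)}\F_p(\Omega)=\sup_{\Omega\in\K_p(\S)}V(\Omega)=C(n,p,f,\S),
\]
which is precisely \eqref{4.1} with the claimed best constant.

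For the equality statement, I would take the maximizer ${h^{(p)}}$ of \eqref{e3.1} produced by Theorem~\ref{t3.1} (equivalently Lemma~\ref{p3.1}), whose associated convex body ${\Omega^{(p)}}$ lies in $\K_p(\S)\subset\K_0(\S)$ and solves \eqref{e1.2} (and is smooth when $f$ is H\"older, by the Caffarelli regularity invoked there). Since ${\Omega^{(p)}}\in\K_p(\S)$ the normalization factor is again $1$, so $\F_p({\Omega^{(p)}})=V({\Omega^{(p)}})=\sup_{\K_p(\S)}V=C(n,p,f,\S)$, and by the previous paragraph this is the global supremum of $\F_p$ over $\K_0(\S)$; hence \eqref{4.1} is attained at ${h^{(p)}}$. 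Note that the maximizer in \eqref{e3.1} is only determined up to dilation, which is harmless because $\F_p$ is dilation-invariant: every dilate of ${\Omega^{(p)}}$ is also an equality case.

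The argument is short and the only genuine point requiring care — the ``main obstacle'' in the sense of where one must be careful rather than where the work is hard — is the well-definedness and positivity of the rescaling parameter $t$, i.e.\ ensuring $0<\int_{\S^n}fh^p<\infty$ for every $\Omega\in\K_0(\S)$. Upper-boundedness of $h$ is immediate on the slice but for a general $\Omega\in\K_0(\S)$ one needs $h$ bounded away from $0$; since $\Omega$ merely contains the origin, $h$ could a priori vanish, so one must either restrict to $\K_0(\S)$ meaning convex bodies with the origin in the interior, or observe that the spanning property forces $\Omega$ to contain a nondegenerate polytope $P_b$ and hence $h\ge c>0$ on $\S^n$ (Lemma~\ref{l3.1}); with $p<0$ this makes $\int_{\S^n}fh^p$ finite and positive. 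Everything else is bookkeeping with the two homogeneities.
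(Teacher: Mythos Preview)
Your proposal is correct and follows essentially the same approach as the paper: both rescale an arbitrary $\Omega\in\K_0(\S)$ by the factor $\lambda=\big(\int_{\S^n}fh_\Omega^p/\int_{\S^n}f\big)^{1/p}$ (your $t$ is $1/\lambda$) to land in $\K_p(\S)$, observe that $\F_p$ is dilation-invariant so $\F_p(\Omega)=V(\widetilde\Omega)\le C(n,p,f,\S)$, and invoke the maximizer from Theorem~\ref{t3.1} for the equality case. Your write-up is more detailed than the paper's three-line proof, and your discussion of why $h_\Omega$ is bounded away from $0$ (via the spanning property and $\Omega\supset R\,P_b$) is a point the paper leaves implicit; note only that the relevant argument appears in the \emph{proof} of Lemma~\ref{l3.1} rather than in its statement.
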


\begin{proof} For any convex body $\Omega\in{\mathcal{K}}_0({\mathcal{S}})$ with support function $h_\Omega$,
we normalize it to $\widetilde{\Omega}$ with support function
  $$
   \widetilde{h}:= h_\Omega/\lambda\in{\mathcal{K}}_0({\mathcal{S}}),
    \ \ \lambda=\left(\int_{{\mathbb{S}}^{n}}fh_\Omega^p{\Big/}
   \int_{{\mathbb{S}}^{n}}f\right)^{1/p}.
  $$
Then we have
  $$
   \mathcal{F}_{p}(\Omega)=V(\widetilde{\Omega})\leq C(n,p,f,{\mathcal{S}}):=\sup_{\Omega\in{\mathcal{K}}_{p}({\mathcal{S}})}V(\Omega).
  $$
Moreover, the equality holds at multiples of $h^{(p)}$. This completes the proof.
\end{proof}

It is worth noting that when the Santal\'{o} center of $\Omega$ is the origin and when $f=1$,
our functional $\mathcal{F}_{-n-1}(\cdot)$ is exactly  Mahler's volume \cite{Sch14}.
By Lemmas \ref{l3.1} and \ref{l3.2}, the sequence of maximizers ${{h^{(p)}}}$ of the functional is {\it a-priori} bounded
in $Lip({\mathbb{S}}^{n})$ by convexity.
Hence, along a subsequence, the convex body ${{\Omega^{(p)}}}$ converges to
a limiting convex body $\Omega^{(-\infty)}\in{\mathcal{K}}_0({\mathcal{S}})$.
We have the following characterization of $\Omega^{(-\infty)}$.

\begin{lemm}\label{l4.1}
Let ${\mathcal{S}}$ be a discrete subgroup of $O(n+1)$ satisfying the spanning property. The best constant $C(n,p,f,{\mathcal{S}})$ of the functional $\mathcal{F}_{p}(\cdot)$ converges to the best constant $C(n,-\infty,{\mathcal{S}})$ of the limiting functional
     $$
      \mathcal{F}_{-\infty}(\Omega):= V(\Omega)/\min_{{\mathbb{S}}^{n}}h_\Omega^{n+1},
                       \ \ \Omega\in{\mathcal{K}}_0({\mathcal{S}}).
     $$
   Moreover, $\Omega^{(-\infty)}$ is exactly a maximizer of the functional $\mathcal{F}_{-\infty}(\cdot)$.
\end{lemm}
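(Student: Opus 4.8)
The plan is to reduce the claim to an elementary negative-exponent $L^p\!\to\!L^\infty$ limit, together with continuity of the volume functional under Hausdorff convergence. Fix $\Omega\in\mathcal K_0(\mathcal S)$, write $h=h_\Omega$ and $m=\min_{\mathbb S^n}h$; the spanning property forces the origin into the interior of every body in $\mathcal K_0(\mathcal S)$, so $m>0$ and the quantity
$$A_p:=\left(\frac{\int_{\mathbb S^n}fh^p}{\int_{\mathbb S^n}f}\right)^{1/p}$$
is well defined for $p<0$, with $\mathcal F_p(\Omega)=V(\Omega)A_p^{-(n+1)}$ and $\mathcal F_{-\infty}(\Omega)=V(\Omega)m^{-(n+1)}$. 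For $p<0$ one has $h^p\le m^p$ pointwise, hence $\int_{\mathbb S^n}fh^p\le m^p\int_{\mathbb S^n}f$, and taking $p$-th roots (which reverses inequalities) gives $A_p\ge m$. This already yields $\mathcal F_p(\Omega)\le\mathcal F_{-\infty}(\Omega)$ for every $p<-n-1$ and every $\Omega$, so, recalling that $C(n,p,f,\mathcal S)=\sup_{\Omega\in\mathcal K_0(\mathcal S)}\mathcal F_p(\Omega)$ by Theorem~\ref{t4.1}, we obtain $C(n,p,f,\mathcal S)\le C(n,-\infty,\mathcal S)$ for all $p<-n-1$.

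For the reverse asymptotic inequality I would prove the pointwise convergence $\mathcal F_p(\Omega)\to\mathcal F_{-\infty}(\Omega)$ as $p\to-\infty$, i.e.\ $A_p\to m$. Since $A_p\ge m$, it suffices to show $\limsup_{p\to-\infty}A_p\le m$. Given $\varepsilon>0$, continuity of the support function $h$ makes $E_\varepsilon:=\{x\in\mathbb S^n:h(x)<m+\varepsilon\}$ a nonempty open set, so $\sigma(E_\varepsilon)>0$; using $f\ge c_1>0$ from \eqref{fcc} and $p<0$,
$$\int_{\mathbb S^n}fh^p\ \ge\ \int_{E_\varepsilon}fh^p\ \ge\ c_1\,\sigma(E_\varepsilon)\,(m+\varepsilon)^p.$$
Writing $c(\varepsilon):=c_1\sigma(E_\varepsilon)/\int_{\mathbb S^n}f>0$ and taking $p$-th roots gives $A_p\le c(\varepsilon)^{1/p}(m+\varepsilon)\to m+\varepsilon$ as $p\to-\infty$, so $\limsup_{p\to-\infty}A_p\le m$ after letting $\varepsilon\downarrow0$. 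Hence $A_p\to m$ and $\mathcal F_p(\Omega)\to\mathcal F_{-\infty}(\Omega)$. It follows that $\liminf_{p\to-\infty}C(n,p,f,\mathcal S)\ge\liminf_{p\to-\infty}\mathcal F_p(\Omega)=\mathcal F_{-\infty}(\Omega)$ for every $\Omega\in\mathcal K_0(\mathcal S)$, and taking the supremum gives $\liminf_{p\to-\infty}C(n,p,f,\mathcal S)\ge C(n,-\infty,\mathcal S)$. Combined with the first paragraph this proves $\lim_{p\to-\infty}C(n,p,f,\mathcal S)=C(n,-\infty,\mathcal S)$; the common value is finite because, by the bounded ratio condition (Proposition~\ref{p2.1}), any $\mathcal S$-invariant body normalized so that $\min_{\mathbb S^n}h_\Omega=1$ lies in $B_{\gamma_{max}(\mathcal S)}$, whence $\mathcal F_{-\infty}(\Omega)\le(n+1)\,|B_{\gamma_{max}(\mathcal S)}|$.

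It remains to identify $\Omega^{(-\infty)}$ as a maximizer of $\mathcal F_{-\infty}$. Let $p_j\to-\infty$ be the subsequence along which $\Omega^{(p_j)}\to\Omega^{(-\infty)}$ in the Hausdorff metric, equivalently $h^{(p_j)}\to h_{\Omega^{(-\infty)}}$ uniformly on $\mathbb S^n$. Since $\Omega^{(p_j)}$ maximizes $V$ over $\mathcal K_{p_j}(\mathcal S)$ (Theorem~\ref{t3.1}), we have $C(n,p_j,f,\mathcal S)=V(\Omega^{(p_j)})$, and continuity of $V$ under Hausdorff convergence gives $V(\Omega^{(p_j)})\to V(\Omega^{(-\infty)})$; since also $C(n,p_j,f,\mathcal S)\to C(n,-\infty,\mathcal S)$ by the first part, we get $V(\Omega^{(-\infty)})=C(n,-\infty,\mathcal S)$. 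Finally, Corollary~\ref{c3.1} gives $\min_{\mathbb S^n}h^{(p_j)}\to1$, and uniform convergence forces $\min_{\mathbb S^n}h_{\Omega^{(-\infty)}}=1$; therefore $\mathcal F_{-\infty}(\Omega^{(-\infty)})=V(\Omega^{(-\infty)})=C(n,-\infty,\mathcal S)$, i.e.\ $\Omega^{(-\infty)}$ attains the supremum defining $C(n,-\infty,\mathcal S)$.

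The one genuinely delicate step is the asymptotics $A_p\to m$ for a merely bounded measurable $f$. The lower bound $A_p\ge m$---which already gives the easy inequality $C(n,p,f,\mathcal S)\le C(n,-\infty,\mathcal S)$---is immediate, but the matching upper estimate uses both the hypothesis $f\ge c_1>0$, so that the set where $h$ is close to its minimum is not invisible to the measure $f\,d\sigma$, and the continuity of $h$, so that $\{h<m+\varepsilon\}$ is open and hence of positive spherical measure. Everything else---continuity of the volume under Hausdorff convergence, the identity $d_H(\Omega_1,\Omega_2)=\|h_{\Omega_1}-h_{\Omega_2}\|_{L^\infty(\mathbb S^n)}$, and the input $\min_{\mathbb S^n}h^{(p)}\to1$ from Corollary~\ref{c3.1}---is standard or already established.
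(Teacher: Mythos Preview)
Your proof is correct and follows the same overall structure as the paper's: establish $\mathcal{F}_p\le\mathcal{F}_{-\infty}$ pointwise together with the $L^p\to L^\infty$ convergence to get $C(n,p,f,\mathcal S)\to C(n,-\infty,\mathcal S)$, then identify the limit body as a maximizer. The one noteworthy difference is in the maximizer step. The paper shows $\mathcal{F}_p(\Omega^{(-\infty)})-\mathcal{F}_p(\Omega^{(p)})=o_p(1)$ via a Minkowski-inequality bound on $\|1/h^{(p)}-1/h^{(-\infty)}\|_{L^{|p|}_f}$, then chains
$\mathcal{F}_{-\infty}(\Omega^{(-\infty)})=\mathcal{F}_p(\Omega^{(-\infty)})+o_p(1)=\mathcal{F}_p(\Omega^{(p)})+o_p(1)=C(n,p,f,\mathcal S)+o_p(1)$.
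You instead exploit the normalization $\Omega^{(p)}\in\mathcal K_p(\mathcal S)$ to write $C(n,p,f,\mathcal S)=V(\Omega^{(p)})$ directly, pass to the limit in the volume, and separately invoke Corollary~\ref{c3.1} to obtain $\min_{\mathbb S^n} h_{\Omega^{(-\infty)}}=1$. Your route is slightly cleaner and avoids the $L^{|p|}$ estimate, at the cost of importing the maximum-principle input from Corollary~\ref{c3.1}; the paper's argument stays entirely at the variational level.
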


\begin{proof}
For each $p<-n$, we have
   \begin{equation}\label{e4.2}
     \mathcal{F}_{p}(\Omega)\leq \mathcal{F}_{-\infty}(\Omega)\Rightarrow C(n,p,f,{\mathcal{S}})\leq C(n,-\infty,{\mathcal{S}}).
   \end{equation}
Conversely, for each $\Omega\in{\mathcal{K}}_0({\mathcal{S}})$, by the convergence property of $L^p$-norm to $L^\infty$-norm, we have
  \begin{equation}\label{e4.3}
   \mathcal{F}_{-\infty}(\Omega)=\lim_{p\to-\infty}\mathcal{F}_{p}(\Omega)\Rightarrow C(n,-\infty,{\mathcal{S}})\leq\lim_{p\to-\infty}C(n,p,f,{\mathcal{S}}).
  \end{equation}
Combining \eqref{e4.3} with \eqref{e4.2} yields
   \begin{equation}\label{e4.4}
     C(n,-\infty,{\mathcal{S}})=\lim_{p\to-\infty}C(n,p,f,{\mathcal{S}}).
   \end{equation}
On the other hand,
by the uniform convergence of $1/{{h^{(p)}}}$ to $1/h^{(-\infty)}$,
 and the condition \eqref{fcc}, we have
  \begin{eqnarray*}
   &&\left[\frac{1}{(n+1)\omega_{n+1}}
   \int_{{\mathbb{S}}^{n}}f|1/{{h^{(p)}}}-1/h^{(-\infty)}|^{-p}\right]^{-1/p}\\
   &&\hskip+45pt\leq\max_{{\mathbb{S}}^{n}}f^{-1/p}||1/{{h^{(p)}}}-1/h^{(-\infty)}||_{L^\infty}
   =o_p(1),
  \end{eqnarray*}
where the infinitesimal $o_p(1)$ is small as long as $p$ is negative large. As a result,
 \begin{align*}
   \mathcal{F}_{-\infty}(\Omega^{(-\infty)})&=\mathcal{F}_{p}(\Omega^{(-\infty)})+o_p(1)
   =\mathcal{F}_{p}({{\Omega^{(p)}}})+o_p(1)\\
    &=C(n,p,f,{\mathcal{S}})+o_p(1)=C(n,-\infty,{\mathcal{S}})+o_p(1)
 \end{align*}
by \eqref{e4.4}. This implies that $\Omega^{(-\infty)}$ is exactly a maxmizer of the functional $\mathcal{F}_{-\infty}(\cdot)$.
\end{proof}

The next lemma rules out the trivial possibility of the limiting sphere ${\mathbb{S}}^{n}$.

\begin{lemm}\label{l4.2}
  Let $P\subset {B_1}$ be any group ${\mathcal{S}}$ invariant polytope, then
    \begin{equation}\label{e4.5}
     \mathcal{F}_{-\infty}(P)>\mathcal{F}_{-\infty}(B_1).
    \end{equation}
 Consequently, the limiting shape $\Omega^{(-\infty)}$ of ${{\Omega^{(p)}}}$,
 when $p\to-\infty$,  cannot be a sphere.
\end{lemm}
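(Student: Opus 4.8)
The plan is to compare the two functionals directly. Recall that $\mathcal{F}_{-\infty}(\Omega) = V(\Omega)/\min_{\mathbb{S}^n} h_\Omega^{n+1}$, and note that $V(\Omega) = (n+1)|\Omega|$, so after cancelling the factor $(n+1)$ it suffices to compare $|\Omega|/\min_{\mathbb{S}^n} h_\Omega^{n+1}$. For the unit ball $B_1$ we have $h_{B_1}\equiv 1$ and $|B_1| = \omega_{n+1}$, so $\mathcal{F}_{-\infty}(B_1) = (n+1)\omega_{n+1}$. Now let $P\subset B_1$ be a group $\mathcal{S}$-invariant polytope. First I would observe that we may assume $P$ has nonempty interior (otherwise $V(P)=0$ and there is nothing to prove unless $P$ is lower-dimensional, in which case the inequality is trivial or $\mathcal{F}_{-\infty}$ is not well-defined; in the relevant regime $P$ comes as a limit $\Omega^{(-\infty)}$ and is genuinely $(n+1)$-dimensional by Lemma \ref{l3.2} and Corollary \ref{c3.1}). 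Since $P\subset B_1$, its support function satisfies $h_P\le 1$ everywhere, hence $m:=\min_{\mathbb{S}^n} h_P \le 1$.

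The key point is a scaling argument. Set $\lambda = 1/m \ge 1$ and consider the dilated polytope $\lambda P$. Then $h_{\lambda P} = \lambda h_P$, so $\min_{\mathbb{S}^n} h_{\lambda P} = \lambda m = 1$, and $|\lambda P| = \lambda^{n+1}|P|$. Therefore
\[
\mathcal{F}_{-\infty}(P) = \frac{(n+1)|P|}{m^{n+1}} = (n+1)\lambda^{n+1}|P| = (n+1)|\lambda P|.
\]
Thus the claim $\mathcal{F}_{-\infty}(P) > \mathcal{F}_{-\infty}(B_1)$ reduces to showing $|\lambda P| > \omega_{n+1} = |B_1|$. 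By construction $\lambda P$ is a polytope whose in-radius (distance from origin to nearest face) equals $1$, i.e. $\lambda P \supseteq B_1$. Moreover $\lambda P$ is not equal to $B_1$ — it is a genuine polytope, so it has a vertex at distance strictly greater than $1$ from the origin (a bounded polytope cannot have all its support values equal to $1$ unless it is the ball, which it is not). Hence $\lambda P \supsetneq B_1$ with the containment strict on an open set, which gives $|\lambda P| > |B_1|$, and therefore $\mathcal{F}_{-\infty}(P) > \mathcal{F}_{-\infty}(B_1)$.

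For the final sentence of the lemma: if $\Omega^{(-\infty)}$ were a sphere, then since $\Omega^{(-\infty)}$ is $\mathcal{S}$-invariant and (by Corollary \ref{c3.1}) satisfies $\min_{\mathbb{S}^n} h_{\Omega^{(-\infty)}} = 1$, it would have to be $B_1$ itself, giving $\mathcal{F}_{-\infty}(\Omega^{(-\infty)}) = \mathcal{F}_{-\infty}(B_1)$. But by Lemma \ref{l4.1}, $\Omega^{(-\infty)}$ is a maximizer of $\mathcal{F}_{-\infty}$ over $\mathcal{K}_0(\mathcal{S})$, and the spanning property guarantees that $\mathcal{K}_0(\mathcal{S})$ contains group-invariant polytopes $P\subset B_1$ (e.g. $P_a = \mathrm{conv}\{\phi(a):\phi\in\mathcal{S}\}$ for a suitable $a$, which is $(n+1)$-dimensional by the spanning property and can be scaled to lie in $B_1$); for such $P$ the strict inequality \eqref{e4.5} contradicts maximality. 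Hence $\Omega^{(-\infty)}$ cannot be a sphere.

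I expect the only delicate point to be the elementary geometric fact that a group-invariant polytope contained in $B_1$, once dilated to have in-radius $1$, strictly contains $B_1$ with strictly larger volume — this is intuitively obvious but should be phrased carefully: a compact convex polytope whose support function is constant $\equiv 1$ must be $B_1$, which is not a polytope, so any genuine polytope circumscribing $B_1$ strictly contains it. The scaling normalization that turns the ratio $\mathcal{F}_{-\infty}$ into a pure volume comparison is the mechanism that makes the whole argument transparent.
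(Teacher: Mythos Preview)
Your proof is correct and follows essentially the same approach as the paper: both arguments exploit the scale invariance of $\mathcal{F}_{-\infty}$ to reduce the inequality to a strict volume comparison between a polytope and its inscribed ball. The only cosmetic difference is that the paper scales the ball down to $mB_1\subset P$ while you scale the polytope up to $\lambda P\supset B_1$; the conclusion for $\Omega^{(-\infty)}$ via Lemma~\ref{l4.1} is likewise the same.
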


\begin{proof}
Let $P$ be a group ${\mathcal{S}}$ invariant polytope.
Setting $m = \min h_{P}$ to be the inner-radius of $P$, we have $m {B_1}$ as a smaller ball contained inside and tangential to $P$. Hence, the desired inequality \eqref{e4.5} follows from
  $$
   \frac{\mathcal{F}_{-\infty}(P)}{\mathcal{F}_{-\infty}({B_1})}
   =\frac{\mathcal{F}_{-\infty}(P)}{\mathcal{F}_{-\infty}(m {B_1})}
   =\frac{V(P)/m^{n+1}}{V(m {B_1})/m^{n+1}}>1.
  $$
By Lemma \ref{l4.1}, the limiting shape $\Omega^{(-\infty)}$ is a maximizer of the functional $\mathcal{F}_{-\infty}(\cdot)$. Thus, it can not be a ball by \eqref{e4.5}.
\end{proof}

The next lemma implies that the maximizer of the limiting functional must be a polytope.

\begin{lemm}\label{l4.3}
  For any non-polytope group invariant convex body $\Omega\in{\mathcal{K}}_0({\mathcal{S}})$, there exists a group ${\mathcal{S}}$ invariant polytope $P$ containing $\Omega$ such that
   \begin{equation}
       \mathcal{F}_{-\infty}(\Omega)<\mathcal{F}_{-\infty}(P).
     \end{equation}
\end{lemm}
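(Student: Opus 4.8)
The goal is to show that any group invariant convex body $\Omega \in \mathcal{K}_0(\mathcal{S})$ that is \emph{not} a polytope can be strictly outperformed, with respect to the functional $\mathcal{F}_{-\infty}(\Omega) = V(\Omega)/\min_{\mathbb{S}^n} h_\Omega^{n+1}$, by some $\mathcal{S}$-invariant polytope $P$ that \emph{contains} $\Omega$. The key point is that enlarging $\Omega$ to $P \supset \Omega$ can only increase the volume $V$, so $V(P) \geq V(\Omega)$, while the denominator $\min_{\mathbb{S}^n} h_\Omega^{n+1}$ is unchanged provided $\min_{\mathbb{S}^n} h_P = \min_{\mathbb{S}^n} h_\Omega$ — i.e. provided the enlargement does not touch the inner ball. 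To get a \emph{strict} inequality we need the volume to strictly increase, which forces us to add a genuinely $(n+1)$-dimensional chunk to $\Omega$.

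\medskip

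\textbf{Plan.} Let $m := \min_{\mathbb{S}^n} h_\Omega$, so the ball $m B_1$ is inscribed in $\Omega$ and tangent to $\partial\Omega$. Since $\Omega$ is not a polytope, there is a boundary point $z_0 \in \partial\Omega$ and a direction $x_0 \in \mathbb{S}^n$ such that $\partial\Omega$ is not flat near $z_0$ in the supporting hyperplane $H_{x_0} = \{z : z\cdot x_0 = h_\Omega(x_0)\}$; equivalently, there is no neighborhood of $z_0$ in $\partial\Omega$ that is contained in a single hyperplane. I would distinguish whether $h_\Omega(x_0) = m$ (the supporting hyperplane touches the inner ball) or $h_\Omega(x_0) > m$. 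In the latter case one directly intersects $\Omega$ with finitely many half-spaces $\{z\cdot \phi(x_0) \le h_\Omega(x_0) + \delta'\}$ over $\phi \in \mathcal{S}$ — wait, that enlarges; instead one \emph{adds} the region: the right move is to take $P$ to be the intersection of half-spaces $\{z : z\cdot x \le c(x)\}$ where $c(x) = \max(h_\Omega(x), \text{slightly lifted values near the orbit of } x_0)$, chosen $\mathcal{S}$-invariantly, so that $P \supsetneq \Omega$, $P$ is a polytope, $\min h_P = m$ still (choose the lift small enough that none of the new support values drop the minimum), and $|P| > |\Omega|$. The $\mathcal{S}$-invariance is arranged automatically by defining the new supporting data symmetrically over the orbit $\{\phi(x_0) : \phi \in \mathcal{S}\}$, which is finite since $\mathcal{S}$ is a finite group.

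\medskip

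\textbf{Execution of the volume gain.} The cleanest route: since $\Omega$ is not a polytope, $\Omega$ cannot be written as a finite intersection of half-spaces. Consider the polytopes $P_k := \bigcap_{j=1}^{k} \{z : z\cdot x_j \le h_\Omega(x_j)\}$ for a dense sequence $x_j \in \mathbb{S}^n$; each $P_k \supseteq \Omega$, $P_k \downarrow \Omega$, and $\min h_{P_k} \to m$ (indeed $\min h_{P_k} = m$ for $k$ large once some $x_j$ is close to the contact normal, or one can just note $\min h_{P_k} \ge m$ always and $h_{P_k} \ge h_\Omega$ gives $\min h_{P_k} \ge m$; combined with $P_k \supseteq \Omega \supseteq m B_1$ we get $\min h_{P_k} \ge m$, and for the ratio we only need a lower bound on the denominator that is $\le$ the actual one — actually we want $\min h_{P_k} = m$, which holds as soon as the inscribed ball $mB_1$ is still tangent to $\partial P_k$, true for all $k$ since $mB_1 \subseteq \Omega \subseteq P_k$ and the tangency normal of $mB_1$ to $\partial\Omega$ is realized in $\partial P_k$ too once a nearby $x_j$ is included). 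Since $\Omega$ is not a polytope, $|P_k| > |\Omega|$ for every $k$ (if $|P_k| = |\Omega|$ then $P_k = \Omega$ as $P_k \supseteq \Omega$ are convex bodies, contradicting that $\Omega$ is not a polytope). Finally symmetrize: replace $P_k$ by $\tilde P_k := \bigcap_{\phi\in\mathcal{S}} \phi(P_k)$, which is an $\mathcal{S}$-invariant polytope, still contains $\Omega$ (as $\Omega$ is $\mathcal{S}$-invariant), still has $\min h_{\tilde P_k} = m$ (the inscribed ball $m B_1$ is $\mathcal{S}$-invariant, hence inscribed in each $\phi(P_k)$, hence in $\tilde P_k$, with the same contact normal up to the group action — and $\min h_{\tilde P_k}\ge m$ since $\tilde P_k \supseteq \Omega$, with equality from tangency), and has $|\tilde P_k| \ge |\Omega|$; one then checks $|\tilde P_k| > |\Omega|$ for $k$ large because the non-flatness of $\partial\Omega$ near $z_0$ is not removed by the finitely many reflections until $k$ is chosen large enough to separate a small cap near the $\mathcal{S}$-orbit of $z_0$. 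Then $\mathcal{F}_{-\infty}(\tilde P_k) = V(\tilde P_k)/m^{n+1} > V(\Omega)/m^{n+1} = \mathcal{F}_{-\infty}(\Omega)$, and we take $P = \tilde P_k$ for such a $k$.

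\medskip

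\textbf{Main obstacle.} The routine parts are the monotonicity $V(P) \ge V(\Omega)$ and the invariance under symmetrization. The delicate point is ensuring simultaneously that (a) the enlargement strictly increases the volume and (b) it does not decrease $\min h$, i.e. the added polyhedral caps must be placed strictly \emph{away} from the inscribed ball $m B_1$. This requires locating the non-flatness of $\partial\Omega$ at a point whose supporting hyperplane is at distance $> m$ from the origin — or, if all the non-flatness sits exactly on the sphere $\partial(mB_1)\cap\partial\Omega$, handling that case by a more careful local argument (the contact set of a convex body with its inscribed ball has empty interior in $\partial\Omega$ unless $\Omega = mB_1$, which is excluded here by Lemma~\ref{l4.2}, so there is always a non-flat point strictly outside $mB_1$). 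I expect the bookkeeping to reduce to this observation: the portion of $\partial\Omega$ lying at distance exactly $m$ from the origin is contained in $\partial(mB_1)$ and hence contains no open subset of $\partial\Omega$, so a non-flat point at distance $>m$ exists and the construction goes through.
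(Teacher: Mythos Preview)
Your overall strategy is right—enlarge $\Omega$ to an $\mathcal{S}$-invariant polytope $P\supsetneq\Omega$ while keeping the inscribed ball—but the execution is over-complicated and the analysis of $\min h$ is backwards, which produces a genuine gap. Since $P\supseteq\Omega$, one always has $\min_{\mathbb{S}^n}h_P\ge m:=\min_{\mathbb{S}^n}h_\Omega$; the danger is not that $\min h$ decreases (it cannot) but that it \emph{increases}, inflating the denominator of $\mathcal{F}_{-\infty}(P)$ and possibly killing the inequality. Your claim that ``the tangency normal of $mB_1$ to $\partial\Omega$ is realized in $\partial P_k$ once a nearby $x_j$ is included'' is false: if $x_j$ is close to, but not equal to, a contact normal, then the corresponding face of $P_k$ lies at distance $h_\Omega(x_j)>m$ from the origin, so $mB_1$ is not tangent to $\partial P_k$ and $\min h_{P_k}>m$ strictly. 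The ``main obstacle'' paragraph compounds the confusion by trying to place the added caps \emph{away} from the inscribed ball, when what is actually needed is precisely one face of $P$ that \emph{touches} it.

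The paper's argument is a one-line construction that sidesteps all of this. Choose $x_0\in\partial\Omega$ to be a contact point of the inscribed ball $B_{r_\Omega}$ (so the tangent hyperplane $H(x_0)$ lies at distance exactly $r_\Omega=m$ from the origin), and set
\[
P=P_{x_0}:=\bigcap_{\phi\in\mathcal{S}}\phi\bigl(H^-(x_0)\bigr).
\]
Every face of $P_{x_0}$ is an $O(n+1)$-image of $H(x_0)$ and hence at distance $m$ from the origin, so $\min h_{P_{x_0}}=m$ exactly; and $P_{x_0}\supsetneq\Omega$ (since $\Omega$ is not a polytope) gives $V(P_{x_0})>V(\Omega)$. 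Thus $\mathcal{F}_{-\infty}(P_{x_0})/\mathcal{F}_{-\infty}(\Omega)=V(P_{x_0})/V(\Omega)>1$. No dense sequence, no separate symmetrization step, no limiting argument. Your construction would in fact collapse to exactly this if you took $x_1$ to be a contact normal and stopped at $k=1$.
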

\begin{proof}
Let $x_0\in\partial\Omega$ and $B_{r_\Omega}\subset\Omega$ for the inner-radius $r_\Omega$ of $\Omega$, we denote $H(x_0)$ to be the tangent hyperplane of $\partial\Omega$ at $x_0$, and let $H^-(x_0)$ be the half space of $H(x_0)$ containing $\Omega$. Define
$$
P_{x_0}=\cap_{\phi\in{\mathcal{S}}}\phi(H^-(x_0)).
$$
Obviously, $P_{x_0}$ is a group ${\mathcal{S}}$ invariant polytope containing $\Omega$, then
\begin{eqnarray*}
    \frac{\mathcal{F}_{-\infty}(P_{x_0})}{\mathcal{F}_{-\infty}(\Omega)}
    =\frac{V(P_{x_0})}{V(\Omega)}>1,
  \end{eqnarray*}
as long as $\Omega$ is not a polytope.
\end{proof}

Combining with Theorems \ref{t3.1} and \ref{t4.1}, along with Lemma \ref{l4.3}, we obtain the following theorem, which characterizes the limiting extremal body as a maximal group-invariant polytope.

\begin{theo}\label{t4.2}
 Let ${\mathcal{S}}$ be a discrete subgroup of $O(n+1)$ satisfying the spanning property.
 The maximizer $\Omega^{(-\infty)}$ of the functional $\mathcal{F}_{-\infty}(\cdot)$ must be a group invariant polytope,
 after scaling such that the volume $|\Omega^{(-\infty)}|>0$.
\end{theo}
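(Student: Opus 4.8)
The plan is to combine the results already established in Sections~3 and~4 with a short additional argument that upgrades "maximizer of $\mathcal{F}_{-\infty}$" to "maximizer which is a polytope of positive volume." First I would recall that, by Lemma~\ref{l4.1}, $\Omega^{(-\infty)}$ is a maximizer of $\mathcal{F}_{-\infty}(\cdot)$ over $\mathcal{K}_0(\mathcal{S})$, and by Lemma~\ref{l4.2} it is not a ball, so in particular $\Omega^{(-\infty)}\supsetneq B_1$ cannot be ruled out but the sphere itself is excluded. The main point is then Lemma~\ref{l4.3}: if $\Omega^{(-\infty)}$ were \emph{not} a polytope, then there would exist a group-invariant polytope $P\supset\Omega^{(-\infty)}$ with $\mathcal{F}_{-\infty}(P)>\mathcal{F}_{-\infty}(\Omega^{(-\infty)})$, contradicting maximality. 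Hence $\Omega^{(-\infty)}$ must be a group-invariant polytope.

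The remaining issue is the degenerate possibility that the maximizing sequence $\Omega^{(p)}$ collapses so that the limit has empty interior, i.e. $|\Omega^{(-\infty)}|=0$; this is why the statement says "after scaling such that $|\Omega^{(-\infty)}|>0$." I would handle this by exploiting the scale invariance of $\mathcal{F}_{-\infty}$: since $\mathcal{F}_{-\infty}(\lambda\Omega)=\mathcal{F}_{-\infty}(\Omega)$ for all $\lambda>0$ (both $V$ and $\min h^{n+1}$ scale like $\lambda^{n+1}$), one normalizes, say by requiring $\min_{\mathbb{S}^n}h_{\Omega^{(p)}}=1$ along the sequence. With this normalization Lemma~\ref{l3.1} (or the bounded ratio condition of the spanning property, Proposition~\ref{p2.1}) gives a uniform upper bound on $h_{\Omega^{(p)}}$, and the bounded ratio condition gives a uniform lower bound bounded away from zero; Blaschke selection then produces a nondegenerate limit $\Omega^{(-\infty)}$ with $\min h=1$, hence $|\Omega^{(-\infty)}|>0$. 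It is crucial here that the spanning property prevents the normalized bodies from becoming flat: it forces $P_b\subset\Omega^{(p)}$ to be a genuine $(n+1)$-dimensional polytope with inradius controlled from below, so the limit cannot degenerate to a lower-dimensional set.

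Putting the pieces together, the proof runs as follows. Given the sequence $h^{(p)}$ of maximizers from Theorem~\ref{t3.1}, rescale each so that $\min_{\mathbb{S}^n}h^{(p)}=1$; by the uniform estimates coming from the spanning property, extract a subsequence converging uniformly to a support function $h^{(-\infty)}$ of a convex body $\Omega^{(-\infty)}\in\mathcal{K}_0(\mathcal{S})$ with $\min h^{(-\infty)}=1$ and therefore $|\Omega^{(-\infty)}|>0$. By Lemma~\ref{l4.1}, $\Omega^{(-\infty)}$ maximizes $\mathcal{F}_{-\infty}$ over $\mathcal{K}_0(\mathcal{S})$. If $\Omega^{(-\infty)}$ were not a polytope, Lemma~\ref{l4.3} would produce a group-invariant polytope $P\supset\Omega^{(-\infty)}$ with $\mathcal{F}_{-\infty}(P)>\mathcal{F}_{-\infty}(\Omega^{(-\infty)})$, a contradiction. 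Therefore $\Omega^{(-\infty)}$ is a group-invariant polytope.

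I expect the genuinely delicate step to be the nondegeneracy of the limit, that is, ensuring that after the scale normalization the bodies do not degenerate to lower dimension; everything rests on the spanning property through the bounded ratio condition, so I would be careful to invoke Proposition~\ref{p2.1} and Lemma~\ref{l3.1} precisely and to note that $\min h^{(-\infty)}=1$ forces $B_1\subset\Omega^{(-\infty)}$, which already gives positive volume. The rest is a clean concatenation of Lemmas~\ref{l4.1}--\ref{l4.3}, all of which are available from the excerpt.
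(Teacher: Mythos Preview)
Your proposal is correct and follows essentially the same approach as the paper: the paper states Theorem~\ref{t4.2} as an immediate consequence of Theorem~\ref{t3.1}, Theorem~\ref{t4.1}, and Lemma~\ref{l4.3}, which is exactly the concatenation you describe. The only cosmetic difference is that you normalize directly by $\min_{\mathbb{S}^n}h^{(p)}=1$, whereas the paper works in $\mathcal{K}_p(\mathcal{S})$ and invokes Corollary~\ref{c3.1} to get $\min h^{(p)}\to 1$ automatically; either route handles the nondegeneracy clause.
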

\begin{proof}[Proof of Theorem \ref{t1.1}]
Theorem \ref{t1.1} follows from Theorem \ref{t4.2} and Corollary \ref{c3.1}.
\end{proof}

\vspace{10pt}
\section{Local maximizer of the functional and proof of Theorem \ref{t1.2}}

In this section, we use $T\subset \mathbb{R}^{n+1}$ to denote a regular polytope that is tangential to the unit sphere ${\mathbb{S}}^n$, and $\mathcal{S}_T\subset O(n+1)$ to represent the symmetry group of $T$.
Denote
   $$
    {\mathcal{N}}_\delta(T)=\big\{\Omega\in{\mathcal{K}}_0({\mathcal{S}}_T)\ \big|\ dist(\Omega,T)<\delta\big\}
   $$
the $\delta$-neighborhood of $T$ for a positive constant $\delta$ and the Hausdorff distance $dist(\cdot,\cdot)$,
and denote $\p  {\mathcal{N}}_\delta(T)$ the boundary of $ {\mathcal{N}}_\delta(T)$.
Denote also the normalized group invariant family
    $$
     {\mathcal{K}}_{min}({\mathcal{S}}_T)=\big\{\Omega\in{\mathcal{K}}_0({\mathcal{S}}_T)
     \ \big|\ \inf_{{\mathbb{S}}^n}h_\Omega=1\big\},
    $$
and the normalized $\delta$-neighborhood
   $$
    {\mathcal{K}}_{min}(\delta,{\mathcal{S}}_T)={\mathcal{K}}_{min}({\mathcal{S}}_T)\cap{\mathcal{N}}_\delta(T)
   $$
of $T$. Then, we can prove that $T$ has strictly maximal volume in the normalized family ${\mathcal{K}}_{min}(\delta,{\mathcal{S}}_T)$ for some $\delta=\delta_T>0$ as follows.
\begin{lemm}\label{l5.1}
  For any regular polytope $T\subset{\mathbb{R}}^{n+1}$ tangential to the unit sphere ${\mathbb{S}}^n$, there exists a small constant $\delta=\delta_T>0$ such that
     \begin{equation}\label{e5.1}
       V(T)>V(\Omega), \ \ \forall \ \Omega\in{\mathcal{K}}_{min}(\delta,{\mathcal{S}}_T)\setminus\{T\}.
     \end{equation}
\end{lemm}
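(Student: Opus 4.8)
The plan is to show that $T$ is a strict local maximizer of the volume functional $V$ restricted to the normalized family $\mathcal K_{min}(\delta,\mathcal S_T)$, by combining a first-variation argument with the strong symmetry of $T$. The key observation is that, after normalization $\inf_{\S^n}h_\Omega=1$, every $\Omega\in\mathcal K_{min}(\delta,\mathcal S_T)$ contains the unit ball $B_1$, and $T$ itself is the ``maximal'' such body in the sense that $T$ is obtained from $B_1$ by intersecting supporting half-spaces tangent to $\S^n$; so one expects that any competitor near $T$ either coincides with $T$ or has strictly smaller volume. The first step is to set up the perturbation: write a competitor $\Omega\in\mathcal K_{min}(\delta,\mathcal S_T)$ with support function $h = h_T + \varphi$ where $\varphi$ is $\mathcal S_T$-invariant, small in $C^0$, and subject to the normalization constraint $\inf_{\S^n} h = 1$. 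Because $h_T=1$ exactly on the (finitely many) normal directions $x_1,\dots,x_N$ of the facets of $T$ (these are the points where $\S^n$ touches $\partial T$), the normalization forces $\varphi(x_i)\ge 0$ for each $i$, i.e. $\varphi\ge 0$ at the facet-normals.

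Next I would compute the first variation of $V$ at $T$ along such a perturbation. Since $V(\Omega)=(n+1)|\Omega|$ and $dV|_{h}(\varphi) = (n+1)\int_{\S^n}\varphi\, dS_\Omega$ where $dS_\Omega=\det(\nabla^2 h+hI)\,d\sigma$ is the surface area measure, at $\Omega=T$ the measure $S_T$ is supported on the facet-normals $\{x_1,\dots,x_N\}$, with $S_T(\{x_i\})=\operatorname{area}(F_i)>0$ (the facet areas). Hence
\[
 \frac{d}{dt}\Big|_{t=0} V(T+t\varphi) = (n+1)\sum_{i=1}^N \operatorname{area}(F_i)\,\varphi(x_i) \le 0,
\]
using $\varphi(x_i)\ge 0$ and — crucially — that we must \emph{decrease} in those directions to stay admissible only if... wait: actually $\varphi(x_i)\ge 0$ makes this sum $\ge 0$. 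The point is subtler: one must also use that to enlarge $T$ near a facet one must push the facet outward, but then the \emph{regularity/symmetry} of $T$ (all facets congruent, transitive symmetry group) means that an $\mathcal S_T$-invariant $\varphi$ cannot be strictly positive at the $x_i$ while keeping $\Omega$ convex and close to $T$ — because enlarging one facet symmetrically enlarges all, and the resulting body, being $\mathcal S_T$-invariant and convex with all supporting planes still tangent-ish, would have to be a rescaling of $T$, contradicting $\inf h=1$ unless $\varphi\equiv 0$ there. So I would argue: either $\varphi(x_i)=0$ for all $i$, in which case a second-order (strict concavity) analysis of $V$ transverse to the facet directions gives $V(\Omega)<V(T)$ unless $\Omega=T$; or some $\varphi(x_i)>0$, handled by showing the volume gain from pushing facets out is beaten by the volume constraint from convexity plus the requirement $\Omega\in\mathcal K_0(\mathcal S_T)$ with $\inf h=1$ — geometrically, pushing a facet out forces, by convexity and the fixed inscribed sphere, the neighboring structure to recede.

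Cleaner route, which I would actually adopt: reduce to the already-proven limiting functional. By Lemma~\ref{l4.3} and Theorem~\ref{t4.2}, within $\mathcal N_\delta(T)$ the only critical points of $\mathcal F_{-\infty}(\Omega)=V(\Omega)/\min_{\S^n}h_\Omega^{n+1}$ that are convex bodies are polytopes, and on $\mathcal K_{min}(\mathcal S_T)$ we have $\mathcal F_{-\infty}=V$. So it suffices to show $T$ is the unique $\mathcal S_T$-invariant polytope in $\mathcal N_\delta(T)\cap\mathcal K_{min}(\mathcal S_T)$ of maximal volume. A polytope $P$ near $T$ with $\inf h_P=1$ has all its facet-normals near those of $T$; by $\mathcal S_T$-invariance its combinatorial type is forced (for $\delta$ small, no facets can split or merge), so $P$ is determined by the single distance of one facet to the origin, which by $\inf h_P=1$ and convexity equals $1$ — that is $P=T$. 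The strictness in \eqref{e5.1} for non-polytopal $\Omega$ then follows directly from Lemma~\ref{l4.3}, and for polytopal $\Omega\ne T$ in the family there are none.

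\textbf{Main obstacle.} The delicate point is the rigidity step: showing that for $\delta$ small enough, \emph{every} $\mathcal S_T$-invariant convex body (not just polytope) with $\inf_{\S^n}h=1$ and Hausdorff-distance $<\delta$ to $T$ which is not $T$ itself has strictly smaller volume — equivalently, that no such body can ``bulge'' past $T$. This requires quantitative control: one must rule out trading a small outward bulge on part of $\partial\Omega$ (away from the tangency points $x_i$, where the constraint $\inf h=1$ is not active) against the constraint, and here the key input is that the symmetry group $\mathcal S_T$ acts transitively on facets, so any outward bulge is replicated on all $N$ facets symmetrically; combined with convexity and $B_1\subset\Omega$, the only $\mathcal S_T$-invariant convex body containing $B_1$, tangent to $\S^n$, and with supporting planes in a $\delta$-cone of those of $T$ that is not contained in $T$ must fail $\inf h=1$. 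I would formalize this by a compactness/contradiction argument: if not, take $\Omega_j\to T$ with $V(\Omega_j)\ge V(T)$, $\Omega_j\ne T$; extract a limit, show the limit must be $T$ (by the uniqueness of the maximal $\mathcal S_T$-invariant polytope), and derive a contradiction from a second-order expansion of $V$ at $T$ along the (normalized) difference directions $\varphi_j = h_{\Omega_j}-h_T$, using that the Hessian of $V$ is negative-definite on the $\mathcal S_T$-invariant directions transverse to the finite-dimensional space of ``rescale the facet'' moves, which are themselves excluded by the normalization.
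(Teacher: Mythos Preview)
Your ``cleaner route'' has a genuine gap: the combinatorial rigidity claim is false. There are infinitely many $\mathcal{S}_T$-invariant polytopes $P\ne T$ in $\mathcal{K}_{min}(\mathcal{S}_T)$ arbitrarily close to $T$ in Hausdorff distance. For instance, truncate every vertex of $T$ by a hyperplane orthogonal to the vertex direction at distance $d>1$ from the origin; the resulting $\mathcal{S}_T$-invariant polytope has more facets than $T$, satisfies $B_1\subset P\subset T$ (hence $\min h_P=1$), and tends to $T$ as $d$ tends to the circumradius. So ``for polytopal $\Omega\ne T$ in the family there are none'' is simply wrong, and Lemma~\ref{l4.3} does not close the loop because the polytope it produces need not lie in $\mathcal{N}_\delta(T)$. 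Your first-variation computation is correct but, as you yourself notice, points the wrong way: the constraint $\inf h=1$ gives $\varphi(x_i)\ge 0$ at the facet normals, so the first variation of $V$ is $\ge 0$, not $\le 0$. The attempted fix (``would have to be a rescaling of $T$, contradicting $\inf h=1$'') fails because pushing facet planes outward is compatible with $\inf h=1$ once the body recedes elsewhere; convexity alone does not forbid this, and the whole content of the lemma is precisely that such trades are unfavourable. The final compactness/Hessian sketch is not carried out and is not even well-posed at a nonsmooth body like $T$.

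The paper's argument is entirely different and bypasses all of this. Flag-transitivity of $\mathcal{S}_T$ is used to cut $T$ into congruent simplicial cones over a fundamental piece $\Omega_n$ of one facet. For any competitor $\Omega'\in\mathcal{K}_{min}(\delta,\mathcal{S}_T)$ one picks a point $y_0$ where $h_{\Omega'}$ attains the value $1$; the supporting hyperplane $H_2$ of $\Omega'$ there is tangent to $\mathbb{S}^n$, and $\Omega'\subset H_2^-$. Over the fundamental cone this gives $V_{D_n}(\Omega')\le V_{D_n}(H_2^-)$, while $V_{D_n}(T)=V_{D_n}(H_1^-)$ with $H_1$ the actual facet hyperplane of $T$. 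The lemma then reduces to an explicit one-parameter comparison: show that
\[
\overline V(\overline w_0)=\int_{\Omega_n}\Big(1-\Big|\frac{\sqrt{1+|\overline w_0|^2}}{1+\langle\overline z,\overline w_0\rangle}\Big|^{n+1}\Big)\,d\overline z
\]
is positive for $\overline w_0\in\Omega_n\setminus\{0\}$ near $0$, which follows by differentiating at $\overline w_0=0$ and using $\Omega_n\subset\mathbb{R}^n_+$. The key geometric move you are missing is this reduction to a \emph{single} tangent half-space determined by the touching point $y_0$; it replaces the intractable full perturbation $\varphi$ by a finite-dimensional comparison, and the symmetry does the rest.
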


\begin{proof}
\emph{Step 1}. Subdivision of a regular polytope.

Let $T\subset{\mathbb{R}}^{n+1}$ be a regular polytope that is tangential to the unit sphere ${\mathbb{S}}^n$.
By the high symmetry of $T$, each $\kappa$-dimensional face is also a regular polytope, where $\kappa\in[2,n]$ and $\kappa\in{\mathbb{N}}$.
We will use this high symmetry to decompose $T$ into as many congruent pieces as possible.
Let  $O_{n+1}$, the origin of ${\mathbb{R}}^{n+1}$, be the centroid of $T$.
For  an $n$-dimensional face $F_n$ of $T$, we denote by $O_n$ the centroid of $F_n$.
Let $F_{n-1}$ be one of the $(n-1)$-dimensional face of $F_n$, we denote its centroid by $O_{n-1}$.
The bootstrap procedure will produce a sequence of faces
  $$
   T=F_{n+1}\rightarrow F_n\rightarrow F_{n-1}\rightarrow \cdots \rightarrow F_{1} \rightarrow F_{0}
  $$
with lower dimensions, and yield a sequence of corresponding centroids
   $$
   O_{n+1}\rightarrow O_n\rightarrow O_{n-1}\rightarrow \cdots\rightarrow  O_{1} \rightarrow  O_{0}.
   $$
Note that
   \begin{equation}\label{e5.2}
     O_{j}O_{j-1}\perp \text{span}\{O_i, i=0,1,\cdots,j-1\}, \ \ \forall j=n+1,n,\cdots,1.
   \end{equation}
Since $O_{j}O_{j-1}\perp F_{j-1}, \forall j=n+1,n,\cdots,1$,
one may assume that $O_jO_{j-1}$ is parallell to the $x_j$-axis by rotation
and set $R_{n+1}=1, R_j=|O_jO_{j-1}|, \forall j=n,n-1,\cdots,2,1$.
Define $\Omega_j:=\text{conv}(\cup_{i=0}^jO_i), j=0,1,\cdots,n$, we have the recursive formula
   \begin{equation}\label{e5.3}
     \Omega_0=O_0, \ \
    \Omega_j=\text{conv}\{O_j,\Omega_{j-1}\},\ \   j=1,2,\cdots,n.
   \end{equation}
Then, $F_n$ can be decomposed into small pieces that are congruent to $\Omega_n$\footnote{Sometimes, we may regard $\Omega_l\subset\{z\in{\mathbb{R}}^{n+1}| z_{n+1}=R_{n+1}, z_n=R_n, \cdots, z_{l+1}=R_{l+1}, z_j\in[0,R_j], \forall j\in[1,l]\}$ as a subdomain of ${\mathbb{R}}^l$ for $l=1,2,\cdots,n$ by simply omitting the coordinates $z_k=R_k$ for $k=n+1, n, \cdots, l+1$. For example, $\Omega_n\subset\{\bar{z}\in\mathbb{R}^{n}\}$ for each $z=(\bar{z},z_{n+1})\in\mathbb{R}^{n+1}$.}.
Let $w_0=(\overline{w}_0,1)\in \Omega_n\subset{\mathbb{R}}^{n+1}$ be the point of the piece $\Omega_n$ defined above with projection
  $$
   y_0=w_{0}/|w_{0}|\in D_{n}\subset{\mathbb{S}}^n,
  $$
satisfying $h'(y_0)=1$ for support function $h'$ of another normalized group invariant convex body $\Omega'\in \mathcal{K}_{min}({\mathcal{S}}_T)$.

{\centering
$\hskip10pt \includegraphics[height=60mm]{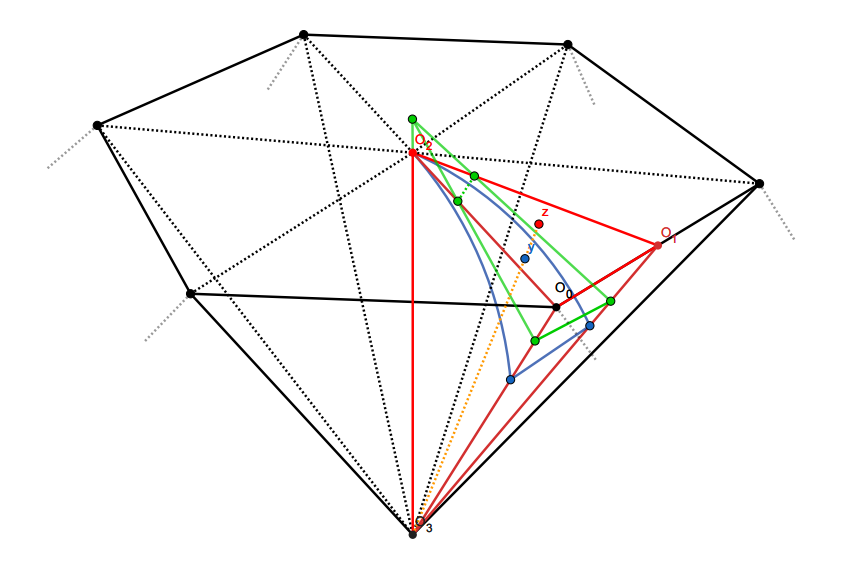} $}

\vskip-175pt

{\centering
$\hskip290pt \includegraphics[height=60mm]{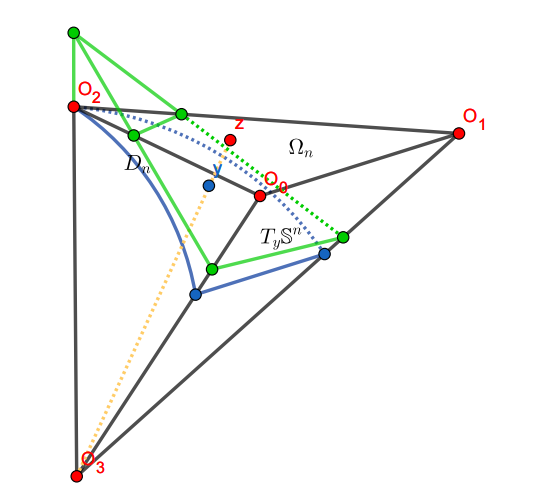} $}

\vskip10pt

 \qquad Figure: subdivision of a polyhedron tangential to the unit sphere ${\mathbb{S}}^2$ in $\mathbb{R}^3$.

\vskip10pt

\noindent\emph{Step 2.} Claim $\sharp1$: \emph{The regular polytope $T$ has local strictly maximal volume on the normalized group invariant family ${\mathcal{K}}_{min}(\delta,{\mathcal{S}}_T)$ if the following integral function
  \begin{equation}\label{e5.4}
   \overline{V}(\overline{w}_0)=\int_{\Omega_n}\left(1-\bigg|\frac{\sqrt{1+|\overline{w}_0|^2}}{1+\langle \overline{z},\overline{w}_0\rangle}\bigg|^{n+1}\right)d\overline{z}
  \end{equation}
is positive for all $\overline{w}_0\in\Omega_n\setminus \{0\}$ closing to $0$. (See footnote 1 for $\Omega_n$.)}
\begin{proof}[Proof of claim $\sharp1$]
At first, the hyperplane $H_1$ tangential to the unit sphere ${\mathbb{S}}^n$ at $O_n=e_{n+1}$ is given by the radial function $\rho_1(y)= 1/y_{n+1}$ for $y=(y_1,y_2,\ldots, y_{n+1})\in{\mathbb{S}}^n$. While, the hyperplane $H_2$ tangential to the $\mathbb{S}^n$ at $y_0$ is given by the radial function $\rho_2(y)=1/\langle y,y_0\rangle$. Therefore, we obtain that
    \begin{equation}\label{e5.5}
     V_{D_n}(H^-_1)-V_{D_n}(H^-_2)=\frac{1}{n+1}\int_{D_n}\left(\frac{1}{y_{n+1}^{n+1}}-\frac{1}{|\langle y,y_0\rangle|^{n+1}}\right)d\sigma,
    \end{equation}
where $V_{D_n}(H^-)$ represents the volume of the solid enclosed by the hyperplane $H$ and the infinite cone formed by connecting the origin to $D_n$. Note that for each normalized group invariant convex body $\Omega'\in{\mathcal{K}}_{min}(\delta,{\mathcal{S}}_T)$, we also have
    \begin{equation}\label{e5.6}
      V_{D_n}(T)=V_{D_n}(H^-_1), \ \ V_{D_n}(\Omega')\leq V_{D_n}(H^-_2).
    \end{equation}
Regarding the coordinates $\overline{z}$ in the hyperplane $H_1=\{z=(\overline{z},z_{n+1})\in{\mathbb{R}}^{n+1}|\ z_{n+1}=1\}$ as the local coordinates of ${\mathbb{S}}^n$ by projection $y=z/|z|$, we have the metric
  $$
    g_{ij}=\frac{\partial y}{\partial z_i}\cdot\frac{\partial y}{\partial z_j}=\frac{\delta_{ij}}{|z|^2}-\frac{z_iz_j|\overline{z}|^2}{|z|^6}
  $$
since $\frac{\partial y}{\partial z_i}=\left(\frac{e_i}{|z|}-\frac{\bar{z}z_i}{|z|^3},-\frac{z_i}{|z|^3}\right)$,
and thus the volume element is
   $$
    d\sigma=\sqrt{\det(g)}d\overline{z}=(1+|\overline{z}|^2)^{-\frac{n+1}{2}}d\overline{z}.
   $$
Therefore, the volume difference function
    $${\begin{split}
      \widetilde{V}(y_0): & =(n+1)(V_{D_n}(H^-_1)-V_{D_n}(H^-_2))\\
      & =\int_{\Omega_n}\left(1-\frac{1}{|\langle z,y_0\rangle|^{n+1}}\right)d\overline{z}
      \end{split}}
    $$
holds for $y_0\in D_n$. (See footnote 1 in page 14 for $\Omega_n$.) Letting $y_{0,n+1}$ be the $(n+1)$-th coordinates of $y_0$ and setting $w_0=(\overline{w}_0,1)=\frac{y_0}{y_{0,n+1}}\in\Omega_n\subset\mathbb{R}^{n+1}$, there holds
    \begin{equation}\label{e5.7}
     \widetilde{V}(y_0)=\overline{V}(\overline{w}_0):=
     \int_{\Omega_n}\left(1-\bigg|\frac{\sqrt{1+|\overline{w}_0|^2}}{1+\langle \overline{z},\overline{w}_0\rangle}\bigg|^{n+1}\right)d\overline{z},\ \ \overline{w}_0\in\Omega_n,
    \end{equation}
due to $y_0=(\overline{w}_0,1)/|(\overline{w}_0,1)|$.
This, together with \eqref{e5.6}, completes the proof of claim $\sharp1$.
\end{proof}
\noindent\emph{Step 3.} Claim $\sharp 2$: \emph{The regular polytope $T$ has locally strictly maximal volume on the normalized group invariant family ${\mathcal{K}}_{min}(\delta,{\mathcal{S}}_T)$ for some $\delta>0$. More precisely, there exists a positive constant $\delta=\delta_{T}$ such that for any $\Omega'\in{\mathcal{K}}_{min}(\delta,{\mathcal{S}}_T)\setminus\{T\}$, there holds
    \begin{equation}\label{e5.8}
      V(\Omega')<V(T).
    \end{equation}}
\begin{proof}[Proof of claim $\sharp2$]The proof of the claim is equivalent to verifying that: If $w_0=(\overline{w}_0,1)\in\Omega_n$ approaches $O_n=e_{n+1}$ without equaling $O_n$, then the volume difference function $\overline{V}(\overline{w}_0)$  in \eqref{e5.4} is positive. Along the direction segment $I=\{O_n+ra\in\Omega_n| r\geq0, a\in{\mathbb{R}}^n_+\}$ \footnote{${\mathbb{R}}^{j}_+:=\{z\in{\mathbb{R}}^{j}| \Sigma _{i=1}^jz_i> 0 \ \text{and}\ z_i\ge0\ \text{for each}\ i=1,2,\cdots, j\}, \ \ j=1,2,\cdots,n+1.$}, the volume difference function can be simplified by for $\overline{w}_0=ra$,
    $$
     V(r,a):=\overline{V}(\overline{w}_0)=\int_{\Omega_n}\bigg(1-\bigg|\frac{\sqrt{1+r^2|a|^2}}{1+r\langle \overline{z},a\rangle}\bigg|^{n+1}\bigg)d\overline{z}.
    $$
 Taking the first differentiation in $r$ and then evaluating at $r=0$, we get
    \begin{equation}
     \frac{d}{dr}\bigg|_{r=0}V(r,a)=(n+1)\int_{\Omega_n}\langle\overline{z},a\rangle d\overline{z}>0, \ \ \forall a\in{\mathbb{R}}^{n}_+
    \end{equation}
using the fact that $\Omega_{n}\subset{\mathbb{R}}^{n}_+$ (See footnote 1 in page 14 for $\Omega_n$).
This, together with Claim $\sharp1$, yields $V(r,a)>V(0,a)=0$, and hence the desired inequality \eqref{e5.8} holds.
\end{proof}
Then Lemma \ref{l5.1} follows directly from claim $\sharp 2$.
\end{proof}

For the given regular polytope $T\subset{\mathbb{R}}^{n+1}$ tangential to ${\mathbb{S}}^n$, define the group ${\mathcal{S}}_T$ invariant family
   $$
    {\mathcal{K}}_{p,f}({\mathcal{S}}_T)\equiv\Big\{\Omega\in{\mathcal{K}}_0({\mathcal{S}}_T)
    \ \big|\ \int_{{\mathbb{S}}^{n}}fh_\Omega^p=\int_{{\mathbb{S}}^{n}}fh_T^p\Big\}, \ \ p<0.
   $$
As shown in Lemma \ref{l5.1}, $T$ is a local strict maximizer of the functional ${\mathcal{F}}_{-\infty}(\cdot)$ on the normalized family ${\mathcal{K}}_{min}(\delta,{\mathcal{S}}_T)$ for some $\delta>0$. Hence, there exists a positive constant $\delta=\delta_{T}$ such that
   \begin{equation}\label{e5.10}
      {\mathcal{F}}_{-\infty}(\Omega)<{\mathcal{F}}_{-\infty}(T), \ \ \forall\ \Omega\in{\mathcal{K}}_{min}(\delta,{\mathcal{S}}_T)\setminus T.
   \end{equation}
We now consider the local variational scheme
    \begin{equation}\label{e5.11}
       \sup_{\Omega\in{\mathcal{K}}_{p,f}(\delta,{\mathcal{S}}_T)}V(\Omega),
    \end{equation}
where
$\mathcal{K}_{p,f}(\delta,{\mathcal{S}}_T):={\mathcal{K}}_{p,f}({\mathcal{S}}_T) \cap{\mathcal{N}}_\delta(T)$ and $ \partial\mathcal{K}_{p,f}(\delta,{\mathcal{S}}_T):={\mathcal{K}}_{p,f}({\mathcal{S}}_T)
\cap\partial{\mathcal{N}}_\delta(T)$.
Note that $\mathcal{K}_{min}(\delta,{\mathcal{S}}_T)=\cap_{p'<-n-1}\cup_{p\in(-\infty,p')}\mathcal{K}_{p,f}(\delta,{\mathcal{S}}_T)$,
i.e., $\mathcal{K}_{p,f}(\delta,{\mathcal{S}}_T)\rightarrow \mathcal{K}_{min}(\delta,{\mathcal{S}}_T)$
 as $p\rightarrow -\infty$.
In fact, the convergence of ${\mathcal{K}}_{p,f}(\delta,{\mathcal{S}}_T)$ to the limiting family
   \begin{equation}\label{e-family}
    {\mathcal{K}}_{-\infty,f}(\delta,{\mathcal{S}}_T)
     =\Big\{\Omega\in{\mathcal{K}}_0({\mathcal{S}}_T)  \ \big|\
       \inf_{{\mathbb{S}}^n}h_\Omega=\inf_{{\mathbb{S}}^n}h_T=1\Big\}\cap{\mathcal{N}}_\delta(T)
         ={\mathcal{K}}_{min}(\delta,{\mathcal{S}}_T)
   \end{equation}
under the Hausdorff distance. To clarify this convergence, we mean that for each $\Om^{(-\infty)}\in{\mathcal{K}}_{-\infty,f}(\delta,{\mathcal{S}}_T)$, there exists a sequence of $\Om^{(p)}\in{\mathcal{K}}_{p,f}(\delta,{\mathcal{S}}_T)$ converges to $\Om^{(-\infty)}$ under Hausdorff distance. Vice versa, for any sequence of $\Om^{(p)}\in{\mathcal{K}}_{p,f}(\delta,{\mathcal{S}}_T)$, we have $\Om^{(p)}$ sub-converges to some $\Om^{(-\infty)}\in{\mathcal{K}}_{-\infty,f}(\delta,{\mathcal{S}}_T)$.

We now show the existence of maximizer $\Omega^{(p)}\in{\mathcal{K}}_{p,f}(\delta,{\mathcal{S}}_T)$ of \eqref{e5.11} for negative large $p$, whose support function $h^{(p)}$ satisfies the Euler-Lagrange equation up to a constant multiplication.
The main ingredient of the proof is to show that the maximizer $\Omega^{(p)}$ of \eqref{e5.11} stays away from the boundary of ${\mathcal{N}}_\delta(T)$, that is, $\sup_{\Omega\in\partial{\mathcal{K}}_{p,f}(\delta,{\mathcal{S}}_T)}V(\Omega)
      <\sup_{\Omega\in{\mathcal{K}}_{p,f}(\delta,{\mathcal{S}}_T)}
      V(\Omega)=V(\Omega^{(p)})$ and $\Omega^{(p)}\not\in \partial {\mathcal{N}}_\delta(T)$.

\begin{lemm}\label{l5.2}
 Let $T\subset{\mathbb{R}}^{n+1}$ be a regular polytope tangential to the unit sphere ${\mathbb{S}}^n$.
 For each $p<-n-1$,
the variational problem \eqref{e5.11} admits a maximizer $\Omega^{(p)}$ staying away from the boundary of ${\mathcal{N}}_\delta(T)$ for a small $\delta=\delta_T>0$.
\end{lemm}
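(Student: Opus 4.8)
The plan is to run the direct method on the constrained functional $V$ over the set $\mathcal{K}_{p,f}(\delta,\mathcal{S}_T)$ and then show, crucially, that the maximizer cannot escape to the boundary $\partial\mathcal{N}_\delta(T)$. First I would note that $\mathcal{K}_{p,f}(\delta,\mathcal{S}_T)$ is nonempty (it contains $T$ itself) and that, by the $L^p$-normalization analogous to the proof of Theorem \ref{t4.1}, any $\Omega\in\mathcal{K}_{p,f}(\delta,\mathcal{S}_T)$ satisfies a two-sided bound on $h_\Omega$ coming from the spanning property of $\mathcal{S}_T$ together with the constraint $\int f h_\Omega^p=\int f h_T^p$; this is exactly the argument of Lemmas \ref{l3.1} and \ref{l3.2}, applied now to the regular polytope group $\mathcal{S}_T$ (which satisfies the spanning property). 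Hence a maximizing sequence $\Omega_j$ is uniformly bounded in $\mathrm{Lip}(\mathbb{S}^n)$, and by Blaschke's selection theorem a subsequence converges in Hausdorff distance to some $\Omega^{(p)}\in\Clos\mathcal{K}_{p,f}(\delta,\mathcal{S}_T)$, with $V(\Omega^{(p)})=\sup_{\mathcal{K}_{p,f}(\delta,\mathcal{S}_T)}V$ by uniform convergence of support functions; the constraint passes to the limit since $p<0$ and the support functions stay bounded away from $0$.

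The key step is to rule out $\Omega^{(p)}\in\partial\mathcal{N}_\delta(T)$, i.e. to show
\begin{equation}\label{e5.12}
\sup_{\Omega\in\partial\mathcal{K}_{p,f}(\delta,\mathcal{S}_T)}V(\Omega)<V(T^{(p)}),
\end{equation}
where $T^{(p)}$ denotes the $L^p$-normalized copy of $T$ lying in $\mathcal{K}_{p,f}(\delta,\mathcal{S}_T)$, so that $T^{(p)}$ is an admissible competitor with strictly larger volume than any boundary body. Here I would use the $p\to-\infty$ convergence recorded in \eqref{e-family}: as $p\to-\infty$ the family $\mathcal{K}_{p,f}(\delta,\mathcal{S}_T)$ converges to $\mathcal{K}_{\min}(\delta,\mathcal{S}_T)$, and the functional $\mathcal{F}_p$ converges to $\mathcal{F}_{-\infty}$ uniformly on these bodies (by the $L^p\to L^\infty$ estimate already used in Lemma \ref{l4.1}). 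By Lemma \ref{l5.1} (equivalently \eqref{e5.10}), $T$ is the \emph{strict} maximizer of $\mathcal{F}_{-\infty}$ over $\mathcal{K}_{\min}(\delta,\mathcal{S}_T)\setminus\{T\}$; since $\partial\mathcal{N}_\delta(T)$ is a positive Hausdorff-distance away from $T$, compactness gives a gap
\begin{equation}\label{e5.13}
\sup_{\Omega\in\mathcal{K}_{\min}(\delta,\mathcal{S}_T)\cap\partial\mathcal{N}_\delta(T)}\mathcal{F}_{-\infty}(\Omega)\le\mathcal{F}_{-\infty}(T)-2\eta
\end{equation}
for some $\eta=\eta(T,\delta)>0$. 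Transferring this through the uniform convergence $\mathcal{F}_p\to\mathcal{F}_{-\infty}$ and the convergence of the families, one gets, for all $p$ negative large,
\begin{equation}\label{e5.14}
\sup_{\Omega\in\partial\mathcal{K}_{p,f}(\delta,\mathcal{S}_T)}\mathcal{F}_p(\Omega)\le\mathcal{F}_p(T)-\eta,
\end{equation}
and since on the constraint set $\mathcal{K}_{p,f}(\delta,\mathcal{S}_T)$ the values of $\mathcal{F}_p$ and $V$ agree up to the common multiplicative constant $\bigl(\int f h_T^p/\int f\bigr)^{(n+1)/p}$, \eqref{e5.14} is exactly \eqref{e5.12}. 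Therefore the maximizer $\Omega^{(p)}$ lies in the open set $\mathcal{N}_\delta(T)$, proving the lemma.

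The main obstacle I expect is making the two simultaneous limiting passages rigorous: one must show not only that $\mathcal{F}_p\to\mathcal{F}_{-\infty}$ uniformly on a fixed compact family of bodies, but that this is compatible with the fact that the admissible family $\mathcal{K}_{p,f}(\delta,\mathcal{S}_T)$ is itself moving with $p$. The clean way to handle this is to reparametrize: every $\Omega\in\mathcal{K}_{p,f}(\delta,\mathcal{S}_T)$ can be written as $\lambda_p(\Omega')\,\Omega'$ with $\Omega'\in\mathcal{K}_{\min}(\mathcal{S}_T)$ normalized so $\inf h_{\Omega'}=1$ and $\lambda_p(\Omega')\to 1$ uniformly over the relevant $\delta$-neighborhood as $p\to-\infty$, since $\int f(\lambda h_{\Omega'})^p=\int f h_T^p$ forces $\lambda_p(\Omega')=\bigl(\int f h_T^p/\int f h_{\Omega'}^p\bigr)^{1/p}\to \min h_T/\min h_{\Omega'}=1$ by the $L^p\to L^\infty$ convergence and the fact that $\min h_T=1$. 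Then $V(\lambda_p\Omega')=\lambda_p^{n+1}V(\Omega')$, so maximizing $V$ over the moving family is, up to an error $o_p(1)$ uniform in $\Omega'$, the same as maximizing $V$ over the fixed family $\mathcal{K}_{\min}(\delta,\mathcal{S}_T)$, where Lemma \ref{l5.1} applies directly. With that reduction in hand the boundary-gap estimate \eqref{e5.14} and the existence of the interior maximizer both follow by routine compactness arguments.
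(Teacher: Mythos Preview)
Your proposal is correct and follows essentially the same route as the paper: establish compactness via the spanning property and Blaschke selection, then rule out boundary maximizers by transferring the strict gap from Lemma~\ref{l5.1} through the convergence $\mathcal{F}_p\to\mathcal{F}_{-\infty}$ together with the Hausdorff convergence $\mathcal{K}_{p,f}(\delta,\mathcal{S}_T)\to\mathcal{K}_{\min}(\delta,\mathcal{S}_T)$. The only noteworthy difference is in the technical device you use for the ``moving family'' issue: the paper handles this by proving equi-continuity of the family $\{\mathcal{F}_p\}$ directly via Minkowski's inequality in the weighted $L^{|p|}_f$ space, whereas you reparametrize each $\Omega\in\mathcal{K}_{p,f}(\delta,\mathcal{S}_T)$ as $\lambda_p(\Omega')\,\Omega'$ with $\Omega'\in\mathcal{K}_{\min}(\mathcal{S}_T)$ and show $\lambda_p\to 1$ uniformly. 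Both arguments are valid and yield the same conclusion; your reparametrization is arguably a bit more transparent, though you should be slightly careful that when $\Omega\in\partial\mathcal{N}_\delta(T)$ the rescaled body $\Omega'$ need not lie exactly on $\partial\mathcal{N}_\delta(T)$ --- it lies in an annulus of width $o_p(1)$ around it, so one should invoke the gap \eqref{e5.10} on, say, $\{\operatorname{dist}(\cdot,T)\ge\delta/2\}$ rather than exactly on $\partial\mathcal{N}_\delta(T)$.
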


\begin{proof}
For simplicity, let $\mathcal{K}(\delta,{\mathcal{S}}_T)$ denote the union of ${\mathcal{K}}_{p,f}(\delta,{\mathcal{S}}_T)$ for $p<-n-1$, that is, $\mathcal{K}(\delta,{\mathcal{S}}_T)
=\cup_{p<-n-1}{\mathcal{K}}_{p,f}(\delta,{\mathcal{S}}_T)$. By {\it a-priori} Lipschitz bound of convex bodies $\Omega\in{\mathcal{K}}(\delta,{\mathcal{S}}_T)$, the boundary $\partial{\mathcal{K}}(\delta,{\mathcal{S}}_T)$ is a sequential compact set under the Hausdorff distance. Noting that the limiting functional ${\mathcal{F}}_{-\infty}(\cdot)$ is continuous under the Hausdorff distance, it follows from \eqref{e5.10} that for some $\sigma>0$,
    \begin{equation}\label{e5.12}
      \sup_{\Omega\in\partial{\mathcal{K}}_{min}(\delta,{\mathcal{S}}_T)}
      {\mathcal{F}}_{-\infty}(\Omega)\leq{\mathcal{F}}_{-\infty}(T)-\sigma.
    \end{equation}
Noting that for fixed $\Omega$, we have the convergence
   \begin{equation}\label{e5.13}
     \lim_{p\to-\infty}{\mathcal{F}}_{p}(\Omega)={\mathcal{F}}_{-\infty}(\Omega).
   \end{equation}

We claim that the functionals ${\mathcal{F}}_p(\cdot)$ are equi-continuous for negative large $p$. Actually, by Minkowski's inequality for the $f$-weighted $L^q_f({\S}^n)$-space defined by the norm
    $$
    \|h\|_{L^q_f({\S}^n)}=\left(\int_{{\S}^n}fh^q\right)^{1/q}, \ \ \forall h\in L^q_f({\S}^n)
    $$
and a positive constant $q>1$, we have for some constant $C>0$,
\begin{eqnarray*}
\left|\left(\int_{{\S}^n}fh_{\Omega}^p\right)^{-1/p}
-\int_{{\S}^n}fh_{\Omega'}^p\bigg)^{-1/p}\right|
&=&\left|\|h^{-1}_\Omega\|_{L^{|p|}_f({\S}^n)}-\|h^{-1}_{\Omega'}\|_{L^{|p|}_f({\S}^n)}
\right|\\
&\leq&\|h^{-1}_\Omega-h^{-1}_{\Omega'}\|_{L^{|p|}_f({\S}^n)}\\
&\leq& C\|h_\Omega-h_{\Omega'}\|_{L^{|p|}_f({\S}^n)}.
     \end{eqnarray*}
Thus, the equi-continuity of the functional ${\mathcal{F}}_p(\cdot)$ follows from the definition \eqref{f1.3}.

Combining with the sequential compactness of $\partial{\mathcal{K}}(\delta,{\mathcal{S}}_T)$ and the equi-continuity of the functionals ${\mathcal{F}}_p(\cdot)$ for negative large $p$, we have
\begin{equation}\label{e5.14}
\lim_{p\to-\infty}\sup_{\Omega\in\partial{\mathcal{K}}(\delta,{\mathcal{S}}_T)}
|{\mathcal{F}}_{p}(\Omega)-{\mathcal{F}}_{-\infty}(\Omega)|=0,
\end{equation}
in particular,
$$
\lim_{p\to-\infty}|{\mathcal{F}}_{p}(T)-{\mathcal{F}}_{-\infty}(T)|=0.
$$
For short, we write
$$
C(n,p,f,{\mathcal{S}_T})=\sup_{\Omega\in{\mathcal{K}}_{p,f}(\delta,{\mathcal{S}}_T)}V(\Omega).
$$
By the compactness of ${\mathcal{K}}(\delta,{\mathcal{S}}_T)$ and formula \eqref{e-family}, we have the convergence
    \begin{equation}\label{e5.15}
\lim_{p\to-\infty}C(n,p,f,{\mathcal{S}_T})=C(n,-\infty,f,{\mathcal{S}_T})
=\sup_{\Omega\in{\mathcal{K}}_{min}(\delta,{\mathcal{S}}_T)}V(\Omega).
    \end{equation}
 Noting that $C(n,-\infty,f,{\mathcal{S}_T})=\sup_{\Omega\in{\mathcal{K}}_{min}(\delta,{\mathcal{S}}_T)}V(\Omega)
=V(T)=\mathcal{F}_{-\infty}(T)$ by Lemma \ref{l5.1}, it follows from \eqref{e5.12}-\eqref{e5.15} that
   \begin{eqnarray}\nonumber\label{e5.16}
   \sup_{\Omega\in\partial{\mathcal{K}}_{p,f}(\delta,{\mathcal{S}}_T)}
   {\mathcal{F}}_{p}(\Omega)&\leq&\sup_{\Omega\in\partial{\mathcal{K}}
   (\delta,{\mathcal{S}}_T)}{\mathcal{F}}_{p}(\Omega)\\ \nonumber
   &=&\sup_{\Omega\in\partial{\mathcal{K}}(\delta,{\mathcal{S}}_T)}
   {\mathcal{F}}_{-\infty}(\Omega)+\varepsilon(p)\\ \nonumber
     &\leq&{\mathcal{F}}_{-\infty}(T)-\sigma+\varepsilon(p)\\ \nonumber
     &=&\sup_{\Omega\in{\mathcal{K}}_{p,f}(\delta,{\mathcal{S}}_T)}V(\Omega)-\sigma
     +\varepsilon(p)
   \end{eqnarray}
hold for negative large $p$ and some infinitesimal $\varepsilon(p)$, where in the second inequality we have also used \eqref{e5.12} and the convergence of $\partial{\mathcal{K}}_{p,f}(\delta,{\mathcal{S}}_T)$ to $\partial{\mathcal{K}}_{min}(\delta,{\mathcal{S}}_T)$ under Hausdorff distance. Using again the relation
   \begin{equation}
     {\mathcal{F}}_{p}(\Omega)={\mathcal{F}}_{-\infty}(\Omega)+\varepsilon(p)=V(\Omega)+\varepsilon(p), \ \ \forall\Omega\in{\mathcal{K}}_{p,f}(\delta,{\mathcal{S}}_T)
   \end{equation}
for some infinitesimal $\varepsilon(p)$ and negative large $p$, we conclude from \eqref{e5.16} that
    \begin{equation}\label{e5.17}
      \sup_{\Omega\in\partial{\mathcal{K}}_{p,f}(\delta,{\mathcal{S}}_T)}V(\Omega)
      \le\sup_{\Omega\in{\mathcal{K}}_{p,f}(\delta,{\mathcal{S}}_T)}
      V(\Omega)-\sigma+\varepsilon(p)
    \end{equation}
holds for small $\delta>0$ and negative large $p$. We thus arrive at the existence result of local maximizer of the functional ${\mathcal{F}}_{p}(\cdot)$ for the negative large $p$.
\end{proof}

\begin{lemm}\label{l5.3}
 Let $T\subset{\mathbb{R}}^{n+1}$ be a regular polytope tangential to the unit sphere ${\mathbb{S}}^n$.
 If $\delta=\delta_{T}>0$ is small and $p$ is negative large,
 the variational problem \eqref{e5.11} admits a maximizer $\Omega^{(p)}\not\in \p{\mathcal{N}}_\delta(T)$.
 Moreover, the maximizer $\Omega^{(p)}$ satisfies the equation \eqref{e1.2}, and converges to the given regular polytope $T$ as $p\rightarrow -\infty$.
\end{lemm}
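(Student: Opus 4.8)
The plan is to obtain Lemma \ref{l5.3} by combining the boundary estimate of Lemma \ref{l5.2}, the Euler--Lagrange analysis already carried out in the proof of Lemma \ref{p3.1}, and the \emph{strict} local maximality of $T$ from Lemma \ref{l5.1}. First I would fix $\delta=\delta_T$ as in Lemma \ref{l5.1} and choose $p$ so negative that the infinitesimal $\varepsilon(p)$ appearing in \eqref{e5.17} satisfies $\varepsilon(p)<\sigma$. Then \eqref{e5.17} reads $\sup_{\partial{\mathcal{K}}_{p,f}(\delta,{\mathcal{S}}_T)}V<\sup_{{\mathcal{K}}_{p,f}(\delta,{\mathcal{S}}_T)}V=V(\Omega^{(p)})$, so the maximum in \eqref{e5.11} is \emph{not} attained on $\partial{\mathcal{N}}_\delta(T)$; hence $\Omega^{(p)}$ lies in the interior of ${\mathcal{N}}_\delta(T)$ and may be treated as a local maximizer of $V$ subject only to the single scalar normalization $\int_{{\mathbb{S}}^n}fh_{\Omega^{(p)}}^p=\int_{{\mathbb{S}}^n}fh_T^p$.

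For the Euler--Lagrange equation I would repeat, essentially verbatim, the argument in the proof of Lemma \ref{p3.1} (and \cite{CCL21,CW06}): the first variation of $V$ along Wulff shapes bounds the Monge--Amp\`ere measure of $\Omega^{(p)}$ from above, polar-body duality bounds it from below, Caffarelli's results \cite{caffarelli1990localization,caffarelli1990interior} give strict convexity and $C^{1,\alpha}$ regularity of $h^{(p)}:=h_{\Omega^{(p)}}$ (resp.\ $C^{2,\alpha}$ when $f$ is H\"older), and testing the vanishing first variation against the group invariant Hilbert space ${\mathbb{H}}({\mathcal{S}}_T)$ forces the ${\mathcal{S}}_T$-invariant integrand $\det(\nabla^2h^{(p)}+h^{(p)}I)-c_p f(h^{(p)})^{p-1}$ to vanish identically, for a positive Lagrange constant $c_p$. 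Rescaling $h^{(p)}\mapsto\mu_p h^{(p)}$ with $\mu_p=c_p^{1/(p-n-1)}$ turns this into the exact equation \eqref{e1.2}. Integrating the equation against $h^{(p)}$ gives $c_p=V(\Omega^{(p)})/\int_{{\mathbb{S}}^n}fh_T^p$; since $V(\Omega^{(p)})\to V(T)\in(0,\infty)$ by \eqref{e5.15}, while $\int_{{\mathbb{S}}^n}fh_T^p\to0$ only polynomially fast in $|p|$ (a Laplace-type estimate, using that $h_T=1$ exactly at the finitely many points where the facets of $T$ touch ${\mathbb{S}}^n$ and is quadratically close to $1$ nearby), one gets $\log\mu_p=(p-n-1)^{-1}\log c_p\to0$, i.e.\ $\mu_p\to1$. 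Hence the convex body with support function $\mu_p h^{(p)}$ has the same $p\to-\infty$ limit as $\Omega^{(p)}$.

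To identify that limit I would apply Blaschke's selection theorem: the bodies $\Omega^{(p)}\in{\mathcal{N}}_\delta(T)$ are uniformly Lipschitz and, by the first paragraph, stay away from $\partial{\mathcal{N}}_\delta(T)$, so any sequence $p_k\to-\infty$ has a subsequence along which $\Omega^{(p_k)}\to\Omega^{(-\infty)}\in{\mathcal{N}}_\delta(T)$ with uniform convergence of support functions. Passing to the limit in $(\int fh_{\Omega^{(p)}}^p/\int f)^{-1/p}=(\int fh_T^p/\int f)^{-1/p}$ yields $\inf_{{\mathbb{S}}^n}h_{\Omega^{(-\infty)}}=1=\inf_{{\mathbb{S}}^n}h_T$, so $\Omega^{(-\infty)}\in{\mathcal{K}}_{min}(\delta,{\mathcal{S}}_T)$, while $V(\Omega^{(-\infty)})=\lim_k V(\Omega^{(p_k)})=C(n,-\infty,f,{\mathcal{S}}_T)=V(T)$ by \eqref{e5.15} and Lemma \ref{l5.1}. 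Since Lemma \ref{l5.1} asserts that $T$ is the \emph{unique} maximizer of $V$ on ${\mathcal{K}}_{min}(\delta,{\mathcal{S}}_T)$, we must have $\Omega^{(-\infty)}=T$; as every subsequential limit equals $T$, the whole family converges, $\Omega^{(p)}\to T$ as $p\to-\infty$. Together with the second paragraph this proves Lemma \ref{l5.3}, and Theorem \ref{t1.2} follows immediately.

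The step I expect to be the main obstacle is showing that the maximizer is \emph{interior} to ${\mathcal{N}}_\delta(T)$ (the first paragraph): without it one obtains only a variational inequality rather than the genuine equation \eqref{e1.2}, and it rests on the strict separation \eqref{e5.12} furnished by Lemma \ref{l5.1} together with the equicontinuity of the functionals ${\mathcal{F}}_p$ for negative large $p$. A secondary technical point is making the rescaling rigorous, which requires the (mild) decay rate of $\int_{{\mathbb{S}}^n}fh_T^p$. By contrast, the Euler--Lagrange/regularity step is routine, being identical to that in Lemma \ref{p3.1}, and the limit identification is immediate once the strict local maximality of $T$ is available.
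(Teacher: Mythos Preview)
Your proposal is correct and follows essentially the same three-step approach as the paper: invoke Lemma~\ref{l5.2} for interiority, repeat the Euler--Lagrange/regularity argument of Lemma~\ref{p3.1}, and identify the limit via \eqref{e5.15} together with the strict uniqueness in Lemma~\ref{l5.1}. The one place you go beyond the paper is your second paragraph, where you track the Lagrange multiplier $c_p$ and show $\mu_p=c_p^{1/(p-n-1)}\to1$ via a Laplace-type estimate on $\int_{\mathbb{S}^n}fh_T^p$; the paper simply asserts that ``the maximizer $\Omega^{(p)}$ satisfies the equation \eqref{e1.2}'' and that it converges to $T$, without separating the maximizer from its rescaling. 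Your extra step is what actually closes the gap between ``maximizer converges to $T$'' and ``solution of \eqref{e1.2} converges to $T$'' needed for Theorem~\ref{t1.2}, so it is a genuine (if minor) improvement rather than a deviation.
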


\begin{proof}
The existence of maximizer $\Omega^{(p)}$ follows from Lemma \ref{l5.2}.  Owing to  the property that $\Omega^{(p)}$ stays away from the boundary of ${\mathcal{N}}_\delta(T)$, we can prove that $\Omega^{(p)}$ satisfies the Euler-Lagrange equation \eqref{e1.2} as in the proof of global variational problem. Noting that $C(n,p,f,{\mathcal{S}_T})$ converges to $C(n,-\infty,f,{\mathcal{S}_T})$ as $p\to-\infty$, the maximizer $\Omega^{(p)}$ converges to a maximizer $\Omega^{(-\infty)}$ of the limiting functional. However, by Lemma \ref{l5.1}, $T$ is the unique maximizer of the limiting functional on the family ${\mathcal{K}}_{min}(\delta,{\mathcal{S}}_T)$. This, together with the observation that $\mathcal{F}_{-\infty}(\Omega)=V(\Omega)$ for all $\Omega\in{\mathcal{K}}_{min}(\delta,{\mathcal{S}}_T)$, implies that $\Omega^{(-\infty)}=T$. This completes the proof of the lemma.
\end{proof}

\begin{proof}[Proof of Theorem \ref{t1.2}.] Theorem \ref{t1.2} is a direct consequence of Lemma \ref{l5.3}.
\end{proof}

From our proof, a regular polytope is a local maximizer of the functional $\mathcal F_p$.
We would like to point out that it may not be a global maximizer  of $\mathcal F_p$.

\vspace{10pt}

\section{The $L_p$ dual Minkowski problem}

In this section, we turn to study the $L_p$ dual Minkowski problem \eqref{e1.1}
and prove Theorem \ref{t1.3}. Parallel to Section 3, we consider a variational problem
   \begin{equation}\label{e6.1}
      \sup_{\Omega\in{\mathcal{K}}_{p}({\mathcal{S}})}V_q(\Omega) \ \ \text{with}\ \ V_q(\Omega)\equiv\int_{{\mathbb{S}}^{n}}r^q(y)dy, \ \ q\not=0
   \end{equation}
on the group ${\mathcal{S}}$ invariant family
   $$
    {\mathcal{K}}_{p}({\mathcal{S}}):=\Bigg\{\Omega\in{\mathcal{K}}_0({\mathcal{S}})\Big|\ \int_{{\mathbb{S}}^n}fh_\Omega^p=\int_{{\mathbb{S}}^n}f\Bigg\}, \ \ p\not=0.
   $$
A similar argument of Lemma \ref{p3.1} gives the following solvability result.

\begin{prop}\label{p6.1}
Let ${\mathcal{S}}$ be a discrete subgroup of $O(n+1)$ satisfying the spanning property.
Then for  $q\not=0$ and $p<-n$, the variational problem \eqref{e6.1} admits a smooth positive maximizer $h^{(p,q)}$ satisfying the Euler-Lagrange equation \eqref{e1.1} up to a constant.
\end{prop}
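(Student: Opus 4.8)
The plan is to mimic closely the argument for Lemma~\ref{p3.1} (Lemma~3.3), replacing the volume functional $V(\cdot)$ by $V_q(\cdot)$, and to check that the $q$-dependence does not destroy the compactness of the maximizing sequence. First I would observe that by Lemmas~\ref{l3.1} and \ref{l3.2}, for $p<-n$ every $\Omega\in{\mathcal{K}}_p({\mathcal{S}})$ satisfies $0<c\le r_\Omega\le h_\Omega\le C$ uniformly (the lower bound on the inner radius coming from Lemma~\ref{l3.2}, the upper bound on $h_\Omega$ from Lemma~\ref{l3.1}), with constants depending only on $n$, $p$, $f$ and ${\mathcal{S}}$. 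Consequently $V_q(\Omega)=\int_{{\mathbb{S}}^n}r^q(y)\,dy$ is \emph{a priori} bounded above and bounded below away from zero for every sign of $q\ne 0$: when $q>0$ use $r\le C$, and when $q<0$ use $r\ge c$. In particular the supremum in \eqref{e6.1} is a finite positive number. This is the key point where the spanning property is used, exactly as in Section~3.

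Next I would take a maximizing sequence $\Omega_j\in{\mathcal{K}}_p({\mathcal{S}})$ for \eqref{e6.1}. By the uniform Lipschitz bound for support functions of bodies squeezed between $B_c$ and $B_C$, Blaschke's selection theorem gives a subsequence with $\Omega_j\to\Omega^{(p,q)}$ in the Hausdorff metric, the support functions $h_j\to h^{(p,q)}$ uniformly, and hence also $r_j\to r^{(p,q)}$ uniformly on ${\mathbb{S}}^n$. Since $r^q$ is continuous and uniformly bounded on the compact range $[c,C]$, dominated convergence yields $V_q(\Omega_j)\to V_q(\Omega^{(p,q)})$; and the constraint $\int_{{\mathbb{S}}^n}fh_j^p=\int_{{\mathbb{S}}^n}f$ passes to the limit because $h_j$ stays bounded away from zero and $f$ is bounded, so $\Omega^{(p,q)}\in{\mathcal{K}}_p({\mathcal{S}})$. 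Therefore $\Omega^{(p,q)}$ is a genuine maximizer. Group invariance of $\Omega^{(p,q)}$ is inherited from the group invariance of the functionals and the constraint, since the ${\mathcal{S}}$-invariant bodies form a closed subfamily.

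Finally I would derive the Euler--Lagrange equation. As in the proof of Lemma~\ref{p3.1}, one computes the first variation of $V_q$ and of the constraint functional $\int f h^p$ along a one-parameter family of Wulff shapes (perturbing the support function by an ${\mathcal{S}}$-invariant test function $g$). The first variation of $V_q(\Omega)=\int_{{\mathbb{S}}^n} r^q$ with respect to the support function is, up to the constant $q/(n+1)$, the measure $h^{p-1}(h^2+|\nabla h|^2)^{(n+1-q)/2}\det(\nabla^2h+hI)^{-1}\cdot\det(\nabla^2 h+hI)$ — more precisely it produces exactly the $L_p$ dual surface area measure ${\mathcal{C}}_{p,q}$ up to the weight $h^{p-1}$ on the other side — so that the Lagrange-multiplier relation gives $\int_{{\mathbb{S}}^n}\big[\det(\nabla^2h+hI)-\lambda f h^{p-1}(h^2+|\nabla h|^2)^{(n+1-q)/2}\big]g=0$ for all ${\mathcal{S}}$-invariant $g$, whence, using that the orthogonal complement of ${\mathbb{H}}({\mathcal{S}})$ in ${\mathbb{H}}({\mathcal{S}})$ is trivial as in Section~3, the maximizer solves \eqref{e1.1} up to the constant $\lambda$, which is absorbed by scaling. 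Strict convexity and $C^{1,\alpha}$ (and $C^{2,\alpha}$ when $f$ is H\"older) regularity follow from Caffarelli's theory once the Monge--Amp\`ere measure is shown to be pinched between two positive multiples of Lebesgue measure, which in turn follows from the two-sided bounds on $h$ and $r$ together with $c_1\le f\le c_2$; here the extra factor $(h^2+|\nabla h|^2)^{(n+1-q)/2}$ is harmless precisely because $|\nabla h|$ is bounded (Lipschitz bound) and $h$ is bounded above and below. The main obstacle is exactly this last regularity/pinching step: one must check that the dual weight $(h^2+|\nabla h|^2)^{(n+1-q)/2}$ stays comparable to a constant along the maximizing body, and that the first-variation computation of $V_q$ along Wulff shapes indeed reproduces the right-hand side of \eqref{e1.1}; both are routine given the uniform estimates but require care with the $q$-exponent. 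I would reference \cite{CCL21}, \cite{CW06} and \cite{caffarelli1990localization,caffarelli1990interior} for the details, noting only the modifications caused by the dual weight.
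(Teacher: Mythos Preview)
Your proposal is correct and follows exactly the approach the paper indicates: the paper's own proof of this proposition consists solely of the sentence ``A similar argument of Lemma~\ref{p3.1} gives the following solvability result,'' and you have faithfully carried out that adaptation, correctly identifying the two places where the $q$-dependence enters (boundedness of $V_q$ on ${\mathcal{K}}_p({\mathcal{S}})$ for both signs of $q$, and the pinching of the extra factor $(h^2+|\nabla h|^2)^{(n+1-q)/2}$ via the Lipschitz bound on $h$). One cosmetic slip: the chain $r_\Omega\le h_\Omega$ is not a pointwise inequality; what you use and need is $\min h_\Omega\le r_\Omega(y)\le \max h_\Omega$, which is what Lemmas~\ref{l3.1} and~\ref{l3.2} actually control.
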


Recall the functional
\begin{equation*}
 \F_{p,q}(\Omega) = V_q(\Omega)\left(\int_{{\mathbb{S}}^{n}}fh_\Omega^p{\Big/}\int_{{\mathbb{S}}^{n}}f\right)^{-q/p}
 \end{equation*}
 defined on $\mathcal{S}$-invariant convex bodies $\Om\subset \mathcal K_0(\mathcal{S})$.
 Similarly to Section 4, the maximizer $\Omega_h^{(p,q)}$ in Proposition \ref{p6.1}
 also serves as the extremal body of the following inequality.

\begin{prop}\label{p6.2}
 Let ${\mathcal{S}}$ be a discrete subgroup of $O(n+1)$ satisfying the spanning property.
 Then for $q\not=0$ and $p<-n$, we have
   \begin{equation}\label{e6.2}
      \mathcal{F}_{p,q}(\Omega)
       \leq C(n,p,q,f,{\mathcal{S}}), \ \ \forall\ \Omega\in{\mathcal{K}}_0({\mathcal{S}})
   \end{equation}
with
 $$C(n,p,q,f,{\mathcal{S}}):=\sup_{\Omega\in{\mathcal{K}}_{p}({\mathcal{S}})} \mathcal{F}_q(\Omega).$$
Moreover, the inequality \eqref{e6.2} becomes equality
at some smooth group invariant solution $h^{(p,q)}$ of \eqref{e1.1}, corresponding to the group invariant convex body $\Omega^{(p,q)}\in{\mathcal{K}}_0({\mathcal{S}})$.
\end{prop}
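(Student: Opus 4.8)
The plan is to deduce \eqref{e6.2} from the constrained variational problem \eqref{e6.1} by a pure scaling argument, exactly paralleling the proof of Theorem \ref{t4.1}. The point is that both ingredients of $\mathcal{F}_{p,q}$ are positively homogeneous under dilations: for $t>0$ one has $h_{t\Omega}=t\,h_\Omega$ and $r_{t\Omega}(y)=t\,r_\Omega(y)$, hence $V_q(t\Omega)=t^q V_q(\Omega)$ while $\int_{\mathbb{S}^n} f h_{t\Omega}^p=t^p\int_{\mathbb{S}^n} f h_\Omega^p$; since the exponent $-q/p$ in the definition \eqref{f1.4} of $\mathcal{F}_{p,q}$ is chosen precisely to balance these two degrees, $\mathcal{F}_{p,q}(t\Omega)=\mathcal{F}_{p,q}(\Omega)$. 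Dilations also preserve $\mathcal{S}$-invariance and membership in $\mathcal{K}_0(\mathcal{S})$, so maximizing $\mathcal{F}_{p,q}$ over all of $\mathcal{K}_0(\mathcal{S})$ is the same as maximizing $V_q$ over the normalized slice $\mathcal{K}_p(\mathcal{S})$.

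Concretely, given $\Omega\in\mathcal{K}_0(\mathcal{S})$ with support function $h_\Omega$, I would set
$$\lambda=\left(\int_{\mathbb{S}^n} f h_\Omega^p\Big/\int_{\mathbb{S}^n} f\right)^{1/p},\qquad \widetilde{\Omega}=\lambda^{-1}\Omega,\qquad \widetilde{h}=h_\Omega/\lambda,$$
so that $\int_{\mathbb{S}^n} f\widetilde{h}^p=\int_{\mathbb{S}^n} f$, i.e. $\widetilde{\Omega}\in\mathcal{K}_p(\mathcal{S})$. Using $V_q(\widetilde{\Omega})=\lambda^{-q}V_q(\Omega)$ and $\lambda^{-q}=\big(\int_{\mathbb{S}^n} f h_\Omega^p/\int_{\mathbb{S}^n} f\big)^{-q/p}$, one gets $\mathcal{F}_{p,q}(\Omega)=V_q(\widetilde{\Omega})\le \sup_{\Omega'\in\mathcal{K}_p(\mathcal{S})}V_q(\Omega')=C(n,p,q,f,\mathcal{S})$, which is \eqref{e6.2}. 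For the equality assertion, let $h^{(p,q)}$ be the maximizer of \eqref{e6.1} with body $\Omega^{(p,q)}\in\mathcal{K}_p(\mathcal{S})$ furnished by Proposition \ref{p6.1}; by the dilation invariance just noted, every positive multiple of $h^{(p,q)}$ realizes equality in \eqref{e6.2}, and Proposition \ref{p6.1} gives that this body solves \eqref{e1.1} up to a multiplicative constant.

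The only genuine thing to check — and the natural place for an error to hide — is that $V_q$ is finite and the supremum in $C(n,p,q,f,\mathcal{S})$ is attained, which is delicate for $q<0$ since then $V_q(\Omega)=\int_{\mathbb{S}^n} r^q$ diverges if the origin lies on $\partial\Omega$. This is already subsumed in Proposition \ref{p6.1}: the spanning property forces the bounded ratio condition, so for a maximizing sequence in $\mathcal{K}_p(\mathcal{S})$ the inner and outer radii are comparable and, after the normalization above, bounded away from $0$ and $\infty$ just as in Lemmas \ref{l3.1}--\ref{l3.2}; hence the origin stays uniformly interior, $V_q$ is continuous along the sequence under Hausdorff convergence, and Blaschke selection produces the maximizer. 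I therefore expect no real obstacle here: the argument is a verbatim adaptation of Theorem \ref{t4.1}, and the one piece of care needed is simply keeping the homogeneity bookkeeping straight (degree $q$ in $V_q$ against degree $p$ in the constraint, yielding the exponent $-q/p$).
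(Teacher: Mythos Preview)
Your proposal is correct and is precisely the argument the paper intends: Proposition \ref{p6.2} is stated without proof under the phrase ``Similarly to Section 4,'' and the proof of Theorem \ref{t4.1} is exactly the scaling/normalization argument you wrote out, with $V$ replaced by $V_q$ and the exponent $-(n+1)/p$ replaced by $-q/p$. Your extra remark about finiteness of $V_q$ for $q<0$ is a reasonable sanity check and is indeed covered by the a-priori bounds underlying Proposition \ref{p6.1}.
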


\begin{rema}\label{r6.1}
 Let $\Omega^*=\{x\in\mathbb{R}^{n+1}:x\cdot y\le1\ \text{for}\ \forall\ y\in \Omega\}$ be the polar body of $\Omega$. Then there exists a duality
   \begin{equation}
     \mathcal{F}_{-q,-p}(\Omega)=\mathcal{F}^*_{p,q}(\Omega^*)
     := \left(\int_{{\mathbb{S}}^n}h_{\Omega^*}^p\right)
     \left(\int_{{\mathbb{S}}^n}fr_{\Omega^*}^q{\Big/}
     \int_{{\mathbb{S}}^n}f\right)^{-p/q}.
   \end{equation}
 Thus, the critical point $\Omega$ of the functional $\mathcal{F}_{-q,-p}(\cdot)$ corresponds to the critical point $\Omega^*$ of the functional $\mathcal{F}^*_{p,q}(\cdot)$. Consequently, an analogous solvability result to Proposition \ref{p6.1} also holds for $p\not=0$ and $q>n$.
\end{rema}

By the group invariance of the solution $\Omega^{(p,q)}$,
one has the uniform {\it a-priori} bound for $\Omega^{(p,q)}$.
Passing to the limit $p\to-\infty$,
we obtain the limit $\Omega^{(-\infty,q)}$ which is the extremal body of the functional $ \mathcal{F}_{-\infty,q}$.

\begin{lemm}\label{l6.1}
   Let ${\mathcal{S}}$ be a discrete subgroup of $O(n+1)$ satisfying the spanning property. The best constant $C(n,p,q,f,{\mathcal{S}})$ of the functional $\mathcal{F}_{p,q}(\cdot)$ converges to the best constant $C(n,-\infty,q, {\mathcal{S}})$ of the limiting functional
     $$
      \mathcal{F}_{-\infty,q}(\Omega):= V_q(\Omega){\Big/}(\min_{{\mathbb{S}}^{n}}h_\Omega)^{q}, \ \ \Omega\in{\mathcal{K}}_0({\mathcal{S}}).
     $$
Moreover, $\Omega^{(-\infty,q)}$ is a maximizer of the functional $\mathcal{F}_{-\infty,q}(\cdot)$.
\end{lemm}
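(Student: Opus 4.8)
The plan is to follow the proof of Lemma~\ref{l4.1} almost verbatim, the only bookkeeping difference being that the $f$-weighted power mean of $h$ now enters with exponent $-q/p$ instead of $-(n+1)/p$, so the sign of $q$ must be tracked. First I would record the uniform bounds. The set $\mathcal{K}_p(\mathcal{S})$ here is defined by the same constraint $\int_{\mathbb{S}^{n}}fh^p=\int_{\mathbb{S}^{n}}f$ as in Section~3, so Lemmas~\ref{l3.1} and~\ref{l3.2} apply word for word to the maximizer $h^{(p,q)}$, giving $1-o_p(1)\le h^{(p,q)}\le C_n(\mathcal{S})$ on $\mathbb{S}^{n}$. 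The estimate $\liminf_{p\to-\infty}\min_{\mathbb{S}^{n}}h^{(p,q)}\ge 1$ is immediate from Lemma~\ref{l3.2}; for the reverse, evaluate the dual equation~\eqref{e1.1} at a minimum point $x_0$ of $h^{(p,q)}$, where $\nabla h=0$ and $\nabla^2h\ge0$, so that $h^n\le\det(\nabla^2h+hI)=fh^{p-1}(h^2)^{(n+1-q)/2}=fh^{p+n-q}$ at $x_0$, i.e.\ $\min_{\mathbb{S}^{n}}h^{(p,q)}\le(\max_{\mathbb{S}^{n}}f)^{1/(q-p)}\to 1$. Thus $\min_{\mathbb{S}^{n}}h^{(p,q)}\to1$, the exact analogue of Corollary~\ref{c3.1}. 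Consequently the radial functions obey $1-o_p(1)\le r^{(p,q)}\le C_n(\mathcal{S})$, so $V_q(\Omega^{(p,q)})$ is bounded between two positive constants whether $q>0$ or $q<0$, and by Blaschke's selection theorem $\Omega^{(p,q)}$ sub-converges in the Hausdorff distance to some $\Omega^{(-\infty,q)}\in\mathcal{K}_0(\mathcal{S})$ containing $B_1$, with $\min_{\mathbb{S}^{n}}h_{\Omega^{(-\infty,q)}}=1$.

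Next I would establish $C(n,p,q,f,\mathcal{S})\to C(n,-\infty,q,\mathcal{S})$ by two one-sided estimates. For each fixed $\Omega\in\mathcal{K}_0(\mathcal{S})$, the $f$-weighted $L^{|p|}$-norm of $h_\Omega^{-1}$ converges to its $L^\infty$-norm, hence $(\int_{\mathbb{S}^{n}}fh_\Omega^p/\int_{\mathbb{S}^{n}}f)^{1/p}\to\min_{\mathbb{S}^{n}}h_\Omega$ and $\mathcal{F}_{p,q}(\Omega)\to\mathcal{F}_{-\infty,q}(\Omega)$; since $\mathcal{F}_{p,q}(\Omega)\le C(n,p,q,f,\mathcal{S})$ by Proposition~\ref{p6.2}, letting $p\to-\infty$ and then taking the supremum over $\Omega$ gives $C(n,-\infty,q,\mathcal{S})\le\liminf_{p\to-\infty}C(n,p,q,f,\mathcal{S})$. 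For the opposite inequality, note that $\Omega^{(p,q)}\in\mathcal{K}_p(\mathcal{S})$ makes the weight factor equal to $1$, so $C(n,p,q,f,\mathcal{S})=\mathcal{F}_{p,q}(\Omega^{(p,q)})=V_q(\Omega^{(p,q)})$, and therefore
\[
\mathcal{F}_{-\infty,q}(\Omega^{(p,q)})=\frac{V_q(\Omega^{(p,q)})}{\big(\min_{\mathbb{S}^{n}}h^{(p,q)}\big)^{q}}=C(n,p,q,f,\mathcal{S})\,(1+o_p(1))
\]
by the step above, using $\min_{\mathbb{S}^{n}}h^{(p,q)}\to1$. Combining this with $\mathcal{F}_{-\infty,q}(\Omega^{(p,q)})\le C(n,-\infty,q,\mathcal{S})$ and the uniform boundedness of $C(n,p,q,f,\mathcal{S})$ yields $\limsup_{p\to-\infty}C(n,p,q,f,\mathcal{S})\le C(n,-\infty,q,\mathcal{S})$, and the two bounds together give the claimed convergence.

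Finally, to see that $\Omega^{(-\infty,q)}$ is a maximizer of $\mathcal{F}_{-\infty,q}$: along the convergent subsequence the support functions $h^{(p,q)}$ converge uniformly to $h_{\Omega^{(-\infty,q)}}$, hence so do the radial functions, so $V_q(\Omega^{(p,q)})\to V_q(\Omega^{(-\infty,q)})$ (for $q<0$ this uses the uniform lower bound $r^{(p,q)}\ge1-o_p(1)$, which keeps $t\mapsto t^q$ uniformly continuous on the relevant compact range), and $\min_{\mathbb{S}^{n}}h^{(p,q)}\to\min_{\mathbb{S}^{n}}h_{\Omega^{(-\infty,q)}}=1$. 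Therefore $\mathcal{F}_{-\infty,q}(\Omega^{(-\infty,q)})=V_q(\Omega^{(-\infty,q)})=\lim_{p\to-\infty}V_q(\Omega^{(p,q)})=\lim_{p\to-\infty}C(n,p,q,f,\mathcal{S})=C(n,-\infty,q,\mathcal{S})$, so $\Omega^{(-\infty,q)}$ attains the best constant.

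I expect the main obstacle, compared with Lemma~\ref{l4.1}, to be keeping the argument free of the sign of $q$: when $q>0$ one has the clean pointwise monotonicity $\mathcal{F}_{p,q}\le\mathcal{F}_{-\infty,q}$ (from $(\int fh^p/\int f)^{1/p}\ge\min_{\mathbb{S}^{n}}h$), which would immediately give $C(n,p,q,f,\mathcal{S})\le C(n,-\infty,q,\mathcal{S})$, but for $q<0$ that inequality reverses, so the upper bound must instead be extracted from the maximizing sequence via $\min_{\mathbb{S}^{n}}h^{(p,q)}\to1$, and one must also separately check that $V_q(\Omega^{(p,q)})$ stays bounded away from $0$ and $\infty$ when $q<0$ — which is exactly where the quantitative lower bound of Lemma~\ref{l3.2} (through $r^{(p,q)}\ge\min_{\mathbb{S}^{n}}h^{(p,q)}$) is needed. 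Everything else is a routine adaptation of Sections~3--4.
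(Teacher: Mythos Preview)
Your proof is correct and follows the paper's own route, which simply says the proof is ``similar to that of Lemma~\ref{l4.1} and is omitted here.'' In fact you are more careful than a literal transcription of Lemma~\ref{l4.1} would be: that proof uses the pointwise inequality $\mathcal{F}_{p}(\Omega)\le\mathcal{F}_{-\infty}(\Omega)$ (equation~\eqref{e4.2}) to obtain $C(n,p,f,\mathcal{S})\le C(n,-\infty,\mathcal{S})$ directly, and the analogue $\mathcal{F}_{p,q}\le\mathcal{F}_{-\infty,q}$ indeed reverses when $q<0$, as you observe. Your workaround---extracting the upper bound from the maximizing sequence itself via $\min_{\mathbb{S}^{n}}h^{(p,q)}\to1$ (the analogue of Corollary~\ref{c3.1}, which the paper invokes separately in the proof of Theorem~\ref{t1.3})---is the right fix and makes the argument uniform in the sign of $q$.
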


The proof of Lemma \ref{l6.1} is similar to that of Lemma \ref{l4.1} and is omitted here.
Next we show

\begin{lemm}\label{l6.2}
 Let ${\mathcal{S}}$ be a discrete subgroup of $O(n+1)$ satisfying the spanning property.
 \begin{itemize}
\item [(1)] The maximizer $\Omega^{(-\infty,q)}$ is a group invariant polytope if $q>0$.
\item [(2)] The maximizer $\Omega^{(-\infty,q)}$ is a ball $B_R\subset{\mathbb{R}}^{n+1}$ for some $R>0$, if $q<0$.
\end{itemize}
\end{lemm}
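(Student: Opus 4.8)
The statement divides naturally into the two cases $q>0$ and $q<0$, and in both cases I would exploit the variational characterization of $\Omega^{(-\infty,q)}$ from Lemma \ref{l6.1}: it maximizes $\mathcal{F}_{-\infty,q}(\Omega) = V_q(\Omega)/(\min_{\S^n}h_\Omega)^q$ over $\mathcal{K}_0(\mathcal{S})$, where $V_q(\Omega)=\int_{\S^n}r_\Omega^q(y)\,dy$. A key normalization observation is that $\mathcal{F}_{-\infty,q}$ is scale-invariant (both $V_q$ and $(\min h)^q$ scale like $\lambda^q$), so we may work in the normalized class $\mathcal{K}_{min}(\mathcal{S})=\{\Omega: \min_{\S^n}h_\Omega=1\}$ and simply maximize $V_q(\Omega)$ there. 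Geometrically, $\min_{\S^n}h_\Omega=1$ says that $B_1\subset\Omega$ with the unit ball touching $\partial\Omega$; equivalently $r_\Omega(y)\ge 1$ for all $y\in\S^n$.

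\textbf{Case $q>0$.} Here larger $r_\Omega$ means larger $V_q$, so the maximizer wants to be as large as possible subject to the normalization $\min h_\Omega=1$. The plan is to mimic the argument of Lemma \ref{l4.3}: if $\Omega\in\mathcal{K}_{min}(\mathcal{S})$ is not a polytope, pick a boundary point $x_0\in\partial\Omega$ where $\partial\Omega$ is smooth (or at least has a supporting hyperplane not of the polytope type), let $H^-(x_0)$ be the supporting half-space, and form the group-invariant polytope $P_{x_0}=\cap_{\phi\in\mathcal{S}}\phi(H^-(x_0))\supsetneq\Omega$. Since $P_{x_0}\supset\Omega$ we have $r_{P_{x_0}}\ge r_\Omega$ pointwise with strict inequality on a set of positive measure (because $\Omega$ is not a polytope), hence $V_q(P_{x_0})>V_q(\Omega)$ for $q>0$; moreover $\min h_{P_{x_0}}\ge\min h_\Omega=1$, since $B_1\subset\Omega\subset P_{x_0}$, so after renormalizing $P_{x_0}$ back into $\mathcal{K}_{min}(\mathcal{S})$ (dividing by $\min h_{P_{x_0}}\ge 1$) we still get a body with $V_q$ no smaller — in fact one must be slightly careful here, and the cleanest route is to argue directly with the scale-invariant functional $\mathcal{F}_{-\infty,q}$: $\mathcal{F}_{-\infty,q}(P_{x_0})/\mathcal{F}_{-\infty,q}(\Omega)$, and since $\min h_{P_{x_0}}\ge\min h_\Omega$ and $V_q(P_{x_0})>V_q(\Omega)$ the ratio may go the wrong way, so instead I would use that the \emph{inner radius is unchanged}: the ball $B_{r_\Omega}$ realizing $\min h_\Omega$ is still inscribed in $P_{x_0}$ touching its boundary (the touching face of $\Omega$ survives in $P_{x_0}$ up to the group action), giving $\min h_{P_{x_0}}=\min h_\Omega$ and hence $\mathcal{F}_{-\infty,q}(P_{x_0})>\mathcal{F}_{-\infty,q}(\Omega)$, contradicting maximality. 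Therefore $\Omega^{(-\infty,q)}$ must be a group-invariant polytope.

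\textbf{Case $q<0$.} Now the exponent is negative, so $V_q(\Omega)=\int r_\Omega^q$ is \emph{decreasing} in $\Omega$; with the normalization $r_\Omega\ge 1$ we have $r_\Omega^q\le 1$, hence $V_q(\Omega)\le |\S^n|=(n+1)\omega_{n+1}$ with equality if and only if $r_\Omega\equiv 1$, i.e. $\Omega=B_1$. Unraveling the scaling: for general $\Omega\in\mathcal{K}_0(\mathcal{S})$, writing $m=\min_{\S^n}h_\Omega$ we have $r_\Omega(y)\ge m$ for all $y$ (since $B_m\subset\Omega$), so $r_\Omega^q\le m^q$, giving $\mathcal{F}_{-\infty,q}(\Omega)=V_q(\Omega)/m^q\le (n+1)\omega_{n+1}$, with equality exactly when $r_\Omega\equiv m$, i.e. $\Omega=B_m$. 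Thus every ball maximizes $\mathcal{F}_{-\infty,q}$ and, since the maximizer is unique up to scaling, $\Omega^{(-\infty,q)}=B_R$ for some $R>0$. One should double-check that balls do lie in $\mathcal{K}_0(\mathcal{S})$ (they are invariant under all of $O(n+1)$, hence under $\mathcal{S}$), so this maximizer is admissible.

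\textbf{Main obstacle.} The delicate point is entirely in the $q>0$ case: one must ensure that passing from $\Omega$ to the larger polytope $P_{x_0}$ strictly increases $\mathcal{F}_{-\infty,q}$, which requires that the enlargement does not also increase $\min h_\Omega$. The correct statement is that the tangent plane construction at a suitably chosen smooth point $x_0$ (or a point where $\Omega$ fails to be locally polyhedral) produces a supporting hyperplane of $\Omega$ that lies at distance $\ge 1$ from the origin, so $P_{x_0}\supset B_1$, $\min h_{P_{x_0}}\ge 1 = \min h_\Omega$; combined with the strict pointwise-a.e.\ inequality $r_{P_{x_0}}>r_\Omega$ and $q>0$ this gives $V_q(P_{x_0})>V_q(\Omega)$ and $\min h_{P_{x_0}}^q\ge\min h_\Omega^q$, but the ratio is only guaranteed to strictly increase if $\min h$ is exactly preserved — which holds because the point of $\partial\Omega$ at which the inscribed ball $B_m$ is tangent is itself a flat point lying on a supporting hyperplane, and that hyperplane (together with its $\mathcal{S}$-orbit) is among the defining half-spaces we could have used, so $B_m$ remains inscribed and touching in $P_{x_0}$. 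Making this last assertion airtight — choosing $x_0$ and verifying the inscribed ball is preserved — is the real content, and is handled exactly as in Lemma \ref{l4.3} together with the inner-radius bookkeeping from Lemma \ref{l4.2}.
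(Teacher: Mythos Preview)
Your proposal is correct. For part~(1) you follow essentially the paper's approach: the paper simply says ``can be proven as that in Theorem~\ref{t4.2}'', which is exactly the Lemma~\ref{l4.3} enlargement argument you outline. Your discussion of the inner-radius bookkeeping is more detailed than the paper's, though slightly tangled---the clean way to say it is to \emph{choose} $x_0$ from the outset to be a contact point of the inscribed ball $B_m$; then every face of $P_{x_0}=\bigcap_{\phi\in\mathcal{S}}\phi(H^-(x_0))$ lies at distance exactly $m$ from the origin, so $\min h_{P_{x_0}}=m=\min h_\Omega$ automatically.

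For part~(2) you take a genuinely different route from the paper. The paper passes to the polar body $\Omega^*$ and rewrites $\mathcal{F}_{-\infty,q}(\Omega)=\int_{\S^n}h_{\Omega^*}^{-q}\big/(\max_{\S^n}r_{\Omega^*})^{-q}$, then normalizes $\max r_{\Omega^*}=1$ so that $h_{\Omega^*}\le 1$ and hence $\int h_{\Omega^*}^{-q}\le(n+1)\omega_{n+1}$ with equality only for the ball. Your argument is the direct, undualized version of the same computation: from $B_m\subset\Omega$ you get $r_\Omega\ge m$, hence $r_\Omega^q\le m^q$ for $q<0$, and integrate. The two are equivalent under $h_{\Omega^*}=1/r_\Omega$, but yours is shorter and avoids introducing polar bodies at all, which is a net gain here since nothing else in the proof uses them.
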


\begin{proof}
Part (1) can be proven as that in Theorem \ref{t4.2}.
For part (2), let $\Omega^*$  be the polar body of $\Omega$. Then,
$$
\mathcal{F}_{-\infty,q}(\Omega)=\mathcal{F}^*_{-q}(\Omega^*)
        :=\left(\int_{{\mathbb{S}}^n}h_{\Omega^*}^{-q}\right)
{\Big/}(\max_{{\mathbb{S}}^n}r_{\Omega^*})^{-q}, \ \ \Omega\in{\mathcal{K}}_0({\mathcal{S}}).
$$
Noting that the functional $\mathcal{F}^*_{-q}(\cdot)$ is invariant under scaling, we may assume that $\max_{{\mathbb{S}}^n}r_{\Omega^*}=1$. In this case, we have
$$
\mathcal{F}^*_{-q}(\Omega^*)=\int_{{\mathbb{S}}^n}h_{\Omega^*}^{-q}\le (n+1)\omega_{n+1},
$$
with equality if and only if $\Omega^*={B_1}$. This implies that the extremal body $\Omega^*$ maximizing the functional $\mathcal{F}^*_{-q}(\cdot)$ must be a ball. Hence, we conclude that $\Omega^{(-\infty,q)}$ is also a ball by duality. The proof is completed.
\end{proof}

\begin{lemm}\label{l6.3}
 Let ${\mathcal{S}}$ be a discrete subgroup of $O(n+1)$ satisfying the spanning property. For each $p\neq0$, the limiting shape $\Omega^{(p,+\infty)}$ of $\Omega^{(p,q)}$ as $q\to+\infty$  is a group invariant polytope.
\end{lemm}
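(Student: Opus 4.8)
The plan is to mimic the structure of Section~4, replacing the role of the exponent $p\to-\infty$ with $q\to+\infty$ and adapting the variational functional accordingly. First I would record the relevant scaling: since $\mathcal F_{p,q}(\Omega) = V_q(\Omega)\big(\int_{\mathbb S^n}fh_\Omega^p/\int_{\mathbb S^n}f\big)^{-q/p}$ is invariant under $\Omega\mapsto\lambda\Omega$, I may normalize the maximizers $\Omega^{(p,q)}$ — say by requiring $\int_{\mathbb S^n}fh^p_{\Omega^{(p,q)}}=\int_{\mathbb S^n}f$, so that $\Omega^{(p,q)}\in\mathcal K_p(\mathcal S)$ and $\mathcal F_{p,q}(\Omega^{(p,q)})=V_q(\Omega^{(p,q)})$. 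The a priori bounds from Lemmas \ref{l3.1} and \ref{l3.2} (which hold for fixed $p<-n$ and are independent of $q$) then give uniform Lipschitz control of the support functions, so by Blaschke's selection theorem $\Omega^{(p,q)}$ sub-converges, as $q\to+\infty$, to a limit body $\Omega^{(p,+\infty)}\in\mathcal K_0(\mathcal S)$.

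Next I would identify $\Omega^{(p,+\infty)}$ as a maximizer of a limiting functional. The key analytic input is that as $q\to+\infty$,
\[
V_q(\Omega)^{1/q}=\Big(\int_{\mathbb S^n}r_\Omega^q(y)\,dy\Big)^{1/q}\longrightarrow \max_{\mathbb S^n}r_\Omega=:R_\Omega,
\]
the circumradius of $\Omega$, while $\big(\int_{\mathbb S^n}fh_\Omega^p/\int_{\mathbb S^n}f\big)^{-1/p}$ is a fixed positive number for fixed $p$. Thus $\mathcal F_{p,q}(\Omega)^{1/q}\to \mathcal G_p(\Omega):= R_\Omega\big(\int_{\mathbb S^n}fh_\Omega^p/\int_{\mathbb S^n}f\big)^{-1/p}$, and — exactly as in Lemma \ref{l4.1} — one shows that the best constants $C(n,p,q,f,\mathcal S)^{1/q}$ converge to the best constant of $\mathcal G_p(\cdot)$ over $\mathcal K_0(\mathcal S)$, and that $\Omega^{(p,+\infty)}$ realizes this supremum. (One direction is Fatou/monotonicity of $L^q$ norms; the other is the pointwise limit above together with an equicontinuity argument for the $q$-family of functionals near the limit body, as in Lemma \ref{l5.2}.) After the normalization $\int_{\mathbb S^n}fh^p_{\Omega^{(p,+\infty)}}=\int_{\mathbb S^n}f$ the relevant functional to maximize is simply the circumradius $R_\Omega$ on $\mathcal K_p(\mathcal S)$.

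Finally I would run the polytope-exclusion argument. Suppose $\Omega^{(p,+\infty)}$ is not a polytope. Pick a boundary point $x_0\in\partial\Omega^{(p,+\infty)}$ lying on the circumsphere, i.e. $|x_0|=R_\Omega$ — if $\Omega$ is not a polytope, one can choose such a point together with a supporting hyperplane $H(x_0)$ at which $\partial\Omega$ is not locally flat — let $H^-(x_0)$ be the closed half-space containing $\Omega$, and set $P_{x_0}=\bigcap_{\phi\in\mathcal S}\phi(H^-(x_0))$, a group-invariant polytope containing $\Omega$. Since $P_{x_0}\supset\Omega$ strictly contains it on a neighborhood of $x_0$, after rescaling $P_{x_0}$ back into $\mathcal K_p(\mathcal S)$ (which shrinks it by a factor $<1$ because $\int fh^p$ is strictly larger for a strictly larger body and $p<0$) one still gets $R_{P_{x_0}}$ strictly larger than $R_\Omega$ — here the circumradius is attained at (a small perturbation of) the far point $x_0$, which is pushed outward. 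This contradicts maximality, so $\Omega^{(p,+\infty)}$ must be a group-invariant polytope.

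The main obstacle I anticipate is the last comparison step: unlike the volume functional $V(\cdot)$ in Lemma \ref{l4.3}, the circumradius $R_\Omega$ is not monotone in a way that survives the renormalization into $\mathcal K_p(\mathcal S)$ for free — one must check carefully that enlarging $\Omega$ to $P_{x_0}$ increases $R_\Omega$ by strictly more than the renormalization factor decreases it. This is where the argument genuinely uses $p<0$ (so that a larger body has a larger value of $\int fh^p$ and hence gets scaled down, not up) and the fact that the enlargement near $x_0$ is quantitative. An alternative, possibly cleaner, route is the duality of Remark \ref{r6.1}: the statement $q\to+\infty$ for $\mathcal F_{p,q}$ is dual to $p\to-\infty$ for $\mathcal F^*_{-q,-p}$, so one could deduce Lemma \ref{l6.3} directly from Theorem \ref{t4.2} applied to polar bodies, modulo translating ``polytope'' through polarity; I would mention this as the shortcut but carry out the direct argument for clarity.
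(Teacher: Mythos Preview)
Your overall strategy --- pass to the $1/q$-th root, identify the limiting functional as the circumradius $R_\Omega$ times a fixed $p$-dependent factor, show the limit body maximizes it, then run a polytope-exclusion argument --- matches the paper's. The gap is in the last step, and it is exactly the obstacle you flag.

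You build the \emph{circumscribed} polytope $P_{x_0}=\bigcap_{\phi\in\mathcal S}\phi(H^-(x_0))\supset\Omega$ and then try to track what the renormalization into $\mathcal K_p(\mathcal S)$ does to its circumradius. First, your parenthetical justification has the wrong sign: for $p<0$ a strictly larger body has strictly \emph{smaller} $\int f h^p$, not larger (your conclusion that one must shrink is nonetheless correct). Second, and more seriously, after shrinking by $\lambda<1$ there is no clean reason why $\lambda R_{P_{x_0}}>R_\Omega$; the circumradius of $P_{x_0}$ is attained at a vertex, not at $x_0$, and the competition between the gain in $R$ and the loss from $\lambda$ is genuinely delicate. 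The paper avoids this entirely by going the other way: normalize by the circumradius ($\max_{\mathbb S^n} r_\Omega=1$) rather than by $\int f h^p$, take $x_0$ a point realizing the circumradius, and use the \emph{inscribed} polytope
\[
P_{x_0}=\operatorname{conv}\{\phi(x_0):\ \phi\in\mathcal S\}\subset\Omega.
\]
This $P_{x_0}$ has the \emph{same} circumradius $1$ but $h_{P_{x_0}}\le h_\Omega$ with strict inequality somewhere whenever $\Omega$ is not a polytope, so for $p<0$ one gets $\int f h_{P_{x_0}}^p>\int f h_\Omega^p$ immediately --- no rescaling balance to check. In your normalization the same inscribed polytope works just as cleanly: $R_{P_{x_0}}=R_\Omega$ and renormalizing into $\mathcal K_p(\mathcal S)$ now \emph{dilates} $P_{x_0}$, strictly increasing its circumradius.

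You should also note that your argument, as written, only treats $p<0$ (indeed you invoke Lemmas~\ref{l3.1}--\ref{l3.2}, which require $p<-n$), whereas the statement is for all $p\ne 0$. The paper handles $p>0$ separately: maximizing $\mathcal F_{p,q}$ becomes \emph{minimizing} the auxiliary functional $\big(\int f h_\Omega^p/\int f\big)/(\max r_\Omega)^p$, and the same inscribed polytope $P_a=\operatorname{conv}\{\phi(a)\}$ (with $r_\Omega(a)=1$) is then the minimizer, since now smaller $h$ gives smaller $\int f h^p$. The duality route you mention via Remark~\ref{r6.1} is a reasonable alternative, but the direct inscribed-polytope argument is short enough that the paper does not use it.
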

\begin{proof}
When $p<0$ and $q>n$, the maximizer of $\mathcal{F}_{p,q}(\cdot)$ is also the maximizer of the functional
 $$
  \mathcal{F}^*_{p,q}(\Omega):=\left(\int_{{\mathbb{S}}^{n}}r_\Omega^{q}\right)^{-p/q}\left(\int_{{\mathbb{S}}^{n}}fh_\Omega^p{\Big/}
  \int_{{\mathbb{S}}^{n}}f\right).
 $$
As $q\to+\infty$, we conclude that $\Omega^{(p,+\infty)}$ is the maximizer of the limiting functional
  $$
  \mathcal{F}^*_{p,+\infty}(\Omega):=
  \left(\int_{{\mathbb{S}}^{n}}fh_\Omega^p{\Big/}\int_{{\mathbb{S}}^{n}}f\right)
  {\Big/}(\max_{{\mathbb{S}}^n}r_\Omega)^p.
  $$
Noting that the function $\mathcal{F}^*_{p,+\infty}(\cdot)$ is invariant under scaling, we may assume that $\max r_\Omega=\max h_\Omega=h(x_0)=1$ for some $x_0\in \mathbb{S}^n$.
Let $P_{x_0}=\text{conv}\{\phi(x_0):\phi\in{\mathcal{S}}\}\subset {B_1}$ be a group invariant polytope.
It is clear that $P_{x_0}$ is contained inside of $\Omega$ by convexity. Thus,
   $$
   \mathcal{F}^*_{p,+\infty}(\Omega)<\mathcal{F}^*_{p,+\infty}(P_{x_0})
   $$
as long as $\Omega$ is not a polytope. Therefore, the limiting shape $\Omega^{(p,+\infty)}$ can only be a polytope for $p<0$.

When $p>0$ and $q>n$, the maximizer of $\mathcal{F}_{p,q}(\cdot)$ is also the minimizer of the functional $\mathcal{F}^*_{p,q}(\cdot)$. Passing to the limit as $q\to+\infty$, we conclude that $\Omega^{(p,+\infty)}$ is the minimizer of the limiting functional $\mathcal{F}^*_{p,+\infty}(\cdot)$. Without loss of generality, we may also assume that $\max r_\Omega=r_\Omega(a)=1$ for some $a\in \mathbb{S}^n$. Then, it is clear that the minimizer of the functional $\mathcal{F}^*_{p,+\infty}(\cdot)$ is attained at $P_a$ for $p>0$.
\end{proof}

\begin{proof}[Proof of Theorem \ref{t1.3}]
 Let $\Omega^{(p,q)}$ be the solution obtained in Proposition \ref{p6.1} with the support function $h^{(p,q)}$. Along the same lines as in Corollary \ref{c3.1}, we have for $q\neq 0$
   \begin{equation}\label{e6.3}
     \lim_{p\to-\infty}\min_{{\mathbb{S}}^n}h^{(p,q)}=1.
   \end{equation}
This, together with Lemma \ref{l6.2}, yields the cases (1) and (2) of Theorem \ref{t1.3}.

 Let $\Omega^{(p,q)}$ be the solution obtained in Proposition \ref{p6.1} with the support function $h^{(p,q)}$ for $p\not=0$ and $q>n$ as stated in Remark \ref{r6.1}. Along the same lines as in Corollary \ref{c3.1}, we have for $p\neq 0$
   \begin{equation*}
     \lim_{q\to  +\infty}\min_{{\mathbb{S}}^n}h^{(p,q)}=1.
   \end{equation*}
This, together with Lemma \ref{l6.3}, yields the case (3) of Theorem \ref{t1.3}.
\end{proof}

\vspace{10pt}

\section{The proof of Theorem \ref{t1.4}}

This section aims to prove Theorem \ref{t1.4}. Firstly, let us consider $q>0$.
In this case, we have the following result, analogous to Lemma \ref{l5.1}.

\begin{lemm}\label{l7.1}
For $q>0$, a regular polytope $T$ is a strict local maximal of the functional $V_q$
in the family ${\mathcal{K}}_{min}(\delta,{\mathcal{S}}_T)$ for some $\delta>0$.
Namely,  there exists a positive constant $\delta=\delta_{q,T}$
such that for any $\Omega'\in{\mathcal{K}}_{min}(\delta,{\mathcal{S}}_T)\setminus\{T\}$, there holds
    \begin{equation}\label{e7.1}
      V_q(\Omega')<V_q(T).
    \end{equation}
\end{lemm}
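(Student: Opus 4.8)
The plan is to mimic the proof of Lemma \ref{l5.1}, replacing the volume functional $V(\cdot)$ by the $q$-th dual volume $V_q(\Omega)=\int_{\mathbb{S}^n}r_\Omega^q(y)\,dy$, and exploiting the same subdivision of the regular polytope $T$ into congruent simplices $\Omega_n$ dictated by the flag-transitivity of $\mathcal{S}_T$. By the high symmetry, it suffices to show that $T$ strictly dominates every competitor $\Omega'\in\mathcal{K}_{min}(\delta,\mathcal{S}_T)$ locally on a single fundamental piece $D_n\subset\mathbb{S}^n$ (the radial projection of $\Omega_n$). On this piece the face of $T$ is the tangent hyperplane $H_1$ at $O_n=e_{n+1}$, with radial function $\rho_1(y)=1/y_{n+1}$, while for any $\Omega'\in\mathcal{K}_{min}(\delta,\mathcal{S}_T)$ the constraint $\inf h_{\Omega'}=1$ and proximity to $T$ force, on the piece of $\partial\Omega'$ realizing the constraint at a point $y_0$, that $r_{\Omega'}(y)\le \rho_2(y)=1/\langle y,y_0\rangle$, with equality exactly when $\Omega'$ agrees with the tangent hyperplane $H_2$ at $y_0$.

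First I would record the pointwise comparison $r_{\Omega'}\le \rho_2$ on $D_n$ (as in \eqref{e5.6}), integrate the $q$-th powers over $D_n$ with the volume element $d\sigma=(1+|\overline z|^2)^{-(n+1)/2}d\overline z$ computed in the proof of Lemma \ref{l5.1}, and reduce to showing positivity of the dual-volume difference function
\begin{equation*}
\widetilde{V}_q(y_0)=\int_{D_n}\left(\frac{1}{y_{n+1}^{\,q}}-\frac{1}{|\langle y,y_0\rangle|^{\,q}}\right)\frac{d\sigma}{\cdots}
\end{equation*}
— more precisely, writing $w_0=(\overline w_0,1)=y_0/y_{0,n+1}\in\Omega_n$, one gets a function of the form
\begin{equation*}
\overline{V}_q(\overline w_0)=\int_{\Omega_n}\left(1-\left|\frac{\sqrt{1+|\overline w_0|^2}}{1+\langle \overline z,\overline w_0\rangle}\right|^{\,q}\,(\text{harmless positive weight})\right)d\overline z ,
\end{equation*}
with $\overline{V}_q(0)=0$, and it suffices to prove $\overline{V}_q(\overline w_0)>0$ for all small $\overline w_0\in\Omega_n\setminus\{0\}$. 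As in Step 3 of Lemma \ref{l5.1}, I would parametrize along rays $\overline w_0=ra$, $a\in\mathbb{R}^n_+$, differentiate in $r$ at $r=0$, and obtain
\begin{equation*}
\frac{d}{dr}\Big|_{r=0}\overline{V}_q(ra)=q\int_{\Omega_n}\langle \overline z,a\rangle\,(\text{positive weight})\,d\overline z>0
\end{equation*}
because $\Omega_n\subset\mathbb{R}^n_+$; since $q>0$ this derivative is strictly positive, giving $\overline{V}_q(ra)>\overline{V}_q(0)=0$ for small $r>0$, hence the strict inequality \eqref{e7.1} after summing over the finitely many congruent pieces and using a uniform (over directions $a$ on the compact sphere, and over the finitely many fundamental pieces) choice of $\delta=\delta_{q,T}$.

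The main obstacle, compared with the $q=n+1$ case of Lemma \ref{l5.1}, is that for general $q$ the competitor's dual volume on the fundamental cone is no longer literally the Euclidean volume of a solid region, so the clean identities $V_{D_n}(T)=V_{D_n}(H_1^-)$ and $V_{D_n}(\Omega')\le V_{D_n}(H_2^-)$ must be replaced by a direct integral estimate; one must check carefully that the weight multiplying $|\langle \overline z,\overline w_0\rangle|$-type terms stays bounded away from $0$ and $\infty$ on $\Omega_n$ for $\overline w_0$ near $0$, so that the first-order term genuinely controls the sign. I also need the constraint $\inf_{\mathbb{S}^n}h_{\Omega'}=1$ to pin down the contact point $y_0$ on each fundamental piece and to guarantee $r_{\Omega'}\le 1/\langle\cdot,y_0\rangle$ there; this is where the normalization defining $\mathcal{K}_{min}(\delta,\mathcal{S}_T)$ and the closeness $\dist(\Omega',T)<\delta$ are both used. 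Once positivity of $\overline{V}_q$ near $0$ is established, \eqref{e7.1} follows exactly as in the conclusion of Lemma \ref{l5.1}, completing the proof.
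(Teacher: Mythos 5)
Your proposal is correct and follows essentially the same route as the paper: the same subdivision of $T$ into congruent simplices $\Omega_n$, the same reduction to the sign of a volume-difference function $\overline{V}_q(\overline{w}_0)$ on $\Omega_n$ via the tangent-plane comparison and the change of variables $y=z/|z|$, and the same conclusion from differentiating along rays $\overline{w}_0=ra$ at $r=0$ and using $\Omega_n\subset\mathbb{R}^n_+$ together with $q>0$. (For the record, the ``harmless positive weight'' is $(1+|\overline z|^2)^{(q-n-1)/2}$ and it multiplies the entire difference $1-|\cdot|^q$, not just the second term — as your claim $\overline{V}_q(0)=0$ already implicitly requires.)
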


\begin{proof}
Similar to the proof of Claim $\sharp1$ in  Lemma \ref{l5.1},
we can check that for some $\delta>0$,
$$
V_q(T)=\sup_{\Omega\in{\mathcal{K}}_{min}(\delta,{\mathcal{S}}_T)}V_q(\Omega)
$$
if and only if
  \begin{equation}\label{e7.2}
  \displaystyle  \overline{V}_q(\overline{w}_0)=\int_{\Omega_n}(1+|\overline{z}|^2)^{\frac{q-n-1}{2}}
  \bigg(1-\bigg|\frac{\sqrt{1+|\overline{w}_0|^2}}{1+\langle \overline{z},\overline{w}_0\rangle}\bigg|^{q}\bigg)d\overline{z}>0,
  \end{equation}
 for all $\overline{w}_0\in\Omega_n\setminus\{0\}$ closing to $0$.

Let $w_0=(\overline{w}_0,1)$ approach $O_n=e_{n+1}$ but $w_0\ne O_n$. Along the direction segment $I=\{O_n+ra\in\Omega_n| r\geq0, a\in{\mathbb{R}}^n_+\}$, the volume difference function can be written as
    $$
     V_q(r,a):=\overline{V}_q(\overline{w}_0)
     =\int_{\Omega_n}(1+|\overline{z}|^2)^{\frac{q-n-1}{2}}
       \bigg(1-\bigg|\frac{\sqrt{1+r^2|a|^2}}{1+r\langle \overline{z},a\rangle}\bigg|^{q}\bigg)d\overline{z}.
    $$
Then by the fact that $\Omega_{n}\subset{\mathbb{R}}^{n}_+$, we have
    \begin{equation}\nonumber
     \frac{d}{dr}\bigg|_{r=0}V_q(r,a)=q\int_{\Omega_n}(1+|\overline{z}|^2)^{\frac{q-n-1}{2}}\langle\overline{z},a\rangle d\overline{z}>0,
        \ \ \forall a\in{\mathbb{R}}^{n}_+.
    \end{equation}
This, together with \eqref{e7.2}, gives the desired inequality \eqref{e7.1}.
\end{proof}

To complete the proof of Theorem \ref{t1.4} for the case $q>0$, we shall consider the local variational scheme
    \begin{equation}\label{e7.3}
       \sup_{\Omega\in{\mathcal{K}}_{p,f}(\delta,{\mathcal{S}}_T)}V_q(\Omega).  \end{equation}
Then we will show the existence of maximizer $\Omega^{(p,q)}\in{\mathcal{K}}_{p,f}(\delta,{\mathcal{S}}_T)$ of \eqref{e7.3} for negative large $p$, whose support function $h^{(p,q)}$ satisfies the Euler-Lagrange equation \eqref{e1.1} up to a constant.

\begin{lemm}\label{l7.2}
 Let $q>0$ and $T\subset{\mathbb{R}}^{n+1}$ be a regular polytope tangential to the unit sphere ${\mathbb{S}}^n$.
 If $\delta=\delta_{q,T}>0$ is small and $p$ is negative large, the variational scheme \eqref{e7.3} admits
 a maximizer $\Omega^{(p,q)}\not\in \p {\mathcal{N}}_\delta(T)$.
 Moreover, the maximizer $\Omega^{(p,q)}$ satisfies the equation \eqref{e1.1}, and converges to the given regular polytope $T$ as $p\rightarrow -\infty$.
\end{lemm}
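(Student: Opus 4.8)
\textbf{Proof proposal for Lemma \ref{l7.2}.}
The plan is to run the same two-step scheme as in Lemmas \ref{l5.2}--\ref{l5.3}, but with the volume functional $V$ replaced by $V_q$ and the limiting functional $\mathcal{F}_{-\infty}$ replaced by
$\mathcal{F}_{-\infty,q}(\Omega)=V_q(\Omega)/(\min_{\mathbb{S}^n}h_\Omega)^q$.
By Lemma \ref{l7.1}, the regular polytope $T$ is a strict local maximizer of $V_q$ on the normalized family ${\mathcal{K}}_{min}(\delta,{\mathcal{S}}_T)$ for some $\delta=\delta_{q,T}>0$; since on ${\mathcal{K}}_{min}(\delta,{\mathcal{S}}_T)$ one has $\min h_\Omega\equiv1$, this is the same as saying $\mathcal{F}_{-\infty,q}(\Omega)<\mathcal{F}_{-\infty,q}(T)$ for all $\Omega\in{\mathcal{K}}_{min}(\delta,{\mathcal{S}}_T)\setminus\{T\}$. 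By a-priori Lipschitz bounds on $\Omega\in{\mathcal{K}}(\delta,{\mathcal{S}}_T):=\cup_{p<-n-1}{\mathcal{K}}_{p,f}(\delta,{\mathcal{S}}_T)$ the boundary $\partial{\mathcal{K}}(\delta,{\mathcal{S}}_T)$ is sequentially compact under Hausdorff distance and $\mathcal{F}_{-\infty,q}$ is continuous there, so
$$
\sup_{\Omega\in\partial{\mathcal{K}}_{min}(\delta,{\mathcal{S}}_T)}\mathcal{F}_{-\infty,q}(\Omega)\le\mathcal{F}_{-\infty,q}(T)-\sigma
$$
for some $\sigma>0$.

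Next I would establish equi-continuity of the functionals $\mathcal{F}_{p,q}(\cdot)$ for negative large $p$ on ${\mathcal{K}}(\delta,{\mathcal{S}}_T)$, exactly as in Lemma \ref{l5.2}: the factor $V_q(\Omega)=\int_{\mathbb{S}^n}r_\Omega^q$ is Lipschitz in $\Omega$ for fixed $q$ by the a-priori bounds, and the factor $\big(\int_{\mathbb{S}^n}fh_\Omega^p\big)^{-q/p}=\|h_\Omega^{-1}\|_{L^{|p|}_f}^{q}$ is handled by Minkowski's inequality in $L^{|p|}_f(\mathbb{S}^n)$, giving equi-continuity uniformly in $p$. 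Together with pointwise convergence $\mathcal{F}_{p,q}(\Omega)\to\mathcal{F}_{-\infty,q}(\Omega)$ and compactness of $\partial{\mathcal{K}}(\delta,{\mathcal{S}}_T)$, this yields uniform convergence $\sup_{\partial{\mathcal{K}}}|\mathcal{F}_{p,q}-\mathcal{F}_{-\infty,q}|\to0$, hence
$$
\sup_{\Omega\in\partial{\mathcal{K}}_{p,f}(\delta,{\mathcal{S}}_T)}\mathcal{F}_{p,q}(\Omega)\le\mathcal{F}_{-\infty,q}(T)-\sigma+\varepsilon(p),
$$
while on the interior, using $C(n,p,q,f,{\mathcal{S}}_T)\to C(n,-\infty,q,f,{\mathcal{S}}_T)=\mathcal{F}_{-\infty,q}(T)$ (which follows from Lemma \ref{l7.1} and the convergence ${\mathcal{K}}_{p,f}(\delta,{\mathcal{S}}_T)\to{\mathcal{K}}_{min}(\delta,{\mathcal{S}}_T)$), one gets $\sup_{\Omega\in{\mathcal{K}}_{p,f}(\delta,{\mathcal{S}}_T)}V_q(\Omega)=\mathcal{F}_{-\infty,q}(T)+\varepsilon(p)$. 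Comparing, the boundary supremum is strictly below the interior supremum for $p$ negative large, so a maximizer $\Omega^{(p,q)}$ exists with $\Omega^{(p,q)}\notin\partial{\mathcal{N}}_\delta(T)$.

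Since $\Omega^{(p,q)}$ lies in the interior of ${\mathcal{N}}_\delta(T)$, the first-variation argument is unconstrained and one derives the Euler--Lagrange equation: as in the proof of Lemma \ref{p3.1} (adapting \cite{CCL21,CW06}), the first variation along Wulff shapes shows the Monge--Amp\`ere measure is bounded above, duality of polar bodies gives the lower bound, Caffarelli's regularity \cite{caffarelli1990localization,caffarelli1990interior} gives strict convexity and $C^{1,\alpha}$ (resp. $C^{2,\alpha}$ for H\"older $f$), and projecting onto the group-invariant Hilbert space $\mathbb{H}({\mathcal{S}}_T)$ forces $h^{(p,q)}$ to solve \eqref{e1.1} (the extra exponent $(n+1-q)/2$ from $V_q$ appears automatically in the first variation of $\int r^q$). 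Finally, $C(n,p,q,f,{\mathcal{S}}_T)\to\mathcal{F}_{-\infty,q}(T)$ forces $\Omega^{(p,q)}$ to sub-converge to a maximizer of $\mathcal{F}_{-\infty,q}$ on ${\mathcal{K}}_{min}(\delta,{\mathcal{S}}_T)$, which by Lemma \ref{l7.1} is the unique point $T$; since the limit is independent of the subsequence, $\Omega^{(p,q)}\to T$ as $p\to-\infty$. The main obstacle is the equi-continuity estimate for the $V_q$-factor combined with the $\|h^{-1}\|_{L^{|p|}_f}$-factor uniformly in $p$, and verifying that the a-priori Lipschitz bounds on the constrained family ${\mathcal{K}}_{p,f}(\delta,{\mathcal{S}}_T)$ survive the normalization $\int fh^p=\int fh_T^p$ so that the constants $\sigma,\varepsilon(p)$ do not degenerate as $p\to-\infty$.
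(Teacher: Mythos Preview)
Your proposal is correct and follows essentially the same approach as the paper's own proof: both invoke Lemma \ref{l7.1} to get the strict gap $\sup_{\partial\mathcal{K}_{min}(\delta,\mathcal{S}_T)}\mathcal{F}_{-\infty,q}\le\mathcal{F}_{-\infty,q}(T)-\sigma$, then transfer this gap to $\mathcal{F}_{p,q}$ for negative large $p$ exactly as in Lemma \ref{l5.2}, conclude the maximizer stays off $\partial\mathcal{N}_\delta(T)$, derive the Euler--Lagrange equation \eqref{e1.1} as in the global problem, and identify the limit as $T$ via the uniqueness in Lemma \ref{l7.1}. The paper's proof is in fact more terse than yours, simply writing ``Similar to the proof of Lemma \ref{l5.2}'' where you spell out the equi-continuity argument; your flagged ``main obstacle'' is handled in the paper by the same compactness-plus-Minkowski-inequality reasoning already used in Section 5.
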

\begin{proof}
The existence of maximizer $\Omega^{(p,q)}$ to the variational scheme \eqref{e7.3} follows from {\it a-priori} estimates for group invariant polytopes.

Next, we show that the property
that $\Omega^{(p,q)}\not\in \p {\mathcal{N}}_\delta(T)$.
In fact, by Lemma \ref{l7.1}, we know that $T$ is a local strictly maximizer of the functional ${\mathcal{F}}_{-\infty,q}(\cdot)$
on the family ${\mathcal{K}}_{min}(\delta,{\mathcal{S}}_T)$ for some $\delta>0$.
Hence, there exists a positive constant $\delta=\delta_{q,T}$ such that
   \begin{equation}\label{e7.4}
      {\mathcal{F}}_{-\infty,q}(\Omega)<{\mathcal{F}}_{-\infty,q}(T), \ \ \forall\Omega\in{\mathcal{K}}_{min}(\delta,{\mathcal{S}}_T)\setminus T.
   \end{equation}
Thus we have that for some $\sigma>0$,
    \begin{equation}\label{e7.5}
      \sup_{\Omega\in\partial{\mathcal{K}}_{min}(\delta,{\mathcal{S}}_T)}
      {\mathcal{F}}_{-\infty,q}(\Omega)\leq{\mathcal{F}}_{-\infty,q}(T)-\sigma,
    \end{equation}
and
  \begin{equation}\label{e7.6}
      \lim_{p\rightarrow -\infty}
      {\mathcal{F}}_{p,q}(\Omega)={\mathcal{F}}_{-\infty,q}(\Omega).
    \end{equation}
Similar to the proof of Lemma \ref{l5.2}, we have
\begin{eqnarray*}
   \sup_{\Omega\in\partial{\mathcal{K}}_{p,f}(\delta,{\mathcal{S}}_T)}
   {\mathcal{F}}_{p,q}(\Omega)
   \leq\sup_{\Omega\in{\mathcal{K}}_{p,f}(\delta,{\mathcal{S}}_T)}V_q(\Omega)-\sigma
     +\varepsilon(p)
   \end{eqnarray*}
holds for negative large $p$ and some infinitesimal $\varepsilon(p)$, and hence
    \begin{equation}\label{e7.7}
      \sup_{\Omega\in\partial{\mathcal{K}}_{p,f}(\delta,{\mathcal{S}}_T)}V_q(\Omega)
      \leq\sup_{\Omega\in{\mathcal{K}}_{p,f}(\delta,{\mathcal{S}}_T)}
      V_q(\Omega)-\sigma+\varepsilon(p)
    \end{equation}
holds for small $\delta>0$ and negative large $p$.
This implies that the property that $\Omega^{(p,q)}$
stays away from the boundary of ${\mathcal{N}}_\delta(T)$.
Then we can prove that $\Omega^{(p,q)}$ satisfies the equation \eqref{e1.1}
as in the proof of the variational problem \eqref{e6.1}.
Since
$$
\lim_{p\to-\infty}\sup_{\Omega\in{\mathcal{K}}_{p,f}(\delta,{\mathcal{S}}_T)}V_q(\Omega)
=\sup_{\Omega\in{\mathcal{K}}_{min}(\delta,{\mathcal{S}}_T)}V_q(\Omega),
$$
by the compactness of ${\mathcal{K}}(\delta,{\mathcal{S}}_T)$.
Thus, the maximizer $\Omega^{(p,q)}$ converges to a maximizer $\Omega^{(-\infty,q)}$ of the limiting functional.
However, by Lemma \ref{l7.1}, $T$ is the unique maximizer of the limiting functional on
the family ${\mathcal{K}}_{min}(\delta,{\mathcal{S}}_T)$.
We reach the conclusion that $\Omega^{(-\infty,q)}=T$ by the observation
that ${\mathcal{F}}_{-\infty,q}(\Omega)=V_q(\Omega)$ for $\forall\ \Omega\in \mathcal{K}_{min}(\delta,\mathcal{S}_T)$.
This completes the proof.
\end{proof}

\begin{proof}[Proof of (1) in Theorem \ref{t1.4}.] Part (1) of Theorem \ref{t1.4} is a direct consequence of Lemma \ref{l7.2}.
\end{proof}

To prove the second part of Theorem \ref{t1.4}, we denote
   $$
      V_{p,f}(\Omega)=\int_{{\mathbb{S}}^n}fh_\Omega^p, \ \ \ p\not=0,\ f>0,
   $$
and
    $${\begin{split}
     {\mathcal{K}}_{max}({\mathcal{S}}_T) & =\big\{\Omega\in{\mathcal{K}}_0({\mathcal{S}}_T)\big| \max_{{\mathbb{S}}^n}h_\Omega=1\big\},\\
    {\mathcal{K}}_{max}(\delta,{\mathcal{S}}_T)&={\mathcal{K}}_{max}({\mathcal{S}}_T)\cap{\mathcal{N}}_\delta(T).
    \end{split}}
    $$

\begin{lemm}\label{l7.3}
 For any regular polytope $T\subset{\mathbb{R}}^{n+1}$ with outer radius one and positive group invariant function $f$ on ${\mathbb{S}}^n$, there exists a small constant $\delta=\delta_{p,f,T}>0$ such that
     \begin{equation}\label{e7.8}
      \begin{cases}
       V_{p,f}(T)<V_{p,f}(\Omega), & p>0, \ \ \forall\Omega\in{\mathcal{K}}_{max}(\delta,{\mathcal{S}}_T)\setminus\{T\},\\
        V_{p,f}(T)>V_{p,f}(\Omega), & p<0, \ \ \forall\Omega\in{\mathcal{K}}_{max}(\delta,{\mathcal{S}}_T)\setminus\{T\}.
      \end{cases}
     \end{equation}
\end{lemm}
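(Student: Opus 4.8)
The plan is to reduce Lemma \ref{l7.3} to a first-derivative computation, exactly as in Claim $\sharp1$--$\sharp2$ of Lemma \ref{l5.1}, but now with the fixed normalization $\max_{\S^n}h=1$ instead of $\min_{\S^n}h=1$. First I would carry over the subdivision of $T$ into congruent pieces $\Omega_n\subset\R^n_+$, together with the parametrization of a normalized group-invariant body $\Omega'\in\K_{max}(\delta,\S_T)$ near $T$ by the point $w_0=(\ol w_0,1)\in\Omega_n$ where the touching face of $\Omega'$ meets $\Omega_n$. Since $T$ is a regular polytope with outer radius one, each vertex of $T$ lies on $\S^n$; the high symmetry means that, after rotating so that the relevant vertex is $e_{n+1}$, the local picture is again governed by how the supporting hyperplane at $w_0$ compares with the supporting hyperplane at $e_{n+1}$ over the fundamental cell $\Omega_n$. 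So the functional $V_{p,f}$ localized to $\Omega_n$ becomes a weighted integral
\[
\ol V_{p,f}(\ol w_0)=\int_{\Omega_n} f\bigl(\tfrac{z}{|z|}\bigr)\,(1+|\ol z|^2)^{-\frac{n+1}{2}}\Bigl(\tfrac{1}{|\langle z,y_0\rangle|^{p}}-1\Bigr)\,d\ol z,
\qquad y_0=\tfrac{(\ol w_0,1)}{|(\ol w_0,1)|},
\]
with the sign of the integrand driven by whether $p>0$ or $p<0$, because near a vertex the support function of $\Omega'$ is $\le 1$ pointwise while that of $T$ equals $1$ at the vertex.

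Next I would mimic Step 3 of Lemma \ref{l5.1}: restrict to a ray $\ol w_0=ra$ with $a\in\R^n_+$, set $V_{p,f}(r,a):=\ol V_{p,f}(ra)$, and differentiate at $r=0$. Using $\langle z,y_0\rangle = (1+r\langle\ol z,a\rangle)/\sqrt{1+r^2|a|^2}$ one gets, after differentiating $|\langle z,y_0\rangle|^{-p}$ in $r$,
\[
\frac{d}{dr}\Big|_{r=0}V_{p,f}(r,a) = -p\int_{\Omega_n} f\bigl(\tfrac{z}{|z|}\bigr)(1+|\ol z|^2)^{-\frac{n+1}{2}}\langle\ol z,a\rangle\,d\ol z,
\]
and since $\Omega_n\subset\R^n_+$ and $f>0$ the integral is strictly positive. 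Hence $\frac{d}{dr}|_{r=0}V_{p,f}(r,a)<0$ when $p>0$ and $>0$ when $p<0$. Together with $V_{p,f}(0,a)=V_{p,f}(T)$ (localized), this shows that for small $r$ the localized value moves in the claimed direction; summing over the finitely many congruent cells and using the group invariance of $f$ and of $\Omega'$ gives \eqref{e7.8} with $\delta=\delta_{p,f,T}>0$ chosen uniformly small. The only point needing a touch of care beyond the model argument is that $f$ is merely measurable and bounded between $c_1,c_2$; but the first-variation identity above only uses $f>0$ a.e. and the integrand's sign is pointwise in $\ol z$, so the conclusion is unaffected — one could alternatively bound $f$ below by $c_1$ and above by $c_2$ to keep the derivative bounded away from zero.

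The main obstacle I anticipate is not the computation but justifying the reduction: one must check that when $\Omega'\in\K_{max}(\delta,\S_T)$ is close to $T$ in Hausdorff distance, its behaviour over each fundamental cell $\Omega_n$ is indeed captured by a single supporting hyperplane touching near the corresponding vertex $e_{n+1}$ — i.e. that the normalization $\max h_{\Omega'}=1$ forces $\Omega'$ to touch $\S^n$ near each vertex of $T$, and that $h_{\Omega'}\le 1$ there with equality essentially only at the touching point. This is where the regularity of $T$ (outer radius one, vertices on $\S^n$, transitive symmetry) is used, and it parallels the role played in Lemma \ref{l5.1} by tangency of the faces to $\S^n$; once this geometric normalization lemma is in place, the rest is the first-derivative sign computation above.
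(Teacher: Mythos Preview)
Your proposal has the right endgame---a first-derivative computation on a fundamental cell---but it skips a reduction that the paper treats as essential, and it misses a simplifying device.

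The gap you yourself flag is real and is not closed by your proposed geometric normalization. A general $\Omega\in{\mathcal K}_{max}(\delta,{\mathcal S}_T)$ is \emph{not} described over a fundamental domain by a single supporting hyperplane; only an orbit-hull $\Omega_a=\text{conv}\{\phi(a):\phi\in{\mathcal S}_T\}$ is. The paper's first move is precisely this reduction: since $\max h_\Omega=1$ is attained at some $a\in{\mathbb S}^n$, the point $a$ lies in $\Omega$, hence $\Omega_a\subset\Omega$, hence $h_{\Omega_a}\le h_\Omega$, and therefore $V_{p,f}(\Omega)\ge V_{p,f}(\Omega_a)$ for $p>0$ (reversed for $p<0$). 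This monotonicity step is what turns the problem into a genuine one-parameter comparison of $V_{p,f}(\Omega_a)$ with $V_{p,f}(T)$ for $a$ near a vertex of $T$. Without it, your ``single supporting hyperplane'' picture is simply false for, say, a slightly smoothed $T$ still in ${\mathcal K}_{max}(\delta,{\mathcal S}_T)$.

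The second difference is that the paper does not redo the subdivision with a vertex of $T$ at $e_{n+1}$. Instead it dualizes: $V_{p,f}(\Omega_a)=\int f\,h_{\Omega_a}^p=\int f\,r_{\Omega_a^*}^{-p}=:V^*_{-p,f}(\Omega_a^*)$, and likewise for $T$. Since $T$ has outer radius one, $T^*$ is tangential to ${\mathbb S}^n$, so the subdivision of Lemma~\ref{l5.1}/\ref{l7.1} applies verbatim to $T^*$, and the comparison becomes exactly \eqref{e7.2} with $q=-p$ and an extra weight $f$. Your attempt to place a vertex at $e_{n+1}$ and reuse the cell $\Omega_n$ of Lemma~\ref{l5.1} does not work as written, because that $\Omega_n$ is a piece of a \emph{face} in the hyperplane $\{z_{n+1}=1\}$, a structure absent around a vertex; duality is what restores the face picture. (As a side note, your integrand has the exponent reversed: on the normal cone of the vertex $a$ one has $h_{\Omega_a}(y)=\langle a,y\rangle$, so the localized difference involves $\langle z,y_0\rangle^{p}$, not $\langle z,y_0\rangle^{-p}$; with the correct sign the derivative at $r=0$ is $+p\int(\cdots)\langle\ol z,a\rangle\,d\ol z>0$ for $p>0$, consistent with the lemma.)
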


\begin{proof}
Firstly, let $p>0$ and assume $f$ is positive group invariant function. For any $\Omega\in{\mathcal{K}}_{max}(\delta,{\mathcal{S}}_T)$, there exists at least one $a\in{\mathbb{S}}^n$ such that
   \begin{equation*}
      h_\Omega(a)=\max_{{\mathbb{S}}^n}h_\Omega=1.
   \end{equation*}
Set $\Omega_a=\text{conv}(\cup_{\phi\in{\mathcal{S}}_T}\phi(a))$, it is clearly that
    \begin{equation}\label{e7.9}
    V_{p,f}(\Omega)\geq V_{p,f}(\Omega_a)
    \end{equation}
since $\Omega_a\subset\Omega$. We remain to show that
    \begin{equation}\label{e7.10}
      V_{p,f}(\Omega_a)>V_{p,f}(T)
    \end{equation}
if $\Omega_a$ approaches $T$ without being equal to $T$. Noting that $\Omega_a, T\in{\mathcal{K}}_{max}(\delta,{\mathcal{S}}_T)$, if one takes their dual bodies $\Omega_b$ (for some $b\in \mathbb{S}^n$), $T^*$ respectively, we have $\Omega_b, T^*\in{\mathcal{K}}_{min}(\delta,{\mathcal{S}}_{T^*})$ are closing from each other under the Hausdorff distance.
So, to prove \eqref{e7.10} is equivalent to proving
    \begin{equation}\label{e7.11}
     V^*_{-p,f}(\Omega_b)>V^*_{-p,f}(T^*),
    \end{equation}
where
$$
V^*_{q,f}(\Omega):=\int_{{\mathbb{S}}^n}f(y)r_\Omega^q(y)dy.
$$

The proof of \eqref{e7.11} is similar to the proof of \eqref{e5.8}. Actually, after subdivision of $T^*$ into congruent pieces similar to one polytope $\Omega_n\subset{\mathbb{R}}^{n}_+$, the comparison of $V^*_{-p,f}(\Omega_b)$ with $V^*_{-p,f}(T^*)$ can be reduced to the verification of the negativity of the function
\begin{equation*}
  \displaystyle  \overline{V}_{-p,f}(\overline{w}_0)
  =\int_{\Omega_n}f(1+|\overline{z}|^2)^{\frac{-p-n-1}{2}}
  \bigg(1-\bigg|\frac{\sqrt{1+|\overline{w}_0|^2}}{1+\langle \overline{z},\overline{w}_0\rangle}\bigg|^{-p}\bigg)d\overline{z}
\end{equation*}
for $\overline{w}_0\not=0$ approaching $0$. Along the direction segment $I=\{O_n+ra\in\Omega_n| r\geq0, a\in{\mathbb{R}}^n_+\}$, we have
   $$
   V_{-p,f}(r,a): = \overline{V}_{-p,f}(\overline{w}_0)=\int_{\Omega_n}f(1+|\overline{z}|^2)^{\frac{-p-n-1}{2}}\bigg(1-\bigg|\frac{\sqrt{1+r^2|a|^2}}{1+r\langle \overline{z},a\rangle}\bigg|^{-p}\bigg)d\overline{z}.
   $$
Therefore, it follows from
   \begin{equation}
    \frac{d}{dr}\bigg|_{r=0}V_{-p,f}(r,a)=-p\int_{\Omega_n}f(1+|\overline{z}|^2)^{\frac{-p-n-1}{2}}\langle\overline{z},a\rangle d\overline{z}<0
   \end{equation}
for $a\not=0$ that \eqref{e7.11} holds true.

After modifying the proof of \eqref{e7.11} slightly, we can also show that:
If $p<0$, for $\Omega_b\in{\mathcal{K}}_{min}(\delta,{\mathcal{S}}_{T^*}), b\in{\mathbb{S}}^n$ approaches $T^*$ without being equal to $T^*$, there holds
    \begin{equation*}
     V^*_{-p,f}(\Omega_b)<V^*_{-p,f}(T^*),
    \end{equation*}
which is equivalent to
\begin{equation*}
     V_{-p,f}(\Omega_a)<V_{-p,f}(T).
    \end{equation*}
This, together with the fact that
    \begin{equation}
     V_{p,f}(\Omega)\leq V_{p,f}(\Omega_a), \ \ \forall\ \Omega\in{\mathcal{K}}_{max}(\delta,{\mathcal{S}}_T),
    \end{equation}
gives the desired result for case $p<0$.
\end{proof}

With the help of Lemma \ref{l7.3}, we know that the regular polytope $T$ is of local strictly extremal of $V_{p,f}$ for $p\not=0$ and $f>0$. Considering the same local variational scheme \eqref{e7.3} and modifying the argument in Section 5 slightly, we reach the following result.

\begin{lemm}\label{l7.4}
 Let $p\not=0$ and $T\subset{\mathbb{R}}^{n+1}$ be a regular polytope tangential with outer radius one,
 if $\delta=\delta_{p,T}>0$ is small and $q$ is positive large,
 the variational scheme \eqref{e7.3} admits a maximizer $\Omega^{(p,q)}\not\in {\p\mathcal{N}}_\delta(T)$.
 Moreover, the maximizer $\Omega^{(p,q)}$ satisfies the equation \eqref{e1.1},
 and converges to a regular polytope similar to $T$ as $q\rightarrow +\infty$.
\end{lemm}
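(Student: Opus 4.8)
The plan is to follow the scheme of Lemmas \ref{l5.2}--\ref{l5.3} and \ref{l7.2}, exchanging the limits $p\to-\infty$ and $q\to+\infty$ and replacing the normalization $\min_{\mathbb{S}^n}h=1$ by $\max_{\mathbb{S}^n}h=1$. The first point is to pass to the scale-invariant functional: on the constraint family $\mathcal{K}_{p,f}(\mathcal{S}_T)=\{\Omega\in\mathcal{K}_0(\mathcal{S}_T):\int_{\mathbb{S}^n}fh_\Omega^p=\int_{\mathbb{S}^n}fh_T^p\}$ one has $V_q(\Omega)=\mathcal{F}_{p,q}(\Omega)\big(\int_{\mathbb{S}^n}fh_T^p/\int_{\mathbb{S}^n}f\big)^{q/p}$, so maximizing $V_q$ over $\mathcal{K}_{p,f}(\delta,\mathcal{S}_T)$ is the same as maximizing $\mathcal{F}_{p,q}$ there; and since $V_q(\Omega)^{1/q}=\|r_\Omega\|_{L^q(\mathbb{S}^n)}\to\max_{\mathbb{S}^n}r_\Omega=\max_{\mathbb{S}^n}h_\Omega$ (the circumradius of $\Omega$) as $q\to+\infty$, the limiting variational problem is governed by $\max_{\mathbb{S}^n}r_\Omega$ on the constraint surface, equivalently, after normalizing to $\mathcal{K}_{max}(\mathcal{S}_T)$, by the functional $\mathcal{F}^*_{p,+\infty}$ of Lemma \ref{l6.3}. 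Existence of a maximizer $\Omega^{(p,q)}$ of \eqref{e7.3} is then immediate: $\mathcal{K}_{p,f}(\delta,\mathcal{S}_T)$ is nonempty (it contains $T$) and, since $h_\Omega\ge\min_{\mathbb{S}^n}h_T-\delta>0$ on $\mathcal{N}_\delta(T)$, the constraint is continuous under Hausdorff convergence, so by Blaschke selection $\mathcal{K}_{p,f}(\delta,\mathcal{S}_T)$ is sequentially compact and $V_q$ attains its supremum.

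The main step is to show the maximizer stays away from $\partial\mathcal{N}_\delta(T)$ once $q$ is large, i.e. $\sup_{\partial\mathcal{K}_{p,f}(\delta,\mathcal{S}_T)}V_q<\sup_{\mathcal{K}_{p,f}(\delta,\mathcal{S}_T)}V_q$. By Lemma \ref{l7.3}, $T$ is a strict local extremum of $V_{p,f}(\Omega)=\int_{\mathbb{S}^n}fh_\Omega^p$ on $\mathcal{K}_{max}(\delta,\mathcal{S}_T)$ — a strict minimum if $p>0$ and a strict maximum if $p<0$ — so, shrinking $\delta$ if necessary and using compactness, this comes with a uniform gap: there is $\sigma>0$ with $V_{p,f}(\Omega')\ge V_{p,f}(T)+\sigma$ (resp. $\le V_{p,f}(T)-\sigma$) for all $\Omega'$ in a fixed closed annular region $A$ of $\mathcal{K}_{max}(\mathcal{S}_T)$ around $T$. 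Given $\Omega\in\partial\mathcal{N}_\delta(T)\cap\mathcal{K}_{p,f}(\mathcal{S}_T)$, set $R=\max_{\mathbb{S}^n}r_\Omega\in[1-\delta,1+\delta]$ and $\widetilde\Omega=\Omega/R\in\mathcal{K}_{max}(\mathcal{S}_T)$; a short computation with the constraint (if $\widetilde\Omega$ were close to $T$, then $R$ would be close to $1$ and hence $\Omega$ close to $T$, contradicting $d_H(\Omega,T)=\delta$) shows $\widetilde\Omega\in A$, while $V_{p,f}(\widetilde\Omega)=R^{-p}\int_{\mathbb{S}^n}fh_T^p$. Combining with the gap forces $R^{-p}\ge 1+\sigma/\!\int_{\mathbb{S}^n}fh_T^p$ when $p>0$ and $R^{-p}\le 1-\sigma/\!\int_{\mathbb{S}^n}fh_T^p$ when $p<0$, and in both cases $R\le 1-\eta$ for a definite $\eta=\eta(\sigma,p)>0$. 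Hence $V_q(\Omega)^{1/q}\le|\mathbb{S}^n|^{1/q}\max_{\mathbb{S}^n}r_\Omega\le|\mathbb{S}^n|^{1/q}(1-\eta)$ uniformly over the boundary, whereas $V_q(T)^{1/q}\to 1$ since $\max_{\mathbb{S}^n}r_T=1$ (circumradius one) and $\{r_T>1-\epsilon\}$ has positive measure for every $\epsilon>0$; thus for $q$ large $\sup_{\partial\mathcal{K}_{p,f}(\delta,\mathcal{S}_T)}V_q<V_q(T)\le\sup_{\mathcal{K}_{p,f}(\delta,\mathcal{S}_T)}V_q$, so $\Omega^{(p,q)}\notin\partial\mathcal{N}_\delta(T)$.

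With $\Omega^{(p,q)}$ interior, the derivation that $h^{(p,q)}$ solves \eqref{e1.1} up to a constant is identical to the global case \eqref{e6.1} (and Remark \ref{r6.1} when $p>0$): the first variation along Wulff shapes bounds the associated Monge--Amp\`ere measure from above, polar duality bounds it from below, Caffarelli's theory \cite{caffarelli1990localization,caffarelli1990interior} yields strict convexity and $C^{1,\alpha}$ (and $C^{2,\alpha}$ when $f$ is H\"older) regularity, and the group-invariant orthogonality argument promotes the variational identity to the pointwise equation. For the limit, pass to a subsequence along which $\Omega^{(p,q)}\to\Omega^{(p,+\infty)}\in\mathcal{K}_{p,f}(\delta,\mathcal{S}_T)$; then $\max_{\mathbb{S}^n}r_{\Omega^{(p,+\infty)}}=\lim_q V_q(\Omega^{(p,q)})^{1/q}\ge\lim_q V_q(T)^{1/q}=1$ because $\Omega^{(p,q)}$ is maximizing, while the same scaling argument together with the gap form of Lemma \ref{l7.3} shows $\max_{\mathbb{S}^n}r_\Omega\le 1$ for all $\Omega\in\mathcal{K}_{p,f}(\delta,\mathcal{S}_T)$ with equality only at $T$; hence $\Omega^{(p,+\infty)}=T$, which is a regular polytope similar to the $T$ of Theorem \ref{t1.4}. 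I expect the main obstacle to be the uniform boundary gap estimate — converting the pointwise strict inequalities of Lemma \ref{l7.3} on the slice $\{\max h=1\}$ into the quantitative bound $\max_{\mathbb{S}^n}r_\Omega\le 1-\eta$ on the $p$-constraint surface, and then matching it against the (only subexponential) decay of $V_q(T)^{1/q}\to1$; the remaining ingredients are routine transcriptions of Sections 5 and 6.
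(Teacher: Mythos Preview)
Your proposal is correct and follows the same overall strategy that the paper indicates (the paper gives no explicit proof of Lemma~\ref{l7.4}, merely stating that one should ``modify the argument in Section~5 slightly'' after establishing Lemma~\ref{l7.3}). You carry out exactly this transcription: replace $p\to-\infty$ by $q\to+\infty$, replace the $\min h$ normalization by the $\max h$ normalization, and use Lemma~\ref{l7.3} in place of Lemma~\ref{l5.1}.

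There is one mild methodological difference worth recording. The paper's Section~5 template (and its repetition in Lemma~\ref{l7.2}) obtains the boundary gap by proving equi-continuity of the family $\{\mathcal{F}_{p,q}\}$ and uniform convergence to the limiting functional $\mathcal{F}^*_{p,+\infty}$, then invoking the strict local extremum of $T$ for the limit functional. Your argument is more direct: you rescale each boundary body to $\mathcal{K}_{max}(\mathcal{S}_T)$, use the quantitative (compactness) form of Lemma~\ref{l7.3} to force the circumradius bound $R\le 1-\eta$, and then compare $V_q(\Omega)^{1/q}\le|\mathbb{S}^n|^{1/q}(1-\eta)$ against $V_q(T)^{1/q}\to 1$. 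This bypasses the equi-continuity step entirely and yields the same conclusion; it is arguably cleaner. Your identification of the limit $\Omega^{(p,+\infty)}=T$ via the two-sided bound on $\max r$ is likewise correct and slightly sharper than the paper's abstract ``unique maximizer of the limiting functional'' formulation. The self-flagged ``main obstacle'' (upgrading the strict inequality of Lemma~\ref{l7.3} to a uniform gap on the annulus) is handled exactly as you say, by compactness of $\partial\mathcal{K}_{max}(\delta,\mathcal{S}_T)$.
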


\begin{proof}[Proof of (2) in Theorem \ref{t1.4}.] Part (2) of Theorem \ref{t1.4} is a direct consequence of Lemma \ref{l7.4}.
\end{proof}

\vspace{10pt}

\noindent {\bf Acknowledgement.} The authors would like to thank Xudong Wang for many
helpful comments.

\vspace{10pt}

\end{document}